\newcommand{\ssa}{\ensuremath{_{\alpha}}}
\newcommand{\ssb}{\ensuremath{_{\beta}}}
\newcommand{\ssg}{\ensuremath{_{\gamma}}}
\newcommand{\lla }{\ensuremath{\ell\ssa}}
\newcommand{\llb}{\ensuremath{\ell\ssb}}
\newcommand{\llg}{\ensuremath{\ell\ssg}}
\newcommand{\Tbar}{\ensuremath{\overline{\mathcal{T}}}}
\newcommand{\grad}{\operatorname{grad}}
\newcommand{\red}{\operatorname{red}}
\newcommand{\qedd}{\hfill \ensuremath{\Box}}
\newcommand{\mathC}{\mathbb{C}}
\newcommand{\mathH}{\mathbb{H}}
\newcommand{\mathR}{\mathbb{R}}
\newcommand{\mathZ}{\mathbb{Z}}
\newcommand{\caB}{\mathcal{B}}
\newcommand{\caC}{\mathcal{C}}
\newcommand{\caD}{\mathcal{D}}
\newcommand{\caT}{\mathcal{T}}
\newcommand{\caF}{\mathcal{F}}
\newcommand{\caH}{\mathcal{H}}
\newcommand{\caM}{\mathcal{M}}
\newtheorem{definition}{Definition}
\newtheorem{lemma}[definition]{Lemma}
\newtheorem{theorem}[definition]{Theorem}
\newtheorem{corollary}[definition]{Corollary}
\newtheorem{example}[definition]{Example}
\begin{document}

\title{Products of twists, geodesic-lengths and Thurston shears}         
\author{Scott A. Wolpert\footnote{Partially supported by National Science Foundation grant DMS - 1005852.}}        
\date{January 4, 2013}          
\maketitle

\begin{abstract}
Thurston introduced shear deformations (cataclysms) on geodesic laminations - deformations including left and right displacements along geodesics. For hyperbolic surfaces with cusps, we consider shear deformations on  disjoint unions of ideal geodesics.  The length of a balanced weighted sum of ideal geodesics is defined and the Weil-Petersson (WP) duality of shears and the defined length is established. The Poisson bracket of a pair of balanced weight systems on a set of disjoint ideal geodesics is given in terms of an elementary $2$-form.  The symplectic geometry of balanced weight systems on ideal geodesics is developed.  Equality of the Fock shear coordinate algebra and the WP Poisson algebra is established.  The formula for the WP Riemannian pairing of shears is also presented.
\end{abstract}  

\section{Introduction}

As a generalization of the Fenchel-Nielsen twist deformation for a simple closed curve, Thurston introduced earthquake deformations for measured geodesic laminations.  Later in his study of minimal stretch maps, Thurston generalized earthquakes to shears (cataclysms),  deformations incorporating left and right displacements \cite{Thurstre}.  Bonahon subsequently developed the fundamental theory of shear deformations in a sequence of papers \cite{Bonshear,BontHd,BonTran}.  At the same time, Penner developed a deformation theory of Riemann surfaces with cusps by considering shear deformations on disjoint ideal geodesics triangulating a surface \cite{Pendec,Pen,Penbk}.  More recently shear deformations play a basic role in the Fock-Goncharov work on the quantization of Teichm\"{u}ller space \cite{Fk,FkChk,FkGn} and in the Kahn-Markovic work on the Weil-Petersson Ehrenpreis conjecture \cite{KMwp}.

The Weil-Petersson (WP) geometry of Teichm\"{u}ller space is recognized as corresponding to the hyperbolic geometry of Riemann surfaces.  For example, twice the dual in the WP K\"{a}hler form of a Fenchel-Nielsen twist deformation is the differential of the associated geodesic-length function.  Also for example, the WP Riemannian pairing of twist deformations is given by a sum of lengths of orthogonal connecting geodesics, see Theorem \ref{gradpr} and \cite{Rier}. An infinitesimal shear on a disjoint union of ideal geodesics is specified by weights on the geodesics with vanishing sum of weights for the edges entering each cusp.  
We define the length of a balanced sum of ideal geodesics and find that twice the dual in the WP K\"{a}hler form of a shear is the differential of the defined length.  We then present the basic WP symplectic and Hamiltonian geometry in Section \ref{sympgeom} with Theorem \ref{twlth} and Corollaries \ref{commshear}, \ref{lpr} and \ref{hh}.  The results include new formulas for the K\"{a}hler form.  We show that the Poisson bracket of a pair of weight systems on a common set of triangulating ideal geodesics is given in terms of an elementary $2$-form computed from the weights alone.  In Section \ref{alge}, we use the elementary $2$-form to show in Theorem \ref{FkWP} that the Fock shear coordinate algebra introduced in the quantization of Teichm\"{u}ller space is the WP Poisson algebra.  The basic WP Riemannian geometry of shears is developed in Section \ref{riemmgeom} with Theorem \ref{shpr}.  We generalize Riera's WP inner product formula and show that the Riemannian pairing of two weight systems on ideal geodesics is given by the combination of an invariant of the geometry of ideal geodesics entering a cusp and a sum of lengths of orthogonal connecting geodesics.  

There are challenges in calculating shear deformations.  In contrast to earthquake deformations, shear deformations are in general not limits of Fenchel-Nielsen twists and a shear on a single geodesic deforms a complete hyperbolic structure to an incomplete structure.  For the deformation theory larger function spaces are involved; for earthquakes geodesic laminations carry transverse Borel measures and for shears geodesic laminations carry transverse H\"{o}lder distributions.  A general approach would require a deformation theory of incomplete hyperbolic structures.  Rather, we follow the approach of \cite{Wlext} and double a surface with cusps across cusps, and open cusps to collars to obtain approximating compact surfaces with reflection symmetries. Shears are then described as limits of opposing twists.  Given the above expectations, the approximating formulas include individual terms that diverge with the approximation.  The object is to show that diverging terms cancel and to calculate the remaining contributions. We use the Chatauby topology for representations to show that the hyperbolic structures converge and an analysis of holomorphic quadratic differentials to show that infinitesimal deformations converge.  

We begin considerations in Section \ref{glfs} with the variation of cross ratio and geodesic-length.  A unified treatment is given for Gardiner's geodesic-length formula \cite{Gardtheta}, Riera's twist Riemannian product formula \cite{Rier} and the original twist-length cosine formula \cite{Wlsymp}.  In Section \ref{Thurshear}, we review Bonahon's results on shears on compactly supported geodesic laminations and Penner's results on shears on ideal geodesics triangulating a surface with cusps.  The review includes the Thurston-Bonahon Theorem that shears on a maximal geodesic lamination are transitive on Teichm\"{u}ller space and Penner's Theorem on $\lambda$ and $h$ length global coordinate. We include the Bonahon-S\"{o}zen and Papadopoulos-Penner results that in appropriate settings the WP K\"{a}hler form is a multiple of the Thurston symplectic form.  In Sections \ref{Thuroppos} and \ref{Chat}, beginning with hyperbolic collars and cusps, we give the geometric description of shear deformations and describe the convergence of opposing twists to shears. In Section \ref{results}, we treat the convergence of infinitesimal opposing twists to infinitesimal shears.  The analysis includes the convergence of holomorphic quadratic differentials.  In Section \ref{sympgeom}, we define the length of a balanced sum of ideal geodesics and establish the basic symplectic geometry results in Theorem \ref{twlth} and the following corollaries.  In Corollary \ref{commshear}, we show that the Poisson bracket of length functions and the shear derivative of a length function are given by evaluation of the elementary $2$-form. We consider the Fock shear coordinate algebra in Section \ref{alge}. We use Penner's topological description of the shear coordinate bracket and compute with the elementary $2$-form to show that the algebra is the WP Poisson algebra.  In Section \ref{riemmgeom} we begin with expansions for gradient pairings for geodesics crossing short geodesics.  Then in Theorem \ref{shpr}, we provide the formula for the WP Riemannian pairing of balanced sums of ideal geodesics.  In Example \ref{Dedekind} we calculate the pairing for the Dedekind $PSL(2;\mathbb Z)$ tessellation to find an exact distance relation.  Finally in Section \ref{circ} we give the length parameter expansion for the sum of lengths of circuits about a closed geodesic.

It is my pleasure to thank Joergen Andersen, Robert Penner, Adam Ross and 
Dragomir \v{S}ari\'c for many helpful conversations and valuable suggestions. 

\section{Gradients of geodesic-lengths}\label{glfs}       

We begin with the basics of deformation theory of Riemann surfaces \cite{Ahqc,Hbbk,ImTan}.  A conformal structure is described by its uniformization.  An infinitesimal variation of a conformal structure is described by a variation of the identity map for the universal cover.  The interesting case for the present considerations is for a Riemann surface of finite type, a compact surface with a finite number of points removed, covered by the upper half plane $\mathbb H$.  For a vector field $v$ on the universal cover and parameter $\epsilon$, there is a variation of the identity map $w_{\epsilon}(z)=z+\epsilon v+o(\epsilon)$, for $z$, respectively $w$, conformal coordinates for the domain and range universal covers.  Provided the vector field is deck transformation group invariant, the map is equivariant with respect to deck transformation groups.  The range conformal structure is described by the angle measure $\arg(dw_{\epsilon})$ for the differential $dw_{\epsilon}=w_{\epsilon,z}dz+w_{\epsilon,\bar z}\overline{dz}$.   The expansion for the variation provides that 
$dw_{\epsilon}=w_{\epsilon,z}(dz\,+\,\epsilon v_{\bar z}\overline{dz})\,+\,o(\epsilon)$, and thus $\arg(dw_{\epsilon}) =\arg(w_{\epsilon,z})\,+\,\arg(dz+\epsilon v_{\bar z}\overline{dz})$.   The derivative of the vector field $v_{\bar z}$ describes the infinitesimal variation of the conformal structure.  The quantity $v_{\bar z}$ is an example of a Beltrami differential, a tensor of type $\frac{\partial}{\partial z}\otimes\overline{dz}$.

For a Riemann surface $R$ of finite type and vector field $v$ defined on the surface (equivalently on the universal cover and invariant by deck transformations), then $w_{\epsilon}(z)$ is a variation of the identity map of the surface and in effect describes a relabeling of the points of the surface - the deformation is trivial.  Nontrivial deformations are given by vector fields on the universal cover; vector fields with nontrivial group cocycles relative to the deck transformation group.  

We consider $B(\mathH)$, the space of Beltrami differentials on $\mathH$, bounded in $L^{\infty}$.  By potential theory considerations, for $\mu\in B(\mathH)$ there is a vector field $v$ on $\mathH$ with $v_{\bar z}=\mu$, that is actually continuous on $\overline \mathH$ and is bounded as $O(|z|\log|z|)$ at infinity \cite{AB}.  In particular elements of $B(\mathH)$ also describe variations of the points of $\mathR$.  We are interested in the corresponding variational formula.

The cross ratio of points of $\mathbb P^1$ is given as
\[
(p,q,r,s)\,=\,\frac{(p-r)(q-s)}{(p-s)(q-r)}
\]
and for $q=s+\Delta s$ and rearranging variables, we obtain a holomorphic $1$-form
\[
\Omega_{pq}(z)\,=\,\frac{(p-q)dz}{(z-p)(z-q)}\,=\,\frac{dz}{(z-p)}\,-\,\frac{dz}{(z-q)}.
\]
The cross ratio and $1$-form are invariant by the diagonal action of $PSL(2;\mathC)$ on all variables. 

There is a natural pairing of Beltrami differentials with $Q(\mathH)$, the space of integrable holomorphic quadratic differentials on $\mathH$,
\[
\quad(\mu,\psi)\,\rightarrow\,\int_{\mathH} \mu\psi \quad\mbox{ for }\mu\in B(\mathH)\mbox{ and } \psi\in Q(\mathH).
\]
Rational functions, holomorphic on $\mathH$, with at least three simple poles on $\mathR$ are example elements of $Q(\mathH)$.  The holomorphic quadratic differentials $Q(\mathH)$ describe cotangents of the deformation space of conformal structures.  The variational formula for points of $\mathR$ is fundamental.
\begin{theorem}\textup{Variation of the cross ratio \cite{Ahsome,Ahqc}.}\label{crvr} For $p,q,r,s\in\mathR$ the variational differential of the cross ratio is
\[
d\log(p,q,r,s)\,=\,-\frac{2}{\pi}\Omega_{pq}\Omega_{rs}\,\in Q(\mathH).
\]
\end{theorem}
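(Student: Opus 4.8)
I would obtain the differential by pairing with a Beltrami differential. For $\mu\in B(\mathH)$ let $v$ be an associated vector field on $\overline{\mathH}$ with $v_{\bar z}=\mu$, and set $w_\epsilon(z)=z+\epsilon v(z)+o(\epsilon)$; the content of the claim is that $\left.\dep\right|_{\epsilon=0}\log\bigl(w_\epsilon(p),w_\epsilon(q),w_\epsilon(r),w_\epsilon(s)\bigr)$ equals the pairing of $-\tfrac2\pi\Omega_{pq}\Omega_{rs}$ against $\mu$. Since the cross ratio is invariant under the diagonal $PSL(2;\mathC)$-action, this is insensitive to the ambiguity in $v$ (addition of an affine, hence M\"{o}bius, vector field), so I am free to normalize $v$ to be the variation of the quasiconformal flow of the reflection-symmetric extension of $\mu$ across $\mathR$, whose flow preserves $\mathR$. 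The elementary first variation is then
\[
\left.\dep\right|_{\epsilon=0}\log\bigl(w_\epsilon(p),w_\epsilon(q),w_\epsilon(r),w_\epsilon(s)\bigr)=\sum_{x\in\{p,q,r,s\}}v(x)\,\frac{\partial}{\partial x}\log(p,q,r,s).
\]

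The key observation is that this is exactly the sum of residues of the meromorphic $1$-form $v\,\Omega_{pq}\Omega_{rs}$ at its simple poles $p,q,r,s\in\mathR$: from the partial-fractions identity
\[
\Omega_{pq}(z)\,\Omega_{rs}(z)=\left(\frac{1}{z-p}-\frac{1}{z-q}\right)\left(\frac{1}{z-r}-\frac{1}{z-s}\right)(dz)^2
\]
one computes $\operatorname{Res}_{z=p}\bigl(v\,\Omega_{pq}\Omega_{rs}\bigr)=v(p)\,\partial_p\log(p,q,r,s)$, and likewise at $q,r,s$. I would then integrate $v\,\Omega_{pq}\Omega_{rs}$ over $\partial D$, with $D$ a large disk from which small disks about $p,q,r,s$ have been excised, and let the radii tend to $\infty$, respectively $0$. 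The outer-circle term vanishes because $\Omega_{pq}\Omega_{rs}=O(|z|^{-4})$ overwhelms the $O(|z|\log|z|)$ growth of $v$; Stokes' theorem rewrites $\int_{\partial D}$ as $\int_{D}d(v\,\Omega_{pq}\Omega_{rs})=\int_{D}\mu\,\Omega_{pq}\Omega_{rs}\,d\bar z\wedge dz$ since $\Omega_{pq}\Omega_{rs}$ is holomorphic off $\mathR$; and the inner circles contribute the residues. This gives $\sum\operatorname{Res}=-\tfrac1\pi\int_{\mathC}\mu\,\Omega_{pq}\Omega_{rs}\,dA$. Splitting the plane into its two halves and using that, for real $p,q,r,s$, the lower contribution is the complex conjugate of the upper one, the right side becomes $-\tfrac2\pi\operatorname{Re}\int_{\mathH}\mu\,\Omega_{pq}\Omega_{rs}$; under the pairing $\mu\mapsto\operatorname{Re}\int_{\mathH}\mu\psi$ that identifies cotangents with $Q(\mathH)$ this reads $d\log(p,q,r,s)=-\tfrac2\pi\Omega_{pq}\Omega_{rs}$, which plainly lies in $Q(\mathH)$ as its poles are simple and on $\mathR$.

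The main obstacle is analytic bookkeeping rather than algebra. One must justify the three limiting steps: vanishing of the outer-circle integral, for which the precise $O(|z|\log|z|)$ bound on $v$ furnished by the cited Ahlfors--Bers theory is exactly what is needed; convergence of $\int_{D}$ to $\int_{\mathC}$ as the punctures shrink, using continuity of $v$ up to $\mathR$ to make sense of $v(p),\dots,v(s)$ and to control the inner arcs; and the orientation and $2i$ factors in Stokes' theorem. One should also note that the reflection-symmetric extension is the normalization appropriate to the Teichm\"{u}ller deformation at hand, which is the source of the factor $2$. A parallel route substitutes the Cauchy--Green representation of each boundary value $v(p),\dots,v(s)$ directly into the first-variation formula; there the four individually divergent Cauchy integrals must be assembled before integrating, since only the combined kernel $\Omega_{pq}\Omega_{rs}$ is integrable against the bounded density $\mu$. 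Everything else is routine.
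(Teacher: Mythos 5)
Your proposal is correct, and it is essentially the argument the theorem rests on: the paper itself gives no proof of Theorem \ref{crvr}, quoting it from Ahlfors \cite{Ahsome,Ahqc}, and what you have written is a faithful reconstruction of that standard Ahlfors--Bers variational computation. The two points that actually carry the content are both handled properly: the residue identity $\operatorname{Res}_{z=p}\bigl(v\,\Omega_{pq}\Omega_{rs}\bigr)=v(p)\,\partial_p\log(p,q,r,s)$ (and likewise at $q,r,s$), combined with the Stokes/Pompeiu argument on a large disk with the four points excised, where the $O(|z|\log|z|)$ bound on $v$ against the $O(|z|^{-4})$ decay of $\Omega_{pq}\Omega_{rs}$ kills the outer boundary term and the simple poles keep $\mu\,\Omega_{pq}\Omega_{rs}$ locally integrable; and the normalization by the reflection-symmetric extension of $\mu$, which is exactly what makes the deformation preserve $\mathbb R$ (so that "variation of points of $\mathbb R$" is meaningful) and is the source of both the factor $2$ and the implicit real part. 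Your final form $-\tfrac2\pi\Re\int_{\mathH}\mu\,\Omega_{pq}\Omega_{rs}$ matches precisely the convention the paper uses when it invokes Theorem \ref{crvr} in the proof of Gardiner's formula (\ref{unfolded}), so the constant and sign are consistent with the rest of the text; the remaining items you flag (differentiability of $\epsilon\mapsto w_\epsilon$ at boundary points, Stokes with $v$ only distributionally differentiable) are indeed the standard analytic inputs from the cited references.
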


The quadratic differentials $Q(\mathH)$ form a pre-inner product space with a densely defined Hermitian pairing
\[
\langle\phi,\psi\rangle\,=\,\int_{\mathH}\phi\bar\psi\,(ds^2)^{-1}\quad\mbox{ for }\phi,\psi\in Q(\mathH)\cap L^2
\]
and $ds^2$ the hyperbolic metric.  The pairing is the Weil-Petersson pre-inner product \cite{Ahsome,Wlcbms}.  The pairing provides formal dual tangent vectors for the differentials of cross ratios
\[
\grad\log(p,q,r,s)\,=\,\overline{(d\log(p,q,r,s))}(ds^2)^{-1}.
\]
We are interested for distinct quadruples $\mathcal P=(p_1,p_2,r_1,r_2),\ \mathcal F=(f_1,f_2,g_1,g_2)$ in the pairing
\[
\langle\grad\log\mathcal P,\grad\log\mathcal F\rangle.
\]
The pairing is continuous in the quadruples for all points distinct and also is continuous for $(r_1,r_2)$ tending to $(p_1,p_2)$ and $(g_1,g_2)$ tending to $(f_1,f_2)$.  We will evaluate  particular configurations for the pairing. 

Let $\caT$ be the Teichm\"{u}ller space of homotopy marked genus $g$, $n$ punctured Riemann surfaces $R$ of negative Euler characteristic.  We are interested in pairings corresponding to geometric constructions of deformations.  A point of $\caT$ is the equivalence class of a pair $(R,f)$ with $f$ a homeomorphism from a reference topological surface $F$ to $R$.  By the Uniformization Theorem a conformal structure determines a unique complete compatible hyperbolic metric $ds^2$ for $R$ and a deck transformation group $\Gamma\subset PSL(2;\mathbb R)$ with $R=\mathH/\Gamma$.  The Teichm\"{u}ller space is a complex manifold with cotangent space at $R$ represented by $Q(R)$, the space of holomorphic quadratic differentials on $R$ with at most simple poles at punctures.

The pairing 
\[
\quad(\mu,\psi)\,\rightarrow\,\int_R\mu\psi\quad\mbox{for } \mu\in B(R)\mbox{ and } \psi\in Q(R)
\]
is the ingredient for Serre duality and consequently the tangent space of $\caT$ at $R$ is $B(R)/Q(R)^{\perp}$ \cite{Ahsome,Ahqc,Cmbbk,Hbbk,ImTan}.  The $L^2$ Hermitian pairing
\[
\langle\phi,\psi\rangle\,=\,\int_R\phi\bar\psi\,(ds^2)^{-1}
\]
is the Weil-Petersson (WP) cometric for $Q(R)$.  The metric dual mapping
\[
\phi\,\rightarrow\,\bar\phi(ds^2)^{-1}\,\in Q(R)
\]
is a complex anti linear isomorphism, since Beltrami differentials of the given form (harmonic differentials) give a direct summand of $Q(R)^{\perp}$ in $B(R)$.  The metric dual mapping associates a tangent vector to a cotangent vector and so defines the WP K\"{a}hler metric on the tangent spaces of $\caT$; the mapping is the Hermitian metric gradient.  

Geodesic-lengths and Fenchel-Nielsen twist deformations are geometric quantities for pairings.  Associated to a nontrivial, non peripheral free homotopy class $\alpha$ on the reference surface $F$ is the length $\lla(R)$ of the unique geodesic in the free homotopy class for $R$.  Geodesic-length is given as $2\cosh\lla/2\,=\,\operatorname{tr}A$ for $\alpha$ corresponding to the conjugacy class of $A\in\Gamma$ in the deck transformation group.   Geodesic-lengths are functions on Teichm\"{u}ller space with a direct relationship to WP geometry.  A Fenchel-Nielsen twist deformation is also associated to a closed simple geodesic.  The deformation is given by cutting the surface along the geodesic $\alpha$ to form two metric circle boundaries, which then are identified by a relative rotation to form a new hyperbolic surface.  A flow on $\caT$ is defined by considering the family of surfaces $\{R_t\}$ for which at time $t$ reference points from sides of the original geodesic are relatively displaced by $t$ units to the right on the deformed surface.  The infinitesimal generator the Fenchel-Nielsen vector field $t_{\alpha}$, the differential of the geodesic-length and the gradient of geodesic-length satisfy duality relations
\begin{equation}\label{wpdual}
2\omega_{WP}(\ ,t_{\alpha})\,=\,d\lla\quad\mbox{and equivalently}\quad 2t_{\alpha}\,=\,J\grad\lla,
\end{equation}
for $\omega_{WP}$ the WP K\"{a}hler form and $J$ the complex structure of $\caT$ (multiplication by $i$ on $B(R)/Q(Q)^{\perp}$) \cite{WlFN,Wlcbms}.  The factor of $2$ adjustment to our formulas as detailed in \cite[\S 5]{Wlcusps} is included. 

We are interested in the WP metric and Lie pairings of the infinitesimal deformations $\grad\lla$ and $t_{\alpha}$ with geodesic-length functions $\llb$.  The formulas begin with Gardiner's calculation of the differential of geodesic-length.  We now use a single simplified approach that provides Gardiner's $d\lla$ formula \cite{Gardtheta}, the cosine formula for $t_{\alpha}\llb$ \cite{Wlsymp,Wlcbms}, the sine-length formula for $t_{\alpha}t_{\beta}\llg$ \cite{Wlsymp,Wlcbms}, as well as Riera's length-length formula for $\langle\grad\lla,\grad\llb\rangle$ \cite{Rier,Wlcbms}.  The approach combines Theorem \ref{crvr}, coset decompositions for the uniformization group and calculus calculations.  An important step is identifying a telescoping sum corresponding to a cyclic group action.  We present the approach.

\begin{theorem}\textup{Gardiner's variational formula \cite{Gardtheta}.}\label{Gardtheta} For a closed geodesic $\alpha$,
\[
d\lla\,=\,\frac{2}{\pi}\sum_{C\in\langle A\rangle\backslash\Gamma}\Omega_{r_A a_A}^2(Cz)\, \in Q(R)
\]
with $\alpha$ corresponding to the conjugacy class of $A\in\Gamma$ with repelling fixed point $r_A$ and attracting fixed point $a_A$.  
\end{theorem}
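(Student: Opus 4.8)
The plan is to write $\lla$ directly as a cross ratio of boundary points determined by $A$, differentiate that formula by Theorem \ref{crvr}, and then recognize the resulting cotangent vector as the Poincar\'e series in the statement by carrying out a telescoping sum over the cyclic group $\langle A\rangle$.

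First I would record the elementary identity that for any $p\in\mathR$ which is not a fixed point of $A$,
\[
\lla\;=\;-\log(p,\,Ap,\,r_A,\,a_A),
\]
independently of $p$: normalizing so that $r_A=0$, $a_A=\infty$ and $A$ acts as $z\mapsto e^{\lla}z$ (using $2\cosh(\lla/2)=\operatorname{tr}A$), the cross ratio equals $e^{-\lla}$. The point of this identity is its naturality under deformation. If $w_\epsilon$ is the variation associated to a $\Gamma$-invariant $\mu\in B(\mathH)$, then $\Gamma$ is carried to $w_\epsilon\Gamma w_\epsilon^{-1}$, the fixed points of $A$ to $w_\epsilon(r_A),w_\epsilon(a_A)$, and the conjugate of $A$ sends $w_\epsilon(p)$ to $w_\epsilon(Ap)$; hence $\lla(R_\epsilon)$ equals $-\log$ of the $w_\epsilon$-transport of the fixed quadruple $(p,Ap,r_A,a_A)$. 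Differentiating at $\epsilon=0$ and invoking Theorem \ref{crvr} gives, for every $\Gamma$-invariant $\mu$,
\[
d\lla(\mu)\;=\;\frac{2}{\pi}\int_{\mathH}\mu\;\Omega_{p,Ap}\,\Omega_{r_A a_A},
\]
the integral being finite because $\Omega_{p,Ap}\Omega_{r_A a_A}$ is an integrable, simple-pole element of $Q(\mathH)$.

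The remaining work is to convert this $Q(\mathH)$ representative into the $\Gamma$-invariant element of $Q(R)$ in the statement. Writing $\phi=\Omega_{p,Ap}\Omega_{r_A a_A}$ and using $\Gamma$-invariance of $\mu$, the standard unfolding gives $\int_{\mathH}\mu\,\phi=\int_{R}\mu\sum_{C\in\Gamma}C^{*}\phi$, where the Poincar\'e series converges absolutely in $L^1(R)$ since $\int_{R}\sum_{C\in\Gamma}|C^{*}\phi|=\int_{\mathH}|\phi|<\infty$. Next I would decompose $\Gamma=\bigsqcup_{C\in\langle A\rangle\backslash\Gamma}\langle A\rangle C$ and compute the inner sum over $\langle A\rangle$. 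Because $A$ fixes $r_A$ and $a_A$ one has $(A^{k})^{*}\Omega_{r_A a_A}=\Omega_{r_A a_A}$, while $PSL(2;\mathC)$-invariance of $\Omega_{pq}$ gives $(A^{k})^{*}\Omega_{p,Ap}=\Omega_{A^{-k}p,\,A^{1-k}p}$; summing the latter over $k\in\mathZ$ telescopes to $\Omega_{r_A a_A}$ (the partial sums are $\Omega_{A^{-N}p,\,A^{N}p}$, and $A^{N}p\to a_A$, $A^{-N}p\to r_A$). Hence $\sum_{k\in\mathZ}(A^{k})^{*}\phi=\Omega_{r_A a_A}^{2}$, and regrouping the absolutely convergent sum over $\Gamma$ by $\langle A\rangle$-cosets produces exactly $\sum_{C\in\langle A\rangle\backslash\Gamma}\Omega_{r_A a_A}^{2}(Cz)$. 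Finally, this Poincar\'e series, being $\langle A\rangle$-invariant and $L^1$ over the annulus $\langle A\rangle\backslash\mathH$, represents an $L^1$ holomorphic $\Gamma$-invariant quadratic differential, hence lies in $Q(R)$.

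There is no deep obstacle here: the one idea is the telescoping that collapses $\sum_{k\in\mathZ}(A^{k})^{*}\Omega_{p,Ap}$ to $\Omega_{r_A a_A}$ (the ``telescoping sum corresponding to a cyclic group action''), and the rest is bookkeeping. The two points requiring care are fixing the orientation of the cross-ratio identity so that the sign and the constant $2/\pi$ emerge as stated, and justifying the interchanges of summation and integration — absolute convergence of the Poincar\'e series — that license both the unfolding over $\Gamma$ and its regrouping by cosets of $\langle A\rangle$ before the telescoping.
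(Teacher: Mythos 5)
Your proposal is correct and follows essentially the same route as the paper: write $\lla$ as a logarithm of a cross ratio of $(p,Ap,r_A,a_A)$, differentiate via Theorem \ref{crvr}, unfold the $L^1$ integrand over $\Gamma$, and collapse the $\langle A\rangle$-sum by the telescoping of $\sum_k (A^k)^*\Omega_{p,Ap}$ to $\Omega_{r_Aa_A}$. The only blemishes are cosmetic (the partial sum is $\Omega_{A^{-N}p,\,A^{N+1}p}$ rather than $\Omega_{A^{-N}p,\,A^{N}p}$, and the paper's pairing convention carries a real part), neither of which affects the argument.
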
 
\begin{proof}
We begin with the geodesic-length.  For a hyperbolic transformation $A$, the geodesic-length is $\log(As,s,r_A,a_A)$ for $s$ a point of $\mathR$ distinct from the fixed points.  We begin with the variational formula for the cross ratio from Theorem \ref{crvr}.  The resulting integrand is in $L^1(\mathH)$ and $\mathH$ is the disjoint union 
\[
\bigcup_{n\in\mathZ}\,\bigcup_{C\in\langle A\rangle\backslash\Gamma}A^nC(\mathcal F)
\]
for $\mathcal F$ a $\Gamma$ fundamental domain. By a change of variables the union over domains is replaced by a sum of integrands
\begin{equation}\label{unfolded}
d\lla[\mu]\,=\,-\frac{2}{\pi}\Re\int_{\mathcal F}\mu\sum_n\sum_{C\in\langle A\rangle\backslash\Gamma}\Omega_{As\,s}(A^nCz)\Omega_{r_Aa_A}(A^nCz).
\end{equation}
The invariance of $\Omega$ by the diagonal $PSL(2;\mathR)$ action gives $\Omega_{pq}(A^nw)\,=\,\Omega_{A^{-n}pA^{-n}q}(w)$ and the given product of forms is
\[
\Omega_{A^{-n+1}sA^{-n}s}(Cz)\,\Omega_{r_Aa_A}(Cz).
\]
Using the $\Omega$ partial fraction expansion, the first factor is
\[
\Omega_{A^{-n+1}sA^{-n}s}\,=\,\frac{dw}{(w-A^{-n+1}s)}\,-\,\frac{dw}{(w-A^{-n}s)}
\]
and the integer sum telescopes
\[
\sum_{n=-N}^N\Omega_{A^{-n+1}sA^{-n}s}\,=\,\Omega_{A^{N+1}sA^{-N}s}
\]
and as $N$ tends to infinity, $A^{N+1}s$ tends to $a_A$ and $A^{-N}s$ tends to $r_A$.  (Various forms of the telescoping appear in the calculations for the cosine formula \cite[pgs. 220-221]{Wlsymp}, the sine-length formula \cite[pgs. 223-224]{Wlsymp} and the length-length formula \cite[pgs. 113-114]{Rier}.)  The sum in (\ref{unfolded}) now becomes the desired sum 
\[
-\sum_{C\in\langle A\rangle\backslash\Gamma}\Omega_{r_A a_A}^2(Cz).
\]
\end{proof}

We consider the WP Hermitian pairing of gradients $\langle\grad\lla,\grad\llb\rangle$.  By (\ref{wpdual}) the imaginary part of the pairing is
\[
\Re\langle J\grad\lla,\grad\llb\rangle\,=\,2t_{\alpha}\llb\,=\,2\sum_{p\in\alpha\cap\beta}\cos\theta_p.
\]
The real part of the pairing $\langle\grad\lla,\grad\llb\rangle$ was first evaluated by Riera \cite{Rier}.  We now apply the above approach and with a single simpler treatment derive the real and imaginary part formulas.  Riera's formula involves the logarithmic function
\[
R(u)\,=\,u\log\Big|\frac{u+1}{u-1}\Big|\,-\,2.
\]
The function is even with a logarithmic singularity at $\pm 1$ and with the expansion
\[
R(u)\,=\,2\,(\frac{1}{3u^2}\,+\,\frac{1}{5u^4}\,+\,\frac{1}{7u^6}\,+\,\cdots)\quad\mbox{for }|u|>1.
\]
In particular for $u>1$, the function and its even derivatives are positive and 
the function is $O(u^{-2})$ for $u>1$.  The function $R(u)$ is also given as
\[
\frac{u}{2}\tanh^{-1} \frac{1}{u}\,-\,2\quad\mbox{for }|u|>1\quad\mbox{and }\quad
\frac{u}{2}\tanh^{-1} u\,-\,2\quad\mbox{for }|u|<1.
\]
We present the pairing formula for the general case of a cofinite group possibly with parabolic and elliptic elements.  

\begin{theorem}\textup{The complex gradient pairing \cite{Wlsymp,Rier}.}\label{gradpr} For closed primitive geodesics $\alpha,\beta$ corresponding to elements $A,B\in\Gamma$, we have for the WP pairing
\[
\langle\grad\lla,\grad\llb\rangle\,=\,\frac{2}{\pi} \delta_{\alpha\beta}e(A)\lla\,+\,\sum_{D\in\langle A\rangle\backslash\Gamma\slash\langle B\rangle} \mathcal R_D,
\]
where $\delta_{\alpha\beta}$ is the Kronecker delta for the geodesic pair, where $e(A)$ is $2$ in the special case of the axis of $A$ having order-two elliptic fixed points and is $1$ otherwise, 
where for the axes $\operatorname{axis}(A),\operatorname{axis}(DBD^{-1})$ disjoint in $\mathH$, then
\[
\mathcal R_D\,=\,\frac{2}{\pi}R(\cosh d(\operatorname{axis}(A),\operatorname{axis}(DBD^{-1})))
\]
and for the axes intersecting with angle $\theta_D$, then
\[
\mathcal R_D\,=\,\frac{2}{\pi}R(\cos\theta_D)\,-\,2i\cos\theta_D.
\]
 Twist-length duality and $J$ an isometry provide that $4\langle t_{\alpha},t_{\beta}\rangle\,=\,\langle\grad\lla,\grad\llb\rangle$.  
\end{theorem}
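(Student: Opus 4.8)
The plan is to follow the telescoping strategy of Theorem \ref{Gardtheta}, but now applied to the product $d\lla[\mu] \cdot \overline{d\llb}$ assembled from the two cross-ratio formulas, integrated against the hyperbolic cometric. First I would write $\lla = \log(As,s,r_A,a_A)$ and $\llb = \log(Bt,t,r_B,a_B)$, apply Theorem \ref{crvr} to each, and form $\langle\grad\lla,\grad\llb\rangle = \frac{4}{\pi^2}\int_{\mathH}\Omega_{As\,s}\Omega_{r_Aa_A}\cdot\overline{\Omega_{Bt\,t}\Omega_{r_Ba_B}}\,(ds^2)^{-1}$, then unfold over the double coset decomposition $\mathH = \bigcup_{m,n}\bigcup_{D\in\langle A\rangle\backslash\Gamma\slash\langle B\rangle}A^m D B^n(\caF)$. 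As in the previous theorem, the invariance of $\Omega$ under $PSL(2;\mathR)$ lets me absorb the $A^m$ on the left and $B^n$ on the right, producing (for fixed $D$) an $m$-telescoping of $\Omega_{A^{-m+1}s\,A^{-m}s}$ and an $n$-telescoping of $\Omega_{B^{-n+1}t\,B^{-n}t}$; letting $N\to\infty$ collapses each to a single $\Omega$ between the appropriate fixed points, and the $s,t$ dependence drops out. This reduces the pairing to $\frac{2}{\pi}\delta_{\alpha\beta}$(a self-term) $+\sum_{D}\frac{4}{\pi^2}\int_{\caF_{D}}\Omega_{r_Aa_A}^2\,\overline{\Omega_{D r_B\,D a_B}^2}\,(ds^2)^{-1}$, where the domain of integration accumulates to an $\operatorname{axis}(A)$-invariant strip, i.e. an integral over $\langle A\rangle\backslash\mathH$ of a product of two squared one-forms.

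Next I would evaluate the individual double-coset integral $\mathcal R_D$. Normalize by a Möbius transformation so that $\operatorname{axis}(A)$ is the imaginary axis, i.e. $r_A=0$, $a_A=\infty$, so $\Omega_{r_Aa_A}=dz/z$ and $(ds^2)^{-1}=y^2|dz|^{-2}$ in $z=x+iy$; then $\Omega_{r_Aa_A}^2(ds^2)^{-1/\cdots}$ contributes $y^2/|z|^2$ times $dx\,dy$ after pairing with $\overline{\Omega_{Dr_B\,Da_B}^2}$. Passing to the quotient $\langle A\rangle\backslash\mathH$ means integrating $z$ over an annular fundamental domain $1\le |z|\le e^{\lla}$; in polar coordinates $z=\rho e^{i\vartheta}$ this becomes $\int_0^{\lla}\!\!\int_0^{\pi}(\cdots)\,d\vartheta\,d(\log\rho)$, and the $\log\rho$-integral of the constant length element simply produces the factor $\lla$ seen in the self-term, while for $D$ not in $\langle A\rangle$ the remaining integral is a classical one whose value is exactly the function $R$. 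Concretely, writing $Dr_B = p$, $Da_B = q$ (two more points of $\partial\mathH$) and using $\Omega_{pq}=\frac{(p-q)\,dz}{(z-p)(z-q)}$, the integral $\int_{\langle A\rangle\backslash\mathH}\frac{y^2}{|z|^2}\cdot\frac{|p-q|^2}{|z-p|^2|z-q|^2}\,dx\,dy$ is a standard residue/contour computation; the cross-ratio-invariant combination of the four points $0,\infty,p,q$ that emerges is precisely $\cosh d(\operatorname{axis}(A),\operatorname{axis}(DBD^{-1}))$ in the disjoint case and $\cos\theta_D$ in the crossing case, and evaluating the integral yields $\frac{2}{\pi}R(\cdot)$. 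The imaginary part $-2i\cos\theta_D$ in the crossing case comes from the branch of the logarithm / the orientation of the intersection, matching the known $2t_\alpha\llb=2\sum_{p}\cos\theta_p$ via $\Re\langle J\grad\lla,\grad\llb\rangle$.

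For the self-term I would isolate the double cosets $D\in\langle A\rangle$ (only possible when $\alpha=\beta$, i.e. $B$ conjugate to $A$): there $\operatorname{axis}(DBD^{-1})=\operatorname{axis}(A)$, the naive formula $R(\cosh d)=R(1)$ diverges, but this divergence is an artifact of the unfolding — the correct finite contribution is obtained by integrating $(dz/z)^2\overline{(dz/z)^2}(ds^2)^{-1}=y^2/|z|^4\,dx\,dy$ over $\langle A\rangle\backslash\mathH$, giving $\frac{2}{\pi}\lla$, with the extra factor $e(A)=2$ appearing exactly when $\operatorname{axis}(A)$ carries order-two elliptic points (so that the primitive element of the stabilizer of the axis is a half-turn and the fundamental annulus is doubled). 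Finally, the last sentence of the theorem — $4\langle t_\alpha,t_\beta\rangle=\langle\grad\lla,\grad\llb\rangle$ — is immediate: by \eqref{wpdual} $2t_\alpha=J\grad\lla$, and $J$ is a WP isometry, so $4\langle t_\alpha,t_\beta\rangle=\langle J\grad\lla,J\grad\llb\rangle=\langle\grad\lla,\grad\llb\rangle$.

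The main obstacle will be the explicit evaluation of the single double-coset integral and the verification that the geometric quantity produced is $\cosh d$ (resp. $\cos\theta_D$) rather than some other function of the four fixed points — this requires a careful residue computation in the annular fundamental domain and recognizing $R$ in the resulting closed form — together with the bookkeeping showing that all the $s,t$-dependent boundary contributions from the two telescoping sums genuinely cancel and that the doubly-infinite sum/integral interchange is justified by the $L^1$ and $L^2$ bounds coming from $\Omega_{r_Aa_A}\in Q(\mathH)$ and the decay $R(u)=O(u^{-2})$.
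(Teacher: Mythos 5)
Your overall strategy (cross--ratio variation, coset decomposition, telescoping, then an explicit model integral) is the paper's, but two steps as you describe them do not work. First, the opening identity is not the WP pairing: the pairing is $\int_{\caF} d\lla\,\overline{d\llb}\,(ds^2)^{-1}$ with \emph{both} factors the full invariant Poincar\'{e} series, and unfolding lets you replace exactly one factor by its seed while extending the integral to $\mathH$; your expression $\frac{4}{\pi^2}\int_{\mathH}\Omega_{As\,s}\Omega_{r_Aa_A}\,\overline{\Omega_{Bt\,t}\Omega_{r_Ba_B}}\,(ds^2)^{-1}$ has dropped the group sum on one side. The attempted repair --- decompose $\mathH=\bigcup_{m,n,D}A^mDB^n(\caF)$ and telescope in $m$ and $n$ separately for fixed $D$ --- fails because both seeds are evaluated at the same point: after any single change of variables, moving $A^m$ onto the $A$-seed leaves the $B$-seed with points $B^{-n}D^{-1}A^{-m}Bt$, $B^{-n}D^{-1}A^{-m}t$, etc., so each index sum is a sum of \emph{products} whose other factor still varies with that index, and nothing telescopes. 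The paper's proof is organized precisely to avoid this: the $A$-direction telescoping is done once and for all in Theorem \ref{Gardtheta} (against an invariant Beltrami differential), and in the pairing one keeps the $\langle A\rangle$-invariant series $\sum_{C\in\langle A\rangle\backslash\Gamma}\Omega^2_{r_Aa_A}(Cz)$ intact, unfolds only the raw seed $\Omega_{Bs\,s}\Omega_{r_Ba_B}$ over $\Gamma$, writes $C=DB^n$ and substitutes $w=B^nz$; then the factor $\Omega^2_{r_Aa_A}(Dw)$ is independent of $n$ and only the $B$-factor telescopes, to $-\Omega^2_{r_Ba_B}$. Moreover the diagonal term $\frac{2}{\pi}\delta_{\alpha\beta}e(A)\lla$ comes out of an analysis of when the presentation $A^mDB^n$ is non-unique (exactly when $A=DB^{\pm1}D^{-1}$, with the order-two elliptic case giving $e(A)=2$); you assert this term, but within your double-telescoping framework there is no mechanism that isolates it.

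Second, the domain bookkeeping for the off-diagonal terms is wrong. In the correct reduction, formula (\ref{genform}), each double coset contributes an integral of $\Omega^2_{r_{D^{-1}AD}a_{D^{-1}AD}}\,\overline{\Omega^2_{r_Ba_B}}\,(ds^2)^{-1}$ over \emph{all} of $\mathH$ --- this is (\ref{mainint}), whose value is $\frac{\pi}{2}R(u)$ plus the imaginary term when the axes cross. Your version places these cross terms over the annulus $\langle A\rangle\backslash\mathH$; per double coset that undercounts (one would have to sum over the entire $\langle A\rangle$-orbit of $D(\operatorname{axis}B)$, all at the same distance from $\operatorname{axis}(A)$, before $R(\cosh d)$ can emerge), so the claimed ``classical annulus integral $=\frac{2}{\pi}R$'' is not correct as stated. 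Only the diagonal term is naturally a quotient integral, and your evaluation of it does give $\frac{2}{\pi}\lla$ (the paper instead gets it from the leftover un-telescoped factor, $\int_{\mathH}\Omega^2_{r_Ba_B}\overline{\Omega_{Bs\,s}\Omega_{r_Ba_B}}(ds^2)^{-1}=\frac{\pi}{2}\log(Bs/s)$). A smaller point: the cross-term integrand is $\Omega^2_{0\infty}\overline{\Omega^2_{pq}}(ds^2)^{-1}$, not the absolute-value expression you wrote; the phases are essential, since $\mathcal R_D$ is complex for crossing axes and $R(\cos\theta_D)$ can be negative, and the paper extracts them by an explicit antiderivative and a careful choice of branch of the logarithm rather than by a positivity argument. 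Your final sentence, $4\langle t_{\alpha},t_{\beta}\rangle=\langle\grad\lla,\grad\llb\rangle$ from (\ref{wpdual}) and $J$ an isometry, is fine.
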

\begin{proof}
For A a hyperbolic element we write
\[
\Theta_A\,=\,\sum_{C\in\langle A\rangle\backslash\Gamma}\Omega_{r_Aa_A}^2
\]
and from Gardiner's formula $d\lla=(2/\pi)\Theta_A$ with
\[
\langle\Theta_A,\Theta_B\rangle\,=\,-\int_{\mathH}\,\sum_{C\in\langle A\rangle\backslash\Gamma}\Omega_{r_Aa_A}^2(Cz)\,\overline{\Omega_{Bs\,s}(z)}\,\overline{\Omega_{r_Ba_B}(z)}\,(ds^2)^{-1}.
\]
We first decompose each left coset $\langle A\rangle\backslash\Gamma$ by considering right $\langle B\rangle$ cosets and then move the $\langle B\rangle$ action to the two conjugate forms.  The resulting sum over $\langle B\rangle$ is telescoping.  In particular, we enumerate the cosets of the sum by writing for $C\in\langle A\rangle\backslash\Gamma$ the decomposition $C=DB^n,\,D\in\langle A\rangle\backslash\Gamma\slash\langle B\rangle$ for $n\in\mathbb Z$.  For $A,B$ primitive hyperbolic elements, we consider uniqueness of the presentation of an element of $\langle A\rangle D$ in the form $A^mDB^n$. A non unique presentation is equivalent to a solution of $A^a=DB^bD^{-1}$  for a non trivial integer pair $(a,b)$.  Since $A,B$ each generate $\Gamma$ maximal cyclic subgroups, a non trivial solution of $A^a=DB^bD^{-1}$ provides that $A$ is conjugate to $B^{\pm1}$ by the element $D$.  In particular the presentation $A^mDB^n$ is unique except for the case $\alpha=\beta$ with $A=DB^{\pm1}D^{-1}$.  In the case $\alpha=\beta$ we select the element $A$ to represent the geodesic and the presentation is unique except for the case of $D$ either the identity or the special case of $\Gamma$ containing an order-two elliptic $E$ with $A=EA^{-1}E$.  For the special cases there is no distinction between left and right 
$\langle A\rangle$ cosets; we only use left cosets.  The special left cosets are for the identity element and the element $E$.  

Now for each resulting integral of the sum, change variable by writing $w=B^nz$; the effect is to move a $B^{-n}$ action to the variable of $\Omega_{Bs\,s}\Omega_{r_Ba_B}$.  Using the diagonal $PSL(2;\mathR)$ invariance of $\Omega$, the $B^{-n}$ action is moved to the quadruple of points, resulting in the telescoping sum
\[
\sum_{n\in\mathbb Z}\,\Omega_{B^{n+1}sB^ns}\Omega_{r_Ba_B}\,=\,-\Omega_{r_Ba_B}^2.
\]
The result is the general formula
\begin{multline}\label{genform}
\langle\Theta_A,\Theta_B\rangle\,=\,-\delta_{AB^{\pm 1}}e(A)\int_{\mathH}\Omega_{r_Ba_B}^2\,\overline{\Omega_{Bs\,s}}\,\overline{\Omega_{r_Ba_B}}\,(ds^2)^{-1}\,+\\ \sum_{D\in\langle A\rangle\backslash\Gamma\slash\langle B\rangle}\,\int_{\mathH}\Omega_{r_{D^{-1}AD}a_{D^{-1}AD}}^2\,\overline{\Omega_{r_Ba_B}^2}\,(ds^2)^{-1},
\end{multline}
where the Kronecker delta indicates that the first integral is only present for the case that $A=B^{\pm 1}$, $e(A)$ is $2$ in the case of order-two elliptic fixed points on the axis of $A$ and is otherwise $1$, and for the second integral the diagonal invariance was used to move the $D$ action to the pair of points.  For each integral, a change of variable by an element of $PSL(2;\mathR)$ results in the inverse element applied to the tuple of points.  It follows that the first integral depends only on the $PSL(2;\mathR)$ conjugacy class of $B$ and the second integral depends only on the $PSL(2;\mathR)$  class of the pair $(D^{-1}AD,B)$.  It follows that the first integral is a function of the geodesic-length for $B$ and the second integral depends only on the distance between/intersection angle of the axes. 

We evaluate the integrals.  The differential $\Omega_{pq}$ is continuous in $p,q$, including at infinity; for $q$ tending to infinity the form limits to $dz/(z-p)$.   For the first integral of (\ref{genform}), we take the pair of points to be $0$ and $\infty$, to obtain for $z=re^{i\theta}$ the integral
\[
-\int_{\mathH}\,\frac{1}{z^2\bar z}\,\overline{\frac{(Bs-s)}{(z-Bs)(z-s)}}\,r^2\sin^2\theta\,rdrd\theta,
\]
which for $P=e^{i\theta}Bs,\,Q=e^{i\theta}s$ becomes
\begin{multline*}
-\int_0^{\pi}\int_0^{\infty}\frac{(P-Q)}{(r-P)(r-Q)}\sin^2\theta\, drd\theta\,=\\ 
\log\frac{(r-Q)}{(r-P)}\bigg|_0^{\infty}\int_0^{\pi}\sin^2\theta\, d\theta\,=\,\frac{\pi}{2}\log\frac{Bs}{s},
\end{multline*}
as expected, since $\grad\ell_*=2/\pi\,\Theta_*$.   For the second integral of (\ref{genform}), we take the first pair of points to be $0$ and $\infty$, to obtain the integral
\[
\int_{\mathH}\,\frac{1}{z^2}\,\overline{\bigg(\frac{(p-q)}{(z-p)(z-q)}\bigg)^2}\,r^2\sin^2\theta\, rdrd\theta,
\]
which for $P=e^{i\theta}p,\,Q=e^{i\theta}q,$ becomes
\begin{equation}\label{mainint}
\int_0^{\pi}\int_0^{\infty}\frac{(P-Q)^2}{(r-P)^2(r-Q)^2}\,\sin^2\theta\,rdrd \theta.
\end{equation}
The $r$ integral has antiderivative
\[
\frac{(P+Q)}{(P-Q)}\log\frac{(r-Q)}{(r-P)}\,-\,\frac{P}{(r-P)}\,-\,\frac{Q}{(r-Q)}.
\]
We are evaluating an area integral and $\theta$ varies in the interval $(0,\pi)$; for $p,q\in\mathR$, $\theta$ as described, and $r$ real positive, the quotient $(r-Q)/(r-P)$ is valued in the complex open lower half plane.  The antiderivative is invariant under interchanging $p,q$; we now normalize $p$ to be positive real.  We use the principal branch of the logarithm; for $r$ close to zero the argument is close to $-\pi$.  Evaluating $r$ at $0,\infty$ and integrating in $\theta$ gives
\[
\frac{\pi}{2}\big(\frac{\kappa+1}{\kappa-1}\log\kappa\,-\,2\big)\quad\mbox{for }\kappa\ \mbox{the ratio }q/p\,=\,(q,p,0,\infty).
\]
To interpret geometrically, compare to \cite[pg. 114]{Rier}, set $u=(\kappa+1)/(\kappa-1)=2(\infty,q,p,0)-1$, to obtain the complex-valued expression
\[
\frac{\pi}{2}\,\big(u\log\frac{u+1}{u-1}\,-\,2\big).
\]
For the lines $\stackrel{\frown}{0\infty}$ and $\stackrel{\frown}{pq}$ disjoint, the ratio $\kappa=p/q$ is positive and the logarithm is real, with $u=\cosh \delta_*$, for $\delta_*$ the distance between the lines.  For the lines intersecting, the ratio $\kappa=q/p$ is negative and the argument of the logarithm is $-\pi$ and evaluation gives
\[
\frac{\pi}{2}\,R(\cos\theta_*)\,-\,\frac{\pi}{2}i\pi\cos\theta_*,
\]
as desired.

\end{proof}

The double coset enumeration admits a topological/geometric description.  We consider that $\alpha$ and $\beta$ are primitive and $\Gamma$ is torsion-free.  On the surface $R$, consider the homotopy classes rel the closed sets $\alpha, \beta$ of arcs connecting $\alpha$ to $\beta$.  For the universal cover, fix a lifting of $\alpha$ to a line $\tilde\alpha_0$ in $\mathH$; then a connecting homotopy class on $R$  lifts to a homotopy class of arcs connecting $\tilde\alpha_0$ to $\tilde\beta$ (a line lifting of $\beta$).  The relation rel $\alpha$ corresponds to the relation of the $\langle A\rangle$ action on homotopy lifts. In particular, the non trivial classes on $R$ rel $\alpha,\beta$ {\em biject} to the classes in $\mathH$ rel $\tilde\alpha_0, \tilde\beta$, for $\tilde\beta$ (disjoint from $\tilde\alpha_0$) ranging over the line liftings of $\beta$ modulo the action of $\langle A\rangle$; the non trivial classes on $R$ correspond to lines 
$\tilde\beta$ disjoint from $\tilde\alpha_0$.  
To enumerate the pairs $(\tilde\alpha_0,\tilde\beta)$ for $\tilde\beta$ distinct modulo the $\langle A\rangle$ action, for $A$ generating the stabilizer of $\tilde\alpha_0$ and $B$ generating the stabilizer of a line lifting of $\beta$, then line pairs distinct modulo the $\langle A\rangle$ action {\em correspond} bijectively to double cosets by the rule
\[
(\tilde\alpha_0,\tilde\beta)\,=\,(\operatorname{axis}(A),\operatorname{axis}(DBD^{-1}))\quad\mbox{corresponds to}\quad D\in\langle A\rangle\backslash\Gamma\slash\langle B\rangle.
\]
The relation $\operatorname{axis}(DBD^{-1})=D(\operatorname{axis}(B))$ is part of the correspondence.  For a finite number of double cosets the corresponding axes intersect.  Overall the axes enumeration by double cosets, enumerates pairs of line liftings of $\alpha$ and $\beta$ modulo the diagonal action of the group $\Gamma$.  The geometric description comes from the description of a pair of lines.  A pair of lines either intersects or has a unique perpendicular geodesic, minimizing the connecting distance.  The cosine and hyperbolic cosine describe the geometry of the configurations. 

The present approach to evaluating the pairing is a combination and simplification of earlier works.  The role of the cyclic group in Gardiner's formula was first noted by Hejhal \cite[Theorem 4]{Hejmono}.  The telescoping of the cyclic group sums appears in the proofs of Theorem 3.3 and 3.4 of \cite{Wlsymp} and in Theorem 2 of \cite{Rier}, although in each case the telescoping is presented as a special feature.  The basic integral (\ref{mainint}) is simpler than found in the earlier formulations.   The present approach can be applied to evaluate the second twist Lie derivatives $t_{\alpha}t_{\beta}\llg$.  The first derivative $t_{\alpha}\llb$ is a sum of cosines of intersection angles. A cosine is given by a cross ratio, the starting point for the above considerations.  

\section{Thurston shears}\label{Thurshear}

We are interested in Thurston shears (cataclysms) on ideal geodesics for a Riemann surface with cusps.  Thurston studied the shear deformation for compact geodesic laminations \cite{Thurstre}.  Bonahon developed the fundamental results in a sequence of papers \cite{Bonshear,BontHd,BonTran}.  We present a brief summary of Bonahon's basic results following \cite{Bonshear}.  In a series of works \cite{Pendec,Pen,Penbk}, Penner developed a deformation theory of Riemann surfaces with cusps by considering shear deformations on ideal geodesics triangulating a surface.  Our interests include Penner's $\lambda$-length formulas and formulas for the WP K\"{a}hler/symplectic form \cite{PapPen}.  We present a brief summary of Penner's results following the exposition of the book \cite{Penbk}.

A {\em geodesic lamination} $\lambda$ is a closed union of disjoint simple geodesics.  A geodesic lamination for a compact surface $R$ is maximal provided $R-\lambda$ is a union of ideal triangles.   A {\em transverse measure} for a geodesic lamination $\lambda$ is the assignment for each transverse arc $k$ with endpoints in $\lambda^c$ of a positive Borel measure $\mu$ on the transverse arc with $\operatorname{supp}(\mu)=\lambda\cap k$. If transverse arcs $k,k'$ are homotopic through arcs with endpoints in $\lambda^c$ then the assigned measures correspond by the homotopy.   The assignment $k\mapsto \mu(k)$ is additive under countable subdivision of transverse arcs.  A measured geodesic lamination defines an earthquake deformation by interpreting $\mu(k)$ as the relative left shift of the $\lambda$ complementary regions containing the $k$ endpoints.  By allowing left and right shifts on complementary regions, Thurston defined the shear deformation.  The relative left shift of $\lambda$ complementary regions again defines a functional on transverse arcs.  The functional, called a {\em transverse cocycle}, is only finitely additive under subdivision of transverse arcs. A transverse cocycle is not given by integrating a measure, rather is given by elements of the dual of H\"{o}lder continuous functions on transverse arcs.   The space of transverse cocycles $\mathcal H(\lambda)$ on a geodesic lamination is a finite dimensional vector space.  

Teichm\"{u}ller space is the space of isotopy classes of hyperbolic metrics.  A geodesic lamination is represented on each isotopy class of a hyperbolic metric.  Shear deformations on a given maximal geodesic lamination parameterize Teichm\"{u}ller space.  A projection between leaves is defined for the lift of a lamination to the universal covering of the surface.   The construction begins with the observation that the unit area horoballs in an ideal triangle are foliated by horocycles.  The tangent field of the partial foliation of ideal triangles extends to a Lipschitz vector field on the universal covering; the vector field is not defined on the small trilateral regions in each ideal triangle.  The Lipschitz vector field defines a projection between leaves of the lift of the lamination.  The projection defines a relative displacement between lamination complementary regions.  The relative displacement is finitely additive.  The relative left displacement is called the {\em shearing cocycle} $\sigma_R$ of the surface $R$.  The transverse cocycle for the shear deformation from a surface $R_1$ to a surface $R_2$ is the difference $\sigma_{R_1}-\sigma_{R_2}$ of shearing cocycles.  For a train track carrying a geodesic lamination, transverse measures are specified in terms of non negative weights on the track and transverse cocycles are specified in terms of real weights.  We also refer to the Thurston symplectic intersection form $\tau$ for a train track.  The shearing cocycles for a maximal geodesic lamination provide an embedding of Teichm\"{u}ller space.  

\begin{theorem} \textup{\cite[Theorems A, B]{Bonshear}}\label{thrmAB}. The map $R\mapsto \sigma_R$ defines a real analytic homeomorphism from $\caT$ to an open convex cone $\caC(\lambda)$ bounded by finitely many faces in $\caH(\lambda)$.   A transverse cocycle $\mu$ is in the cone $\caC(\lambda)$ if and only if $\tau(\mu,\nu)>0$ for every transverse measure $\nu$ for $\lambda$.
\end{theorem}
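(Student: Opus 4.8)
The plan is to manufacture real-analytic coordinates on $\caT$ out of the shearing data, to invert them by a developing-map reconstruction, and then to identify the image with the stated cone through a local sign computation together with a completeness/properness argument. Fix a finite train track $\vartheta$ carrying $\lambda$; since $R-\lambda$ is a finite union of ideal triangles, a tie neighbourhood of $\lambda$ has complementary regions that are trigons, one per ideal triangle. Transverse cocycles for $\lambda$ are recorded by real branch weights on $\vartheta$ subject to the switch relations, so $\caH(\lambda)$ is the linear solution space of the switch system and the set $\caM(\lambda)$ of transverse measures is the polyhedral subcone cut out by nonnegativity; one checks $\dim_{\mathR}\caH(\lambda)=\dim_{\mathR}\caT$. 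For a hyperbolic metric on $R$, lift $\lambda$ to $\mathH$, realize the horocyclic partial foliation of the complementary ideal triangles, and read off from the Lipschitz comparison field the relative displacement of complementary regions across each branch of $\vartheta$. Such a displacement is a cross ratio of the four ideal endpoints of two adjacent triangle lifts, hence varies real-analytically with the holonomy $\Gamma\subset PSL(2;\mathR)$, hence with the point of $\caT$; this is the real-analytic map $R\mapsto\sigma_R$ into $\caH(\lambda)$.

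For injectivity and openness I would run the reconstruction: given a cocycle, glue abstract ideal triangles along common edges with the prescribed shears to obtain an equivariant developing map into $\mathH$ and a holonomy homomorphism of $\pi_1(R)$ into $PSL(2;\mathR)$. When the cocycle is $\sigma_R$ this reproduces the original holonomy up to conjugacy, and a hyperbolic structure on the compact $R$ is determined by its holonomy; hence $R\mapsto\sigma_R$ is injective and the reconstruction is a continuous inverse on the image, so the map is a homeomorphism onto its image. Since domain and target have equal dimension, invariance of domain makes the image open in $\caH(\lambda)$.

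For the inclusion $\sigma(\caT)\subseteq\caC(\lambda):=\{\mu\in\caH(\lambda):\tau(\mu,\nu)>0\ \text{for all}\ \nu\in\caM(\lambda)\setminus 0\}$, I would expand $\tau(\sigma_R,\nu)$ as a finite sum of local terms, one per branch of $\vartheta$ paired against $\nu$, and show each term has a definite positive sign coming from the geometry of the horocyclic foliation of an ideal triangle: the nondegenerate trilateral gap at the centre of each complementary triangle forces the comparison field to displace always to the same side relative to the transverse measure, so $\tau(\sigma_R,\nu)>0$ when $\nu\neq 0$. The set $\caC(\lambda)$ is an intersection of open half-spaces, invariant under positive scaling, hence an open convex cone; and since $\caM(\lambda)$ is a polyhedral cone only its finitely many extreme rays matter, so $\caC(\lambda)$ is bounded by finitely many faces.

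It remains to see $\sigma(\caT)=\caC(\lambda)$. Since $\caC(\lambda)$ is connected and $\sigma(\caT)$ is open in it, it suffices to know that the reconstruction carries every $\mu\in\caC(\lambda)$ to a genuine point of $\caT$, i.e.\ that the glued hyperbolic surface is complete of finite area exactly when $\mu$ lies in $\caC(\lambda)$: incompleteness of the glued structure produces a measured sublamination $\nu$ along which the shears fail to close up, witnessed by $\tau(\mu,\nu)\leq 0$, while the positivity just established supplies the reverse implication. Equivalently, one argues properness of $\sigma\colon\caT\to\caC(\lambda)$: if $R_n$ leaves every compact subset of $\caT$ then some essential geodesic-length tends to $0$ or $\infty$, and the collar and thick--thin description shows the corresponding shear coordinates then leave every compact subset of $\caC(\lambda)$ --- a coordinate diverges, or $\tau(\sigma_{R_n},\nu)\to 0$ for the transverse measure carried near the pinching locus --- so $\sigma(\caT)$ is closed in $\caC(\lambda)$ and hence equals it. The main obstacle is precisely this completeness--positivity dictionary: establishing the uniform sign of the local contributions to $\tau(\sigma_R,\nu)$, and identifying approach to $\partial\caC(\lambda)$ with degeneration of the hyperbolic structure.
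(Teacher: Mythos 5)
First, a point of comparison: the paper does not prove Theorem \ref{thrmAB} at all --- it is quoted from Bonahon \cite{Bonshear} --- so your plan has to be measured against Bonahon's argument rather than anything in this text. Measured that way, the outline of the easy steps (train-track model of $\caH(\lambda)$, real-analyticity of $R\mapsto\sigma_R$ via cross-ratios of endpoint quadruples, injectivity plus invariance of domain, convexity and finitely many faces from the finitely many extremal transverse measures) is sound, but the two hard steps are genuinely gapped. The central one is the reconstruction: you treat it as a finite gluing of ideal triangles ``along common edges with the prescribed shears.'' That is the correct picture when $\lambda$ is an ideal triangulation of a punctured surface (the Penner--Fock setting used elsewhere in this paper), but Theorem \ref{thrmAB} concerns a maximal geodesic lamination on a compact surface: $\lambda$ generically has uncountably many leaves, the complementary triangles do not share edges pairwise, and the shear datum is only a finitely additive transverse cocycle of H\"{o}lder-distribution type. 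The developing map must be obtained as a limit of infinitely many elementary shears along a transversal, and its convergence --- together with discreteness, faithfulness and cocompactness of the resulting holonomy --- depends on H\"{o}lder estimates in which contributions of leaves are controlled by their exponential divergence; this is the analytic core of Bonahon's proof and is entirely missing from the plan. Relatedly, your positivity argument asserts that each branch-local contribution to $\tau(\sigma_R,\nu)$ has a definite sign; the Thurston form is an antisymmetric switch-by-switch expression whose individual terms carry no fixed sign, so this local claim fails. The actual source of positivity is the length identity $\tau(\nu,\sigma_R)=\ell_{\nu}(R)>0$ (Theorem \ref{thrmE} here, Bonahon's Theorem E), which is itself a nontrivial computation, not a pointwise sign observation.

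The surjectivity step is the other gap. Saying that incompleteness of the glued structure ``produces a measured sublamination $\nu$ with $\tau(\mu,\nu)\le 0$'' names the desired dictionary rather than establishing it: on a compact surface the failure mode is not metric incompleteness of a cusped structure but failure of the developing map to be a global isometry onto $\mathH$, and converting that failure into a transverse measure with nonpositive pairing is precisely the content of Bonahon's Theorem B. The properness alternative is also too thin as stated: bounding the pairings $\tau(\sigma_{R_n},\nu)$ for the finitely many ergodic transverse measures of $\lambda$ does not by itself confine $R_n$ to a compact subset of $\caT$, since the measured sublaminations of a maximal $\lambda$ need not fill the surface (for instance finitely many closed leaves with spiralling separatrices); one must exploit convergence of the full cocycle, not only of its pairings with measures. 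Until these two steps --- convergence of the infinite shear construction for an arbitrary cocycle in $\caC(\lambda)$, and the equivalence between lying in $\caC(\lambda)$ and producing a genuine point of $\caT$ --- are supplied, the plan reproduces the statement's architecture but not its proof.
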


The $R$-length $\ell_{\mu}(R)$ of the transverse cocycle $\mu$ for $\lambda$ is a generalization of the total-length of a transverse measure.  The $R$-length is defined as
\[
\ell_{\mu}(R)\,=\,\int\int_{\lambda}d\ell\, d\mu,
\]
computed locally by first integrating hyperbolic length measure along the leaves of $\lambda$ and then integrating the local function on the local space of $\lambda$ leaves with respect to the H\"{o}lder distribution $\mu$.  The $R$-length generalizes the weighted length for weighted simple closed geodesics; $R$-length is given by the Thurston intersection form and the shearing cocycle as follows.
\begin{theorem} \textup{\cite[Theorem E]{Bonshear}}\label{thrmE}. If $\mu$ is a transverse cocycle for the maximal geodesic lamination $\lambda$ and $\sigma_R\in\caH(\lambda)$ is the shearing cocycle of the hyperbolic surface $R$ then $\ell_{\mu}(R)=\tau(\mu,\sigma_R)$.
\end{theorem}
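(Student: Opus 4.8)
The plan is to reduce both $\ell_{\mu}(R)$ and $\tau(\mu,\sigma_R)$ to finite sums of local contributions indexed by a train track carrying $\lambda$, and then to match these contributions. Fix a train track $\Phi$ carrying $\lambda$ with full support and with all complementary regions ideal triangles; this is possible since $\lambda$ is maximal, and a transverse cocycle is then determined by a system of real branch weights satisfying the switch relations, the transverse measures being cut out by requiring the weights non-negative. For fixed $R$, both $\mu\mapsto\ell_{\mu}(R)$ and $\mu\mapsto\tau(\mu,\sigma_R)$ are linear functionals of $\mu\in\caH(\lambda)$: the first because $\ell_{\mu}(R)$ is linear in the H\"{o}lder distribution $\mu$, the second by definition of the Thurston form. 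Each will be written as a sum over local pieces --- flow boxes for $\lambda$ in one case, switches of $\Phi$ in the other --- with each piece a linear expression in the branch weights of $\mu$ whose coefficients depend only on the nearby geometry of $R$, and it then suffices to match the two local expressions.

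I would first record the local form of $\tau$. Thurston's intersection form is given by the standard formula $\tau(a,b)=\frac12\sum_s\sum_{(e,e')}\varepsilon_s(e,e')\,(a_eb_{e'}-a_{e'}b_e)$, a sum over the switches $s$ of $\Phi$ and over ordered pairs of distinct branches incident to $s$, with signs $\varepsilon_s(e,e')\in\{-1,0,1\}$ determined by the side of the switch and the left-right order of the branches at $s$; after regrouping, this is a sum of one antisymmetric bilinear term for each complementary ideal triangle of $R\setminus\lambda$, in the branch weights of its three edges. Producing exactly this expression from the length integral is the target.

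Next I would unwind $\ell_{\mu}(R)$ using a finite atlas of flow boxes for $\lambda$ subordinate to the decomposition $R\setminus\lambda=\bigsqcup_j T_j$ into ideal triangles. Each $T_j$, equipped with its partial horocyclic foliation by the unit-area horoballs at the three vertices, decomposes away from the central trilateral and the three spikes into three rectangles $P_{j,1},P_{j,2},P_{j,3}$, the leaves through $P_{j,k}$ running between a fixed pair of edges of $T_j$; parametrizing such a leaf by the signed horocyclic arc length of one of its feet, the length of the leaf segment inside $T_j$ is an explicit function of that parameter which stays bounded toward the shared vertex of the two edges and diverges logarithmically as the leaf limits onto the third edge $g$ of $T_j$. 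Integrating the length function against $\mu$ over the rectangle produces a finite part plus a divergent part attached to $g$; by the construction of the horocyclic foliation the two triangles adjacent to $g$ contribute matching horocyclic arc lengths along $g$, so the two divergent parts cancel, and the residual offset between the two horocyclic parametrizations of $g$ is exactly the shearing weight $\sigma_R(g)$. Carrying out the resulting one-dimensional integral inside an ideal triangle and summing the three rectangle contributions of $T_j$ should yield precisely an antisymmetric bilinear expression in the branch weights of $\mu$ and of $\sigma_R$ on the three edges of $T_j$, matching the local contribution of $\tau$ from the previous paragraph.

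Summing over $j$ then gives $\ell_{\mu}(R)=\tau(\mu,\sigma_R)$. The main obstacle is the middle step: making the flow-box definition of $\ell_{\mu}(R)$ rigorous for a genuine H\"{o}lder distribution $\mu$ --- in particular checking that the leaf-length functions are H\"{o}lder of the right exponent so that the pairings against $\mu$ are defined and finitely additive under subdivision of transversals --- and, most delicately, verifying that the divergent horocyclic contributions cancel exactly across each shared edge, so that what survives is governed entirely by the shears $\sigma_R(g)$; it is here that the unit-area normalization of the horoballs in the definition of $\sigma_R$ enters essentially. A secondary point is to pin down the sign and orientation conventions for $\tau$ and for $\Phi$ so that the comparison is an equality rather than a proportionality; one could instead bypass part of this bookkeeping by noting, via Theorem \ref{thrmAB}, that the differences $\sigma_{R'}-\sigma_{R''}$ span $\caH(\lambda)$ and reducing to $\mu=\sigma_{R'}-\sigma_{R''}$, but the direct local matching above seems the most transparent route.
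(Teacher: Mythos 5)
This statement is not proved in the paper: it is quoted verbatim from Bonahon \cite[Theorem E]{Bonshear} as background, so your proposal can only be measured against Bonahon's own argument, and measured that way it has a genuine gap at its central step. The quantity $\ell_{\mu}(R)=\int\int_{\lambda}d\ell\,d\mu$ is an integral over the lamination itself: its local pieces are flow boxes containing bundles of leaf arcs of $\lambda$ cut by a transversal. For a typical maximal $\lambda$ on a compact surface (minimal, with uncountably many leaves) only finitely many leaves are boundary edges of the complementary triangles $T_j$; transversally $\lambda$ is a Cantor set and essentially all of the mass of $d\ell\,d\mu$ sits on leaves that bound no triangle. Your middle step instead integrates horocyclic-arc lengths against $\mu$ over the rectangles $P_{j,k}$ inside the \emph{open} complementary triangles, but these rectangles contain no leaf segment of $\lambda$ whatsoever, and a transversal contained in $T_j$ meets $\lambda$ at most at its endpoints, so the pairings you propose do not compute, or even approximate, $\ell_{\mu}(R)$. (The claimed asymptotics are also off: the horocyclic leaves of the partial foliation are confined to the spikes and never limit onto the opposite edge.) Likewise there is no single number ``$\sigma_R(g)$ per edge'' available to absorb divergences: $\sigma_R$ is a finitely additive cocycle whose values on transversals crossing whole Cantor sets of leaves are what enter $\tau(\mu,\sigma_R)$ through train-track weights. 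The picture you describe is really the finite setting of the present paper --- finitely many ideal geodesics, balanced weights, truncation at horocycles or collars with cancellation of divergences --- which is the world of Theorem \ref{twlth} and Corollary \ref{commshear}, not of Bonahon's theorem.

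What the proof actually requires, and what Bonahon supplies, is a quantitative comparison on train tracks carrying $\lambda$ with long branches: leaf arcs crossing a common branch fellow-travel, so their lengths agree up to errors exponentially small in the branch length, whence $\ell_{\mu}(R)$ is the limit of Riemann-type sums $\sum_e \mu(t_e)\,r_e$ over branches (this is precisely where the H\"{o}lder regularity of $\mu$ against the leafwise length data is used); on the other side, estimates built from the horocyclic foliation relate these same sums to $\tau(\mu,\sigma_R)$, with errors that vanish as the track is split. None of these estimates appear in your sketch, and your fallback through Theorem \ref{thrmAB} does not rescue it: knowing that differences $\sigma_{R'}-\sigma_{R''}$ span $\caH(\lambda)$ gives no way to evaluate $\ell_{\sigma_{R'}-\sigma_{R''}}(R)$ without a result of comparable depth, such as the derivative formula for length along shear paths. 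So the proposal identifies the right objects but its main step, the triangle-by-triangle localization with cancellation of divergences, would fail for a general maximal geodesic lamination.
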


The Theorem \ref{thrmAB} embedding of $\caT$ into the vector space $\caH(\lambda)$ provides identifications of tangent spaces $\mathbf T\caT$ with $\caH(\lambda)$.  The identification enables a comparison of symplectic forms.

\begin{theorem} \textup{\cite{BonSoz}}.  Let $R$ be a compact hyperbolic surface with a maximal geodesic lamination $\lambda$.  Then for the tangent space identifications $\mathbf T\caT \simeq \caH(\lambda)$, the WP K\"{a}hler form is a constant multiple of the Thurston intersection form.
\end{theorem}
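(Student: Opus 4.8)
The plan is to push both forms forward to the linear space $\caH(\lambda)$ by means of the shearing-cocycle coordinates of Theorem~\ref{thrmAB}, and then to show that their difference vanishes by testing it against Fenchel-Nielsen twist vector fields, which span the tangent space. By Theorem~\ref{thrmAB} the map $R\mapsto\sigma_{R}$ is a real-analytic diffeomorphism of $\caT$ onto the open cone $\caC(\lambda)\subset\caH(\lambda)$, so $d\sigma_{R}\colon\mathbf T_{R}\caT\to\caH(\lambda)$ is a linear isomorphism --- this is the asserted identification --- and under it the Thurston intersection form $\tau$ becomes a translation-invariant, constant-coefficient $2$-form on $\caC(\lambda)$. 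Write $\Omega=2\,\omega_{WP}+\sigma^{*}\tau$, where the factor $2$ and the eventual sign of the multiple are fixed by the normalizations in~\eqref{wpdual} and Theorem~\ref{thrmE}; the goal is to prove $\Omega\equiv 0$. Since the twist fields $t_{\gamma}$, as $\gamma$ runs over all simple closed geodesics, span $\mathbf T_{R}\caT$ at every $R$ --- equivalently the differentials $d\llg$ span $T^{*}_{R}\caT$, a standard property of geodesic-length functions --- it suffices to establish $\iota_{t_{\gamma}}\Omega=0$ for every simple closed geodesic $\gamma$.

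The identity to be proved is $\iota_{t_{\gamma}}(\sigma^{*}\tau)=d\llg$, for then, together with the twist-length duality $\iota_{t_{\gamma}}(2\,\omega_{WP})=-d\llg$ extracted from~\eqref{wpdual}, it gives $\iota_{t_{\gamma}}\Omega=0$. When $\gamma$ is a closed leaf of $\lambda$ this is transparent. With unit weight $\gamma$ is then a transverse measure for $\lambda$, the Fenchel-Nielsen flow along $\gamma$ is the corresponding earthquake, which in shearing-cocycle coordinates is the translation $\sigma\mapsto\sigma+s\gamma$, so $d\sigma_{R}(t_{\gamma})=\gamma$; by Theorem~\ref{thrmE} the function $\llg(R)=\tau(\gamma,\sigma_{R})$ is linear in $\sigma_{R}$, whence $d\llg=\tau(\gamma,d\sigma_{R}(\,\cdot\,))=\iota_{t_{\gamma}}(\sigma^{*}\tau)$. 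For a maximal lamination this disposes only of the closed leaves --- a set that may be empty --- and the real content is the same identity for a simple closed geodesic $\gamma$ crossing $\lambda$ essentially, namely the statement that the Thurston form, expressed in shearing-cocycle coordinates, obeys twist-length duality against \emph{every} closed curve and not merely against the leaves of $\lambda$.

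I would attack this by the same sort of coordinate computation used elsewhere in this paper: write $\llg$ as the logarithm of the trace of the product of the hyperbolic shear matrices attached to the successive crossings of $\gamma$ with $\lambda$, a function of the finitely many coordinates $\sigma_{e}$ through which $\gamma$ passes; differentiate to obtain $d\llg$ in these coordinates; compute the effect of the twist $t_{\gamma}$ on the $\sigma_{e}$; and check that the pairing of $d\sigma_{R}(t_{\gamma})$ against the explicit constant combinatorial coefficients of $\tau$ reproduces $d\llg$. Alternatively one may seek to recognize the shearing-cocycle coordinates as affine coordinates in which the Atiyah-Bott-Goldman symplectic form on the $PSL(2;\mathbb{R})$ character variety has constant coefficients equal to a fixed multiple of $\tau$ (as in Fock's treatment of the punctured case), so that $\sigma^{*}\tau$ is a fixed multiple of the Goldman form restricted to $\caT$, which by Goldman's theorem is a fixed multiple of $\omega_{WP}$; the identity for $t_{\gamma}$ then follows once more from~\eqref{wpdual}.

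Once $\iota_{t_{\gamma}}\Omega=0$ holds for every simple closed geodesic $\gamma$, the spanning of $\mathbf T_{R}\caT$ by the $t_{\gamma}$ forces $\Omega\equiv 0$, that is $\omega_{WP}=-\tfrac{1}{2}\sigma^{*}\tau$ up to the conventional sign, which is the assertion of the theorem. The main obstacle is the step of the previous paragraph: verifying twist-length duality for the Thurston form against curves transverse to $\lambda$ forces one to import and carefully reconcile the shear-coordinate trace formula for geodesic-length, the combinatorial formula for $\tau$, and the action of a Dehn twist on shear coordinates, tracking all normalization constants, or, on the alternative route, to assemble the two independent comparisons with the Goldman form and confirm that the resulting constant is nonzero and independent of $\lambda$.
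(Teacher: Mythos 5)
You should first note that the paper does not prove this statement at all: it is quoted as background from \cite{BonSoz}, so there is no in-text argument to compare against, and your proposal has to stand on its own. Its skeleton is reasonable: using Theorem \ref{thrmAB} to identify $\mathbf T\caT$ with $\caH(\lambda)$, observing that $\tau$ becomes a constant-coefficient form there, and reducing, via the spanning of the tangent space by twist fields and the duality (\ref{wpdual}), to the single contraction identity $\iota_{t_{\gamma}}(\sigma^{*}\tau)=d\llg$ for every simple closed geodesic $\gamma$ is a legitimate reduction. Your closed-leaf computation is essentially correct, with the small correction that unit-weight $\gamma$ is a transverse \emph{cocycle} but not a transverse \emph{measure} for $\lambda$ (the support condition fails unless $\lambda=\gamma$); what you actually use is Theorem \ref{thrmE} applied to the cocycle $\gamma$ together with Bonahon's fact \cite{Bonshear} that the earthquake along $\gamma$ is translation by $s\gamma$ in shearing-cocycle coordinates, and both are available.

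The genuine gap is that the case carrying all the content, $\gamma$ transverse to $\lambda$, is not proved, and the first route you propose for it rests on a picture that does not exist in the setting of the theorem. For a maximal geodesic lamination of a \emph{compact} surface, $\lambda$ is not a finite union of ideal geodesics: a transverse closed geodesic meets $\lambda$ in an infinite set (generally a Cantor set; even when $\lambda$ has only finitely many leaves, the non-closed leaves spiral onto closed leaves and $\gamma\cap\lambda$ accumulates there), so there are no ``finitely many coordinates $\sigma_{e}$ through which $\gamma$ passes'' and no finite product of shear matrices whose trace computes $\llg$. Making sense of the holonomy along $\gamma$ as a function of the shearing cocycle, and differentiating it, requires exactly Bonahon's H\"{o}lder-distribution and train-track machinery; that is the analytic content of \cite{Bonshear,BonSoz} and cannot be treated as a routine coordinate check. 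The finite model you describe is the punctured-surface ideal-triangulation setting used elsewhere in this paper, not the compact maximal-lamination setting of the statement. Your alternative route through the Goldman form hides the same difficulty in its first step: the claim that the shearing-cocycle coordinates are affine coordinates in which the Goldman form has constant coefficients proportional to $\tau$ is, once Goldman's form is identified with $\omega_{WP}$, equivalent to the theorem being proved for compact maximal laminations (Fock's computation covers ideal triangulations of punctured surfaces only), so as stated that route is circular unless that claim is proved independently. In sum, you have a sensible reduction and a program, but the decisive step is missing.
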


A {\em decoration} for a hyperbolic metric with cusps is the designation of a horocycle at each cusp.  Decorated Teichm\"{u}ller space $\caD\caT$ is the space of isotopy classes of hyperbolic metrics with cusps and decorations \cite{Penbk}.  The decorated Teichm\"{u}ller space is naturally fibered over Teichm\"{u}ller space with fibers given by varying the horocycle lengths in a decoration.  A section of the fibration is given by prescribing horocycle lengths.  A decoration enables a notion of relative length for ideal geodesics. The $\lambda$-{\em length} of an ideal geodesic $\alpha$ is $\lambda(\alpha)=e^{\delta(\alpha)/2}$, where $\delta(\alpha)$ is the signed distance along $\alpha$ between the decoration horocycles; the distance is positive in the case that the associated horodiscs are disjoint.  We are interested in the $\lambda$-lengths for the isotopy class of a given ideal triangulation $\Delta$ of hyperbolic metrics.  An ideal triangulation for a genus $g$ surface with $n$ cusps has $6g-6+3n$ ideal geodesics and $4g-4+2n$ triangles.

\begin{figure}[thb] 
  \centering
  \includegraphics[bb=0 0 627 639,width=3in,height=3.06in,keepaspectratio]{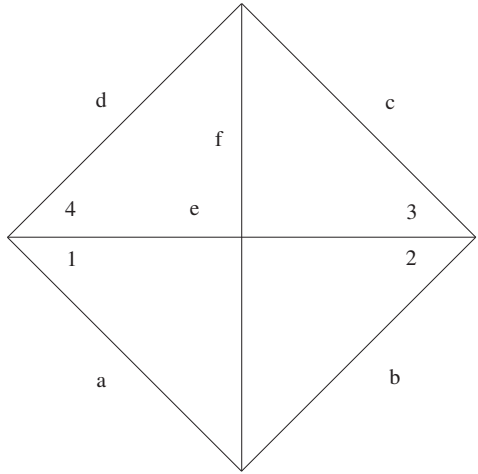}
  \caption{Adjacent ideal triangles with a second diagonal. }
  \label{fig:diamond}
\end{figure}

Additional parameters are associated to an ideal triangulation. The ideal geodesics divide the decoration horocycles into segments.  The $h$-{\em lengths} are the lengths of the horocycle segments.   For an ideal triangle, the lengths are related by $h_{\hat a}=\lambda_a/\lambda_b\lambda_c$, where for the horocycle segment $\hat a$ the triangle opposite side is $a$ and the triangle adjacent sides are $b,c$.  We are particularly interested in the shear coordinates. An ideal triangle has a median.  For a pair of triangles adjacent along an ideal geodesic $\alpha$, drop perpendiculars from the medians to $\alpha$.  The {\em shear coordinate} for $\alpha$ is the signed distance between the median projections; the distance is positive if the projections lie to the right of one another along $\alpha$.  The shear coordinate is given simply in terms of $\lambda$-lengths and $h$-lengths.  In Figure \ref{fig:diamond}, the shear coordinate for the diagonal $e$ is given as 
\begin{equation}\label{shear}
\sigma_e\,=\,\log \frac{\lambda_b\lambda_d}{\lambda_a\lambda_c}\,=\,\log \frac{h_1}{h_4}\,=\,\log \frac{h_3}{h_2},\ 
\textup{\cite[Chap. 1, Corollary 4.16]{Penbk}.} 
\end{equation} 

\noindent The fibers of the Teichm\"{u}ller fibration $\caD\caT\rightarrow \caT$ are characterized simply by constant shear coordinates.  

By the classical result of Whitehead, triangulations with common vertices can be related by a sequence of replacing diagonals in quadrilaterals \cite[Chap. 2, Lemma 1.4]{Penbk}.  The effect on $\lambda$-lengths of replacing diagonals is given by Penner's basic Ptolemy equation $\lambda_{13}\lambda_{24}=\lambda_{12}\lambda_{34}+\lambda_{14}\lambda_{23}$ for the configuration of Figure \ref{fig:diamond}, \cite[Chap. 1, Corollary 4.6]{Penbk}.  We also note the {\em coupling equation} $h_1h_2=h_3h_4$ for the configuration of Figure
 \ref{fig:diamond}; the equation follows from the definition of $h$-lengths.  The $\lambda$ and $h$ lengths provide global coordinates for 
$\caD\caT$.

\begin{theorem}\label{Penthrm}\textup{\cite[Chap. 2, Theorems 2.5, 2.10; Chap. 4, Theorems 2.6, 4.2]{Penbk}.}  For the ideal triangulation $\Delta$, the $\lambda$-length mapping $\caD\caT\rightarrow \mathbb R_{>0}^{\Delta}$ is a real-analytic homeomorphism.   For $V$ the vertex sectors of the ideal triangulation, the $h$-length mapping $\caD\caT\rightarrow \mathbb R_{>0}^V$ is a real-analytic embedding into a real-algebraic quadric variety given by coupling equations.  For the ideal triangulation, the shear coordinate mapping $\caT\rightarrow \mathbb R^{\Delta}$ is a real-analytic homeomorphism onto the linear subspace given by vanishing of the sum of shears around each cusp.   The action of the mapping class group $\operatorname{MCG}$ is described by permutations followed by finite compositions of Ptolemy transformations.  
\end{theorem}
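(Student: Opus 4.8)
The plan is to fix a topological ideal triangulation $\Delta$, with its $6g-6+3n$ edges and $4g-4+2n$ triangles, and to show that a decorated hyperbolic surface is both determined by and freely reconstructed from the edge $\lambda$-lengths; the $h$-length and shear statements then follow by linear bookkeeping. The first observation is the dimension match: the Teichm\"uller space of the $n$-times punctured genus $g$ surface has real dimension $6g-6+2n$, a decoration adds an $\mathbb R_{>0}^{n}$ of horocycle choices, so $\dim\caD\caT=6g-6+3n=|\Delta|$. Real-analyticity of the $\lambda$-length map is immediate, since $\lambda(\alpha)=e^{\delta(\alpha)/2}$ and the signed distance $\delta(\alpha)$ between decoration horocycles along a lift of $\alpha$ depends real-analytically on the holonomy representation and on the analytically varying decoration. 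Conversely, a decorated ideal triangle in $\mathH$ is rigid: it is determined up to isometry by its three $\lambda$-lengths, which may be prescribed arbitrarily in $\mathbb R_{>0}^{3}$, equivalently by its three $h$-lengths via $h_{\hat a}=\lambda_a/\lambda_b\lambda_c$, which inverts to $\lambda_a=(h_{\hat b}h_{\hat c})^{-1/2}$, a real-analytic bijection of $\mathbb R_{>0}^{3}$.

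Given arbitrary positive numbers on the edges of $\Delta$, I would build the $4g-4+2n$ decorated triangles and glue two triangles across each shared edge $\alpha$ by the unique isometry matching their decoration horocycles, the relative position being recorded by the shear $\sigma_\alpha$ of (\ref{shear}). The resulting object is a finite-area hyperbolic surface carrying $\Delta$, and the point to verify is that every puncture is an honest cusp with no cone points or other ends: the holonomy around a puncture is a product of the triangle-gluing isometries, each fixing the common ideal endpoint, hence is a parabolic whose translation length is the sum of the surrounding (positive) $h$-lengths, so the end is a cusp cut by a horocycle of exactly that length. This reconstruction is manifestly real-analytic in the edge data, since the holonomy matrices are algebraic in the $\lambda$'s, and it is inverse to the $\lambda$-length map because (\ref{shear}) shows the gluing is recovered from the four surrounding $\lambda$-lengths while the decorated triangles are rigid; hence $\lambda\colon\caD\caT\to\mathbb R_{>0}^{\Delta}$ is a real-analytic homeomorphism. (Penner instead organizes surjectivity and analyticity together through the convex hull construction in Minkowski $3$-space.) The careful execution of this rigidity-plus-reconstruction step --- in particular checking that reading off $\lambda$-lengths and rebuilding are mutually inverse --- is where I expect the main work to lie.

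The $h$-length map factors as $\lambda$ followed by the monomial, hence real-analytic, map $\lambda\mapsto(\lambda_a/\lambda_b\lambda_c)_{\hat a\in V}$; in logarithmic coordinates this is a linear embedding whose image I would identify, by a rank computation for the exponent matrix, with the common solution set of the coupling equations $h_1h_2=h_3h_4$ of Figure \ref{fig:diamond}, a real-algebraic quadric of dimension $|V|-|\Delta|=6g-6+3n$. For the shear coordinates, $\sigma_\alpha$ as in (\ref{shear}) is invariant under rescaling the decoration, so it descends to a map $\caT\to\mathbb R^{\Delta}$; in logarithmic $\lambda$-coordinates the decoration changes act freely by translation by an $n$-dimensional subspace $W$, and the shear map is the induced linear quotient $\mathbb R^{\Delta}/W\to\mathbb R^{\Delta}$, which (by a rank count) is injective, hence a real-analytic homeomorphism onto a linear subspace of dimension $6g-6+2n$. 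Telescoping the $h$-ratio form of $\sigma$ around a cusp, equivalently parabolicity of the puncture holonomy, shows that the sum of shears around each cusp vanishes, and since the subspace of $\mathbb R^{\Delta}$ cut out by these $n$ independent conditions also has dimension $6g-6+2n$, the image is precisely that subspace. Finally the $\operatorname{MCG}$ statement is formal: a mapping class sends $\Delta$ to another triangulation with the same vertices, Whitehead's theorem (\cite[Chap. 2, Lemma 1.4]{Penbk}) connects the two by a finite sequence of diagonal exchanges, and each exchange acts on $\lambda$-lengths by the Ptolemy relation $\lambda_{13}\lambda_{24}=\lambda_{12}\lambda_{34}+\lambda_{14}\lambda_{23}$, so in $\lambda$-coordinates the action is a coordinate permutation followed by a finite composition of Ptolemy transformations.
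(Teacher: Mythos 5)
The paper offers no proof of Theorem \ref{Penthrm}: it is quoted from Penner's book, where the $\lambda$-length homeomorphism is established via the light-cone/convex-hull parametrization in Minkowski $3$-space, whereas you reconstruct the decorated surface directly by gluing rigid decorated ideal triangles along horocycle-matched edges --- essentially the technique Penner and Fock use for the shear-coordinate statement, here applied so as to yield the $\lambda$-length statement as well. Your route is more elementary and self-contained, and it makes the remaining bookkeeping transparent (the $h$-map as a monomial change of variables, kernel of the log-linear shear map equal to the decoration directions, image equal to the vanishing-cusp-sum subspace by a dimension count, and the MCG action via Whitehead moves and Ptolemy transformations); Penner's Minkowski construction buys additional structure (the convex-hull cell decomposition of $\caD\caT$, naturality of $\lambda$-lengths) that this statement does not require. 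One inference in your sketch should be repaired: ``the holonomy around a puncture is a product of the triangle-gluing isometries, each fixing the common ideal endpoint, hence is a parabolic'' is not valid as written --- an isometry of $\mathH$ fixing an ideal point may perfectly well be hyperbolic, and this is precisely how incomplete structures arise from general shear gluings, a point the paper itself emphasizes. Parabolicity here comes from the horocycle matching: since consecutive decorated triangles are glued so that their horocycles agree along the shared edge, the developed horocyclic segments around the puncture all lie on a single horocycle centered at the fixed ideal point, which the holonomy therefore preserves; a nontrivial isometry fixing an ideal point and preserving a horocycle centered there is parabolic, with translation length the total $h$-length, exactly as you assert. This is the decorated counterpart of the vanishing-shear-sum condition you invoke later for completeness; with that step made explicit, your outline is correct.
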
  

The WP K\"{a}hler form pulls back to the decorated Teichm\"{u}ller space and has a universal expression in terms of $\lambda$ and $h$ lengths.  We present new formulas for the pullback in Section \ref{results}.
\begin{theorem}\textup{\cite[Chap. 2, Theorem 3.1]{Penbk}.}\label{Penlambda}  For an ideal triangulation $\Delta$, the pullback WP K\"{a}hler form on $\caD\caT$ is
\[
\widetilde{\omega_{WP}}\,=\,\sum_{\Delta}
\widetilde{\lambda_a}\wedge\widetilde{\lambda_{b\,}}+\widetilde{\lambda_{b\,}}\wedge\widetilde{\lambda_{c\,}}+\widetilde{\lambda_{c\,}}\wedge\widetilde{\lambda_{a}},
\]
where the sum is over ideal triangles, $\widetilde{\lambda_*}=d\log\lambda_*$ and the individual triangles have sides $a,b$ and $c$ in clockwise order.
\end{theorem}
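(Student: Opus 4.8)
The plan is to reduce the identity to two assertions: that the right-hand side does not depend on the chosen ideal triangulation $\Delta$, and that the resulting intrinsic $2$-form is the pullback of $\omega_{WP}$. Denote the right-hand side by $\omega_\Delta$. It is closed, being a sum of wedges of the exact forms $\widetilde{\lambda_*}=d\log\lambda_*$; it is assembled only from the combinatorics of $\Delta$ together with the $\lambda$-length coordinates of Theorem \ref{Penthrm}; and once triangulation-independence is established it is automatically $\operatorname{MCG}$-invariant.

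First I would prove triangulation-independence. By Whitehead's theorem and \cite[Chap.~2, Lemma~1.4]{Penbk}, any two ideal triangulations with common vertices are joined by a finite chain of diagonal exchanges in quadrilaterals, so it suffices to treat a single exchange. Replacing the diagonal $e$ of Figure \ref{fig:diamond} by the second diagonal $e'$ changes only the two triangles bounded by $e$; every other summand of $\omega_\Delta$ is untouched. Differentiating the Ptolemy relation $\lambda_{13}\lambda_{24}=\lambda_{12}\lambda_{34}+\lambda_{14}\lambda_{23}$ (with $\lambda_{13}=\lambda_e$, $\lambda_{24}=\lambda_{e'}$, and $\lambda_{12},\lambda_{34},\lambda_{14},\lambda_{23}$ the quadrilateral sides) expresses $d\log\lambda_e$ through $d\log\lambda_{e'}$ and the four boundary differentials; substituting into the pre-exchange two-triangle contribution and comparing with the post-exchange one reduces the claim to an elementary algebraic identity. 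The substance is the careful bookkeeping of the clockwise side-orderings in the triangles involved and the attendant signs; I expect this combinatorial step to be the principal obstacle.

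Granting this, $\omega_\Delta$ is a well-defined, closed, $\operatorname{MCG}$-invariant $2$-form on $\caD\caT$, and the next step is to see that it is pulled back from $\caT$ along the forgetful map $\pi\colon\caD\caT\to\caT$. The generator $N_v$ of horocycle scaling at a cusp $v$ acts on $\lambda$-lengths by $\iota_{N_v}d\log\lambda_\alpha=\tfrac12 m_\alpha(v)$, where $m_\alpha(v)\in\{0,1,2\}$ counts the endpoints of $\alpha$ at $v$. Contracting $N_v$ into each triangle summand and summing over the triangle corners at $v$: the edges emanating from $v$ close into a cyclic list of vertex sectors, and the resulting expression telescopes around this cycle to zero, so $\iota_{N_v}\omega_\Delta=0$. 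Hence $\omega_\Delta=\pi^{*}\eta$ for a closed $2$-form $\eta$ on $\caT$, matching the pullback structure of $\widetilde{\omega_{WP}}$ (and $\eta$ is nondegenerate, for instance because a suitable power is Penner's volume form).

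It remains to identify $\eta$ with $\omega_{WP}$. Here I would pass to the shear coordinates $\sigma_\alpha$ of Theorem \ref{Penthrm}: by (\ref{shear}) the pullback of each $d\sigma_e$ is an explicit integral combination of the $d\log\lambda_*$, so $\pi^{*}$ of any $2$-form built from the $d\sigma_\alpha$ is a combination of the $\widetilde{\lambda_*}\wedge\widetilde{\lambda_*}$. Applying this to the Thurston symplectic form $\omega_{Th}$ of the triangulation — which has a triangle-local shear expression — and reorganizing the resulting sum over triangles should yield $\omega_\Delta$ up to a universal constant; the Bonahon--S\"ozen and Papadopoulos--Penner identifications \cite{BonSoz,PapPen} of $\omega_{WP}$ with a fixed multiple of $\omega_{Th}$ then fix the stated normalization. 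Alternatively, following Penner, one computes $\omega_{WP}$ in $\lambda$-coordinates directly from Fenchel--Nielsen coordinates and Wolpert's formula \cite{Wlcbms}, using the twist--length duality (\ref{wpdual}) to verify the absence of an overall constant.
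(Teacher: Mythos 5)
First, a point of comparison: the paper does not prove this statement at all --- it is quoted from Penner \cite[Chap. 2, Theorem 3.1]{Penbk}, with only the normalization adjustment of \cite[\S 5]{Wlcusps} noted --- so there is no internal argument to match your sketch against. Judged on its own terms, your first two steps are reasonable consistency checks: invariance of the right-hand side under a single diagonal exchange via the differentiated Ptolemy relation, and vanishing of the contraction with the horocycle-scaling fields $N_v$ (using $\iota_{N_v}d\log\lambda_\alpha=\tfrac12 m_\alpha(v)$), hence basicness for $\caD\caT\rightarrow\caT$; the latter does require care when an edge has both ends at the same cusp or a triangle has repeated sides, since the asserted telescoping depends on the triangles' clockwise orders being induced consistently by the surface orientation.

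The genuine gap is your third step, and it carries all the weight: closed, MCG-invariant, basic $2$-forms on $\caD\caT$ are far from unique, so steps one and two identify nothing, and the tools you invoke for the identification do not support it. The Papadopoulos--Penner $h$-length formula and their identification of $2\widetilde{\omega_{WP}}$ with the Thurston form are themselves deduced from Penner's $\lambda$-length formula (exactly as described before Corollary \ref{hform1}), so quoting \cite{PapPen} here is circular; the Bonahon--S\"{o}zen theorem \cite{BonSoz} is stated for compact surfaces and maximal geodesic laminations, and transferring it to the cusped, decorated setting with ideal triangulations is precisely the kind of statement that needs the doubling/opening-cusps analysis carried out in this paper rather than an off-the-shelf citation. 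Moreover, because the $d\sigma_e$ satisfy the linear relations around cusps, a ``triangle-local shear expression'' of the Thurston form does not single out a unique $2$-form in the $\widetilde{\lambda_*}$, so the proposed reorganization is not a routine formal step; and the overall constant is exactly the delicate point (the paper already rescales Penner's factor of $2$). Your fallback --- ``following Penner, compute $\omega_{WP}$ in $\lambda$-coordinates from Fenchel--Nielsen coordinates and Wolpert's formula'' --- is not an alternative verification but is Penner's actual proof, a nontrivial limiting computation that the proposal leaves entirely unexecuted. As written, the argument reduces the theorem to its hardest ingredient without proving it.
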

\noindent The formula is given without Penner's initial $2$ factor following the adjustment to our own formulas as detailed in \cite[$\S$5]{Wlcusps}.  

Papadopoulos-Penner establish a formula for the pullback $\widetilde{\omega_{WP}}$ in terms of $h$-lengths and describe identifications of spaces to establish that $2\widetilde{\omega_{WP}}$ coincides with Thurston's intersection form \cite{PapPen}.  Specifically the authors show that their change of variable $(\dag\dag)$ transforms their formula $(\dag)$ to the formula $(\dag\dag\dag)$; the calculation applies to the present setting by taking $\mu(greek\ index)=-\log h_{\widehat{index}}$ and $\mu(index)=\log \lambda_{index}$ and noting the factor of $2$.  
\begin{corollary}\textup{\cite{PapPen}.}\label{hform1}  For an ideal triangulation $\Delta$, the pullback WP K\"{a}hler form is
\[
\widetilde{\omega_{WP}}\,=\,\sum_{\Delta}
\widetilde{h_{\alpha}}\wedge\widetilde{h_{\beta}}+\widetilde{h_{\beta}}\wedge\widetilde{h_{\gamma}}+\widetilde{h_{\gamma}}\wedge\widetilde{h_{\alpha}},
\]
where the sum is over ideal triangles, $\widetilde{h_*}=d\log h_*$ and the individual triangles have vertex sectors $\alpha,\beta$ and $\gamma$ in clockwise order.
\end{corollary}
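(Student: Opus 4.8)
The plan is to deduce the $h$-length formula directly from Penner's $\lambda$-length expression for $\widetilde{\omega_{WP}}$ (Theorem \ref{Penlambda}) by the logarithmic change of coordinates relating $h$-lengths and $\lambda$-lengths. The key input is the per-triangle relation $h_{\hat a}=\lambda_a/(\lambda_b\lambda_c)$ recalled above: for the vertex sector $\hat a$ of an ideal triangle with opposite side $a$ and adjacent sides $b,c$, one has $\widetilde{h_{\hat a}}=\widetilde{\lambda_a}-\widetilde{\lambda_b}-\widetilde{\lambda_c}$, and cyclically for the other two sectors. Since each of these three identities involves only the three $\lambda$-lengths of the one triangle, the substitution into the sum of Theorem \ref{Penlambda} can be organized triangle by triangle.

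First I would carry out the elementary $2$-form computation inside a single ideal triangle. Writing $A=\widetilde{\lambda_a}$, $B=\widetilde{\lambda_b}$, $C=\widetilde{\lambda_c}$ for the three sides in clockwise order, each of the three sector forms $\widetilde{h_\alpha},\widetilde{h_\beta},\widetilde{h_\gamma}$ is the opposite-side form minus the two adjacent-side forms; one substitutes these into $\widetilde{h_\alpha}\wedge\widetilde{h_\beta}+\widetilde{h_\beta}\wedge\widetilde{h_\gamma}+\widetilde{h_\gamma}\wedge\widetilde{h_\alpha}$ and expands by anticommutativity. The diagonal terms vanish, the cross terms combine in pairs, and the cyclic symmetry $A\to B\to C\to A$ — which permutes the three sectors cyclically and hence fixes the cyclic sum — collapses the result to a constant multiple of $A\wedge B+B\wedge C+C\wedge A$, i.e. of the clockwise triangle term of Theorem \ref{Penlambda}. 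Summing over the ideal triangles of $\Delta$ and quoting Theorem \ref{Penlambda} then gives the stated formula, once the overall normalization is pinned down. Equivalently — and this is the route indicated in the discussion preceding the statement — one invokes the Papadopoulos-Penner identity \cite{PapPen}: their change of variables $(\dag\dag)$ transforms their edge formula $(\dag)$ into their corner formula $(\dag\dag\dag)$, and under the dictionary $\mu(\text{sector})=-\log h$, $\mu(\text{edge})=\log\lambda$ the formula $(\dag)$ is precisely Penner's $\lambda$-length expression of Theorem \ref{Penlambda}, $(\dag\dag\dag)$ is the asserted $h$-length expression (the two minus signs in $\mu(\text{sector})$ cancelling in each wedge), and $(\dag\dag)$ is nothing but the relation $h_{\hat a}=\lambda_a/(\lambda_b\lambda_c)$; the factor of $2$ separating Penner's normalization of $\omega_{WP}$ from the present one is then accounted for.

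The part requiring care is the combinatorial and normalization bookkeeping rather than any analytic difficulty. One must match the clockwise cyclic order of the sides $a,b,c$ of each triangle with the clockwise cyclic order of its vertex sectors $\alpha,\beta,\gamma$, correctly identify for each sector which two of the triangle's $\lambda$-lengths are the adjacent ones entering its $h$-length, and keep track of the numerical factor — this is where the factor of $2$ enters — so that the per-triangle substitution, summed over $\Delta$ with each ideal geodesic shared by two triangles while each vertex sector belongs to a single triangle, reproduces $\widetilde{\omega_{WP}}$ with the correct coefficient. Given the $h$-$\lambda$ relation and Theorem \ref{Penlambda}, the remaining verification is the routine wedge-product expansion sketched above together with this orientation and normalization check.
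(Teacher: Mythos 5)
Your fallback route---invoking the Papadopoulos--Penner change of variables $(\dag\dag)$ with the dictionary $\mu(\mathrm{sector})=-\log h$, $\mu(\mathrm{edge})=\log\lambda$---is exactly the paper's proof: the corollary is established there by citation to \cite{PapPen} together with that dictionary and the remark about the factor of $2$. The gap is in your primary route, the direct per-triangle substitution into Theorem \ref{Penlambda}. The ``constant multiple'' that you leave unevaluated is $4$, not $1$, and there is no residual normalization freedom to absorb it, since Theorem \ref{Penlambda} and the corollary are both stated with definite coefficient $1$ and with matching conventions (clockwise side order $a,b,c$ puts the opposite sectors $\hat a,\hat b,\hat c$ in clockwise cyclic order as well; at worst a disputed orientation changes the sign, never the magnitude). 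Indeed, from $\widetilde{h}_{\hat a}=\widetilde{\lambda_a}-\widetilde{\lambda_b}-\widetilde{\lambda_c}$ and its cyclic versions one gets $\widetilde{h}_{\hat a}+\widetilde{h}_{\hat b}=-2\,\widetilde{\lambda_c}$, hence
\[
\widetilde{h}_{\hat a}\wedge\widetilde{h}_{\hat b}\,=\,\widetilde{h}_{\hat a}\wedge\bigl(\widetilde{h}_{\hat a}+\widetilde{h}_{\hat b}\bigr)\,=\,2\bigl(\widetilde{\lambda_b}\wedge\widetilde{\lambda_c}+\widetilde{\lambda_c}\wedge\widetilde{\lambda_a}\bigr),
\]
and cyclically, so that in each ideal triangle
\[
\widetilde{h}_{\hat a}\wedge\widetilde{h}_{\hat b}+\widetilde{h}_{\hat b}\wedge\widetilde{h}_{\hat c}+\widetilde{h}_{\hat c}\wedge\widetilde{h}_{\hat a}\,=\,4\,\bigl(\widetilde{\lambda_a}\wedge\widetilde{\lambda_b}+\widetilde{\lambda_b}\wedge\widetilde{\lambda_c}+\widetilde{\lambda_c}\wedge\widetilde{\lambda_a}\bigr).
\]
Carried out honestly, your substitution argument therefore proves that the displayed $h$-length expression equals $4$ times the right-hand side of Theorem \ref{Penlambda}; it does not deliver the corollary at the stated coefficient, and ``pinning down the overall normalization'' is not a routine afterthought but the entire content of the step.

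This is precisely why the paper does not argue by substitution: the coefficient bookkeeping lives inside Papadopoulos--Penner's own formulas $(\dag)$ and $(\dag\dag\dag)$ and their normalization of $\omega_{WP}$ (Penner's extra factor of $2$, versus the adjustment $2\omega_{WP}(\ ,t_*)=d\ell_*$ of \cite{Wlcusps} used here), and the paper defers all of it to the cited calculation ``noting the factor of $2$.'' Your sketch of that route asserts, without verification, that $(\dag)$ is literally the formula of Theorem \ref{Penlambda} and $(\dag\dag\dag)$ is literally the displayed $h$-length formula with the same coefficients; the computation above shows those two identifications cannot both hold on the nose, so the dictionary necessarily carries suppressed numerical factors. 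To make your proposal a proof you must either track the Papadopoulos--Penner coefficients explicitly through the change of variables, or else confront the factor $4$ produced by the direct substitution and reconcile it with the stated normalization of $\widetilde{\omega_{WP}}$; as written, the proposal does neither.
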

\noindent In particular the $\lambda$ to $h$ change of coordinates is pre symplectic.

Papadopoulos and Penner introduce the formal Poincar\'{e} dual of an ideal triangulation.  The formal dual is a trivalent graph with an orientation for the edges at a vertex.  A modification of the trivalent graph is a punctured null gon train track.  A set of logarithms of $\lambda$-lengths corresponds to a measure on the train track.  A modification of the construction of a measured foliation from a measured train track parameterizes the space $\caD\caM\caF$ of decorated measured foliations.  

\begin{theorem} \textup{\cite[Proposition 4.1]{PapPen}}.  The train track parameterization provides a homeomorphism of $\caD\caT$ to $\caD\caM\caF$.  The homeomorphism identifies twice the pullback WP K\"{a}hler form and the Thurston intersection form  $2\,\widetilde{\omega_{WP}}=\tau$.
\end{theorem}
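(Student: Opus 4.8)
The assertion combines a parameterization statement with a symplectic identification, and I would prove each by reducing to coordinates already in hand. For the homeomorphism I would factor the train track parameterization through Penner's $\lambda$-length chart. By Theorem~\ref{Penthrm} the $\lambda$-lengths for the fixed ideal triangulation $\Delta$ give a real-analytic homeomorphism $\caD\caT \to \mathbb R_{>0}^{\Delta}$; post-composing with $\log$ produces a homeomorphism onto $\mathbb R^{\Delta}$. On the other side, the punctured null gon train track dual to $\Delta$ has one branch per edge of $\Delta$ and complementary regions that are once-punctured nullgons, and the point to check is that its transverse-measure cone is all of $\mathbb R^{\Delta}$, with the modified measured-train-track-to-measured-foliation construction giving a homeomorphism $\mathbb R^{\Delta} \to \caD\caM\caF$. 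Composing, $\caD\caT \to \mathbb R_{>0}^{\Delta} \xrightarrow{\log} \mathbb R^{\Delta} \to \caD\caM\caF$ is the desired homeomorphism, sending a decorated surface $R$ to the decorated measured foliation with branch weights $\{\log\lambda_e(R)\}_{e\in\Delta}$.

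For the symplectic statement I would compute the Thurston intersection form $\tau$ in the branch-weight coordinates $\{\log\lambda_e\}$ and match it with Penner's formula. The form $\tau$ on a measured train track is a sum of local contributions at the switches, each a signed bilinear expression in the weights of the branches meeting there, the signs governed by their cyclic order; after the modification to the null gon track these data organize one packet per trivalent vertex of the dual, that is, one per ideal triangle, with the three relevant branches being the sides $a,b,c$ of the triangle. By Theorem~\ref{Penlambda}, $\widetilde{\omega_{WP}} = \sum_{\Delta}(\widetilde{\lambda_a}\wedge\widetilde{\lambda_b} + \widetilde{\lambda_b}\wedge\widetilde{\lambda_c} + \widetilde{\lambda_c}\wedge\widetilde{\lambda_a})$ with $\widetilde{\lambda_*}=d\log\lambda_*$, also a sum over triangles. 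The core step is then to verify that the local contribution of $\tau$ at the switch packet of a triangle with sides $a,b,c$ in clockwise order equals $2(\widetilde{\lambda_a}\wedge\widetilde{\lambda_b} + \widetilde{\lambda_b}\wedge\widetilde{\lambda_c} + \widetilde{\lambda_c}\wedge\widetilde{\lambda_a})$; summing over the $4g-4+2n$ triangles gives $\tau = 2\widetilde{\omega_{WP}}$. This is exactly the Papadopoulos--Penner computation that their change of variable $(\dag\dag)$ carries $(\dag)$ to $(\dag\dag\dag)$, read with branch weights equal to $\log\lambda$; equivalently one may run the identical calculation through the once-punctured nullgon regions using Corollary~\ref{hform1} and branch weights $-\log h$, which is the form in which that computation is literally written.

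I expect the obstacle to be bookkeeping rather than ideas. One must pin down the modification that turns the tripod dual of $\Delta$ into the punctured null gon train track so that each $\Delta$-edge becomes a single branch carrying an unconstrained weight, and then track the induced cyclic orientations at the switches carefully enough to recover Penner's clockwise ordering of triangle sides and, in particular, the numerical factor of $2$ relating $\tau$ and $\widetilde{\omega_{WP}}$; one must also confirm that the once-punctured nullgon complementary regions contribute no stray terms to $\tau$, equivalently that the decoration data exactly accounts for what happens near the punctures. Once the coordinates above are fixed, both $\tau$ and $2\widetilde{\omega_{WP}}$ are constant-coefficient $2$-forms in the $d\log\lambda_e$, so the symplectic identification reduces to the equality of two antisymmetric matrices of constants --- a finite, purely combinatorial check.
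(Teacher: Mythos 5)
This theorem is quoted background: the paper gives no proof of it, citing Papadopoulos--Penner \cite{PapPen}; the only related material in the text is the remark preceding Corollary \ref{hform1} that their change of variable $(\dag\dag)$ carries $(\dag)$ to $(\dag\dag\dag)$, with the factor of $2$ adjustment. Your outline follows the same route as the cited source and as the paper's surrounding discussion --- factor the parameterization through the $\lambda$-length chart of Theorem \ref{Penthrm}, pass to $\log\lambda$ (equivalently $-\log h$) as branch weights on the dual punctured-nullgon track, and compare $\tau$ switch-packet by switch-packet with the Theorem \ref{Penlambda} expression for $\widetilde{\omega_{WP}}$ --- so there is no divergence of method to report. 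The only caveat is that the two steps you explicitly defer are exactly the substance of the cited proposition: that the punctured-nullgon modification makes every point of $\mathbb R^{\Delta}$ a realizable weight system and that the resulting weights-to-decorated-foliation map is a homeomorphism (this is where negative weights and the behavior at the punctures must be handled, not merely ``confirmed''), and the local computation at a triangle's switches that yields the constant $2$. As written your text is a faithful plan for the Papadopoulos--Penner argument rather than a self-contained proof, which is consistent with the paper's own treatment of the statement as a quoted result.
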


\section{Thurston shears as limits of opposing twists}\label{Thuroppos}

We show that weighted Fenchel-Nielsen twists with twist lines orthogonal to short geodesics converge to a Thurston shear deformation on ideal geodesics, as the short lengths tend to zero.  We begin with the collars and cusp description \cite{Busbook}.   
For a closed geodesic $\alpha$ on the surface $R$ of length $\lla$, normalize the universal covering for the corresponding deck transformation to be $z\rightarrow e^{\lla}z$.   The collar $\mathcal C(\alpha)=\{\lla/2\le\arg z\le \pi -\lla/2\}/\langle z\rightarrow e^{\lla}z\rangle$ embeds into $R$ with $\alpha$ the core geodesic.  For a cusp, normalize the universal covering for the corresponding deck transformation to be $z\rightarrow z+1$.   The cusp region $\mathcal C_{\infty}=\{\Im z\ge 1/2\}/\langle z\rightarrow z+1\rangle$ embeds into $R$.  The collars about short geodesics and cusp regions are mutually disjoint in $R$.  

In the universal cover a Fenchel-Nielsen twist deformation for a single geodesic line $\beta$ is the piecewise isometry self map of $\mathbb H$ with jump discontinuity across $\beta$ given by a hyperbolic transformation stabilizing $\beta$.  A twist deformation of magnitude $t$ offsets the $\beta$ half planes by a relative $t$ units to the right, as measured when crossing $\beta$.  The relative displacement of a combination of twists on disjoint lines is found as follows.  For the displacement of $q$ relative to $p$, consider the twist lines separating $p$ and $q$ (for neither point on a twist line).  There is a partial ordering of lines based on containment of half planes containing $p$.  By definition the $(n+1)^{st}$ line contains the preceding $n$ lines in a common half plane with $p$. The individual twist deformations are normalized to fix $p$.  The combined deformation map of $\mathbb H$ is given by left (post) composition of the individual deformations formed in the order of the lines.  A basic property is that the Fenchel-Nielsen twists on a set of disjoint lines is a commutative group.     

A finite collection of disjoint closed geodesics on a surface $R$ lifts to a locally finite collection in $\mathbb H$ and an equivariant twist mapping is determined on relatively compact sets.  For our purposes it suffices to analyze finite combinations of twists in $\mathbb H$. 

We begin with hyperbolic cylinders and cusp regions.  

\begin{definition} For a hyperbolic cylinder with core geodesic $\gamma$, an opposing twist is a finite combination of weighted Fenchel-Nielsen twists with twist lines orthogonal to $\gamma$ and vanishing magnitude sum.   For a hyperbolic cusp region, a Thurston shear is a finite combination of weighted Fenchel-Nielsen twists with twist lines asymptotic at the cusp and with vanishing magnitude sum.
\end{definition} 

A positive shear corresponds to a right earthquake.  For a Thurston shear an initial piecewise horocycle orthogonal to the twist lines with successive displacements given by the negative weights is deformed to a closed horocycle.  The deformed region is complete hyperbolic with a closed horocycle, consequently is a cusp region. The vanishing magnitude sum condition is required for completeness of the deformed structure.  The condition is noted in \cite[\S 12.3]{Bonshear} and considered in detail in \cite[Chap. 2, \S 4]{Penbk}.  

\begin{lemma}\label{oppbd} The opposing twist deformation of a hyperbolic cylinder is a hyperbolic cylinder.   The core length of the deformed cylinder is bounded uniformly in terms of the initial core length and the twist weights.  For a bounded number of bounded weights, the deformed core length is small uniformly as the initial core length is small.
\end{lemma}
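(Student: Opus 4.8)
The plan is to compute the deformed holonomy of the core geodesic explicitly and to deduce all three statements from one trace identity. First I would normalize the universal cover so that the core deck transformation is $A\colon z\mapsto e^{\ell_\gamma}z$, with axis the positive imaginary axis. Since the twist lines are orthogonal to $\gamma$, they lift to Euclidean semicircles $\{|z|=r\}$ centred at $0$, and after a scaling (which commutes with $A$) one may take one lift of each of the $k$ disjoint twist curves at $|z|=r_i$ with $1=r_1<r_2<\dots<r_k<e^{\ell_\gamma}$ and weights $u_1,\dots,u_k$, where ``opposing'' means $\sum_i u_i=0$. By the composed-twist description of \S\ref{Thuroppos} the opposing twist deforms $\langle A\rangle$ to $\langle A'\rangle$, and a bookkeeping with the twist transformations $T_{r_i,u_i}$ (the hyperbolic with axis $\{|z|=r_i\}$ and translation length $u_i$) crossed along one lift of the core loop gives $\operatorname{tr}A'=\operatorname{tr}\bigl(T_{r_1,u_1}\cdots T_{r_k,u_k}\,A\bigr)$ up to conjugation. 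Writing $T_{r,t}=E_rVD_tV^{-1}E_r^{-1}$ with $E_r=\operatorname{diag}(\sqrt r,1/\sqrt r)$, $V$ the rotation by $\pi/4$ and $D_t=\operatorname{diag}(e^{t/2},e^{-t/2})$, and using $V^{-1}D_\delta V=U_{-\delta}:=\left(\begin{smallmatrix}\cosh(\delta/2)&-\sinh(\delta/2)\\-\sinh(\delta/2)&\cosh(\delta/2)\end{smallmatrix}\right)$, cyclic invariance of the trace collapses the conjugating factors and yields
\[
\operatorname{tr}A'\;=\;\operatorname{tr}\bigl(D_{u_1}U_{-\delta_1}D_{u_2}U_{-\delta_2}\cdots D_{u_k}U_{-\delta_k}\bigr),
\]
where $\delta_i:=\log(r_{i+1}/r_i)>0$ for $i<k$ and $\delta_k:=\ell_\gamma-\log r_k>0$, so $\delta_1,\dots,\delta_k>0$ with $\sum_i\delta_i=\ell_\gamma$ while still $\sum_i u_i=0$. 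Pinning down the exact order and orientations of the individual twists in this reduction is the step I expect to require genuine care; however only the positivity of the $\delta_i$, the identity $\sum_i\delta_i=\ell_\gamma$, and $\sum_i u_i=0$ are used below, so the exact interleaving of the factors is immaterial for the conclusions.

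For the first assertion I would observe that each factor $D_uU_{-\delta}$ with $\delta>0$ has strictly positive diagonal and strictly negative off-diagonal entries, and that this sign pattern is preserved under multiplication of such matrices. Hence the product $P:=D_{u_1}U_{-\delta_1}\cdots D_{u_k}U_{-\delta_k}=\left(\begin{smallmatrix}a&-b\\-c&d\end{smallmatrix}\right)$ with $a,b,c,d>0$; since $\det P=1$ we get $ad=1+bc>1$, so $\operatorname{tr}A'=a+d\ge 2\sqrt{ad}>2$. Thus $A'$ is hyperbolic, $\langle A'\rangle$ is discrete, and $\mathH/\langle A'\rangle$ is a complete hyperbolic cylinder whose core has length $\ell_{\gamma'}$ determined by $2\cosh(\ell_{\gamma'}/2)=\operatorname{tr}A'$; the opposing twist realizes this structure on the given cylinder.

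For the remaining assertions I would use that $D_u$ and $U_{-\delta}$ are symmetric, with operator norms $e^{|u|/2}$ and $e^{\delta/2}$. Then $\|P\|\le e^{\frac12\sum_i|u_i|}\,e^{\ell_\gamma/2}$, and $2\cosh(\ell_{\gamma'}/2)=\operatorname{tr}A'\le 2\|P\|$ bounds $\ell_{\gamma'}$ in terms of $\ell_\gamma$ and $\sum_i|u_i|$, giving the second assertion. For the third, $\sum_i u_i=0$ gives $\prod_i D_{u_i}=I$, so telescoping $P-\prod_i D_{u_i}$ over the factors $D_{u_j}(U_{-\delta_j}-I)$, together with $\|U_{-\delta_j}-I\|=e^{\delta_j/2}-1\le \delta_j e^{\delta_j/2}$, yields $\|P-I\|\le e^{\frac12\sum_i|u_i|}e^{\ell_\gamma/2}\sum_j\delta_j=e^{\frac12\sum_i|u_i|}e^{\ell_\gamma/2}\,\ell_\gamma$. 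Hence $|\operatorname{tr}A'-2|\le 2\|P-I\|$, and from $\cosh x-1\ge x^2/2$ one gets $\ell_{\gamma'}\le 2\sqrt2\,e^{\frac14\sum_i|u_i|}e^{\ell_\gamma/4}\,\ell_\gamma^{1/2}$; for a bounded number of bounded weights the right side tends to $0$ with $\ell_\gamma$, uniformly, which is the third assertion.
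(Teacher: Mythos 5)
Your proposal is correct in substance but takes a genuinely different route from the paper. The paper argues qualitatively: the twist lines cut the cylinder into bands between ultraparallel complete geodesics, the reglued structure is complete and contains such bands, hence is a hyperbolic cylinder; boundedness and smallness of the deformed core length are then obtained softly, by noting that for small core length the twist lines concentrate near $|z|=1$ and the vanishing weight sum forces the composed twist map to be uniformly close to the identity, the deformed core length being the translation length of $z\mapsto e^{\ell_\gamma}z$ conjugated by the opposing twists. You instead compute the deformed holonomy explicitly as a product $P=D_{u_1}U_{-\delta_1}\cdots D_{u_k}U_{-\delta_k}$ with $\delta_i>0$, $\sum_i\delta_i=\ell_\gamma$, and your three steps check out: the sign pattern (positive diagonal, negative off-diagonal) is indeed stable under multiplication and with $\det P=1$ gives $\operatorname{tr}A'>2$; the eigenvalues of $U_{-\delta}$ are $e^{\pm\delta/2}$, so $\|U_{-\delta}\|=e^{\delta/2}$ and $\|U_{-\delta}-I\|=e^{\delta/2}-1\le\delta e^{\delta/2}$, giving your norm and telescoping bounds (the opposing condition entering only through $\prod_iD_{u_i}=I$). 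This buys an explicit quantitative estimate $\ell_{\gamma'}\le 2\sqrt2\,e^{\frac14\sum|u_i|}e^{\ell_\gamma/4}\ell_\gamma^{1/2}$, sharper than the paper's soft statement, and it makes transparent that hyperbolicity of the holonomy holds for arbitrary weights, the balanced condition being needed only for the third assertion. Your caveat about the exact interleaving of factors is fair and, as you say, harmless, since only positivity of the $\delta_i$, $\sum_i\delta_i=\ell_\gamma$ and $\sum_iu_i=0$ are used.

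The one point you assert rather than prove is precisely the point the paper's proof is devoted to: that the deformed structure on the cylinder is complete, so that it is the quotient $\mathbb H/\langle A'\rangle$ and its core length is the translation length of $A'$. Hyperbolicity of the holonomy by itself would not identify an incomplete deformed annulus with a hyperbolic cylinder, so the sentence ``the opposing twist realizes this structure on the given cylinder'' needs a justification. It is easy to supply: the twist lines remain complete geodesics in the deformed structure and consecutive ones bound bands of positive width (the bands are isometric to the undeformed ones), so a path of finite length crosses the lines only finitely often and Cauchy sequences eventually stay in a single closed band, each of which is complete; alternatively quote the paper's band decomposition. With that addition, your computation distinguishes the hyperbolic cylinder from the cusp cylinder and yields all three assertions.
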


\begin{proof}  Opposing twist lines decompose a cylinder into bands, each isometric to a region between ultra parallel lines in $\mathbb H$.  The twist deformation is given by translations across lines.  The vanishing magnitude sum provides that a deformed cylinder is complete hyperbolic containing ultra parallel bands, consequently is a hyperbolic cylinder.  

We observe that for disjoint weighted twist lines converging, Fenchel-Nielsen twists (normalized with a common fixed region) converge.  For a core length $\ell$, collar twist lines are represented in the band $\{1\le |z|<e^{\ell}\}$ in $\mathbb H$.  For $\ell$ small, the individual twists are close to the twist line $|z|=1$.  
The magnitude sum vanishing provides that for $\ell$ small the combined twist transformation is close to the identity.  In particular for twist weights bounded on a compact set the opposing twist is close to the identity uniformly in $\ell$.  The deformed core length is the translation length of $z\rightarrow ze^{\ell}$ conjugated by the opposing twists. The deformed core length is uniformly small in $\ell$, as desired.  
\end{proof}

Next we make precise the notion of opposing twists converging to a Thurston shear and also note a consequence.   

\begin{definition}
Opposing twists for a sequence of cylinders with core lengths tending to zero geometrically converge to a Thurston shear provided the following.  First, the universal coverings are normalized with the hyperbolic deck transformations for the cylinders converging in the compact open topology for $\mathbb H$ to the parabolic deck transformation for the cusp region.  Second, for a relatively compact open set $K$ in $\mathbb H$ whose projection to the cusp region contains a loop encircling the cusp, the intersection with $K$ of the weighted twist lines for the cylinders converges to the intersection with the weighted Thurston shear lines.
\end{definition}

\begin{figure}[htbp] 
  \centering
  \includegraphics[bb=0 0 538 263,width=3.25in,height=2.8in,keepaspectratio]{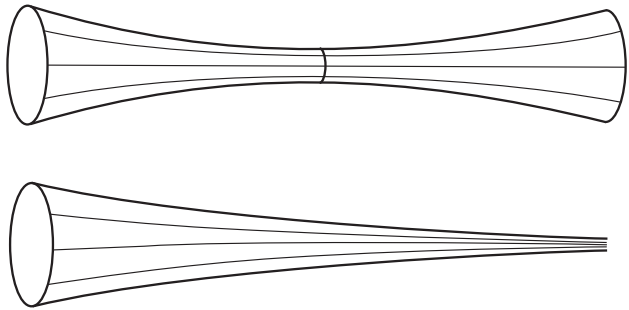}
  \caption{A hyperbolic cylinder with geodesics orthogonal to the core geodesic and a cusp region with geodesics asymptotic at the cusp.}
  \label{fig:collarscusps}
\end{figure}

\begin{lemma}\label{oppconv} Consider hyperbolic cylinders converging to a cusp region with opposing twists geometrically converging to a Thurston shear.  A normalization by $\mathbb H$ isometries of the twist deformation maps of $\mathbb H$ converges to the Thurston shear in the compact open topology for $\mathbb H$.   
\end{lemma}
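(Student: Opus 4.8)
The plan is to pass to the limit in the explicit piecewise-isometry description of combined Fenchel--Nielsen twists recalled in Section~\ref{Thuroppos}, term by term. First I would fix a basepoint $p\in\mathbb H$ lying off every twist line and every shear line (possible for $n$ large, since these lines accumulate only at the cusp), and normalize each twist deformation map $\Phi_n$, and the Thurston shear map $\Phi_\infty$, to be the identity on the complementary region containing $p$; this selects a canonical representative from the isometry-equivalence class of each deformation, so that ``a normalization by $\mathbb H$ isometries'' is the composite of $\Phi_n$ with the isometry realizing this convention. With this normalization $\Phi_n=\tau_{j_{N_n}}\circ\cdots\circ\tau_{j_1}$, the lifted twist lines meeting a given compact set being enumerated by the partial order of ``$p$-side half-plane containment'' and $\tau_j$ being the elementary twist across the $j$-th line with its prescribed weight $t_j$; on each complementary region $D$ reached from $p$ by crossing $\beta_{j_1},\dots,\beta_{j_k}$ the map restricts to the hyperbolic isometry $T_{j_k}\cdots T_{j_1}$, with $T_j$ the translation along $\beta_j$ by signed distance $t_j$. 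The same description holds for $\Phi_\infty$ with the shear lines in place of the twist lines.

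Next I would establish convergence of the building blocks. By the geometric-convergence hypothesis, on the relatively compact set $K$ whose projection encircles the cusp the weighted twist lines converge to the weighted shear lines; since a geodesic of $\mathbb H$ is determined by its pair of ideal endpoints, the endpoints of the $n$-th twist lines converge to those of the limiting shear lines, and hence each elementary translation $T_{n,j}$ converges in $PSL(2,\mathbb R)$, and so uniformly on compacta of $\mathbb H$, to $T_{\infty,j}$, because the assignment (axis, signed translation length)$\,\mapsto\,$(isometry) is continuous. The second ingredient is a combinatorial stabilization: only a bounded number of lifted twist lines meet any fixed compact subset of $\mathbb H$, uniformly in $n$, and for $n$ large their arrangement relative to $p$ agrees with that of the limiting shear lines. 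Here I would invoke Lemma~\ref{oppbd}: the deformed core length is uniformly small and the combined twist transformation is uniformly close to the identity, which together with the disjointness of the twist lines prevents the $A_n$-orbits of twist lines from accumulating in $K$ as $\ell_n\to 0$. If several twist lines converge to a common shear line their weights add, and the vanishing-magnitude-sum condition, which as noted there is the completeness requirement, is preserved.

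Finally I would pass to the limit. Given a compact $K'\subset\mathbb H$, after enlarging $K$ and using $A_n$-equivariance of $\Phi_n$ together with $A_n\to A_\infty$ (and, for points outside the strip swept by the parabolic $A_\infty$, the elementary observation that only finitely many, eventually constant, translations are in play there), we may assume the twist and shear lines meeting $K'$ lie in the stabilized finite family. For $z\in K'$ off the limiting shear lines, $z$ lies for $n$ large in a complementary region of the same combinatorial type as the region containing it in the limit, so $\Phi_n(z)=(T_{n,j_k}\cdots T_{n,j_1})(z)\to(T_{\infty,j_k}\cdots T_{\infty,j_1})(z)=\Phi_\infty(z)$; and on any compact subset of $K'$ disjoint from the finitely many shear lines $\Phi_n$ agrees with a fixed member of the stabilized list of isometries, so the convergence there is uniform. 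Together with the convergence of the discontinuity loci, the twist lines to the shear lines, this is the asserted compact-open convergence of the twist deformation maps to the Thurston shear.

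The step I expect to be the main obstacle is the combinatorial control of the second paragraph: guaranteeing, uniformly in $n$, that only boundedly many lifted twist lines enter a compact set and that their partial order stabilizes, since a priori the $A_n$-orbits could become dense near the ideal point as $\ell_n\to 0$; the vanishing-magnitude-sum hypothesis and the uniform estimates of Lemma~\ref{oppbd} are precisely what rule this out. A secondary and more cosmetic point is that the twist maps are only piecewise isometries, with a jump across each twist line, so ``compact-open convergence'' must be read as uniform convergence on compacta of the complement of the limiting shear lines together with convergence of the lines themselves; this is automatic once the building-block convergence and the combinatorial stabilization are in hand.
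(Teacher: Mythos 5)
Your argument is essentially the paper's own proof, written out in full: the paper disposes of the lemma in two sentences, first noting that convergence of the weighted lines on the distinguished set $K$, propagated by the converging deck transformations, gives convergence of the lines on every compact set, and then citing the observation already made in the proof of Lemma \ref{oppbd} that convergence of disjoint weighted twist lines forces convergence of the twist maps normalized with a common fixed region. Your normalization at a basepoint $p$, the convergence of the elementary translations when axes and weights converge, and the finite ordered compositions on complementary regions are exactly the content of that observation, so the approach matches. One correction: the combinatorial stabilization you single out as the main obstacle should not be attributed to Lemma \ref{oppbd}, which is stated in the normalization fixing the core geodesic and controls only the deformed core length; it says nothing about accumulation of line lifts in a compact subset of $\mathbb H$. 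The non-accumulation and eventual stabilization of the finitely many lines meeting a given compact set is instead supplied by the geometric-convergence hypothesis itself together with equivariance: the weighted twist lines meeting $K$ converge to the locally finite weighted shear lines meeting $K$, the projection of $K$ contains a loop encircling the cusp so every twist-line lift lies in the $\langle A_n\rangle$-orbit of $K$, and $A_n$ converges to the limiting parabolic in the compact open topology; these facts give convergence of the weighted lines, with controlled number and weights, on any compact set, which is precisely the first sentence of the paper's proof. With that substitution your argument is correct and follows the paper's route.
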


\begin{proof} Convergence of lines intersecting a given relatively compact set in $\mathbb H$ provides convergence on any compact set.  As noted convergence of weighted lines in $\mathbb H$ provides that suitably normalized deformation maps converge in the compact open topology.
\end{proof}

\section{Chatauby convergence and opening cusps}\label{Chat}

The points of  Teichm\"{u}ller space $\mathcal T$ are equivalence classes 
$\{(R,f)\}$ of Riemann surfaces with reference homeomorphisms $f:F\rightarrow R$ from a reference surface.  The {\em complex of curves} $C(F)$ is defined as follows.  The vertices of
$C(F)$ are the free homotopy classes of homotopically nontrivial, non peripheral,
simple closed curves on $F$.  A $k$-simplex consists of $k+1$ homotopy classes of mutually disjoint simple closed curves.  For surfaces of genus $g$ and $n$ punctures, a maximal
set of mutually disjoint simple closed curves, a {\em partition},
has $3g-3+n$ elements.  The mapping class group acts on the complex $C(F)$.

The Fenchel-Nielsen coordinates for $\mathcal T$ are given in terms of geodesic-lengths and
lengths of auxiliary geodesic segments, \cite{Abbook,Busbook,ImTan}.  A partition
 $\mathcal{P}=\{\alpha_1,\dots,\alpha_{3g-3+n}\}$ decomposes the
reference surface $F$ into $2g-2+n$ components, each homeomorphic to
a sphere with a combination of three discs or points removed.  A homotopy marked
Riemann surface $(R,f)$ is likewise decomposed into pants by the geodesics
representing the elements of $\mathcal P$.  Each component pants, relative to its hyperbolic
metric, has a combination of three geodesic boundaries and cusps.  For each
component pants, the shortest geodesic segments connecting boundaries determine
 designated points  on each boundary.  For each geodesic $\alpha$ in the pants
decomposition, a twist parameter $\tau_{\alpha}$ is defined as the displacement along
the geodesic between designated points, one  for each side of the geodesic.
For marked Riemann surfaces close to an initial reference marked Riemann surface, the
displacement $\tau_{\alpha}$ is the distance between the designated points; in
general the displacement is the analytic continuation (the lifting) of the
distance measurement.  For $\alpha$ in $\mathcal P$ define the {\em
Fenchel-Nielsen angle} by $\vartheta_{\alpha}=2\pi\tau_{\alpha}/\ell_{\alpha}$.
The Fenchel-Nielsen coordinates for Teichm\"{u}ller space for the
decomposition $\mathcal P$ are
$(\ell_{\alpha_1},\vartheta_{\alpha_1},\dots,\ell_{\alpha_{3g-3+n}},\vartheta_{\alpha_{3g-3+n}})$.
The coordinates provide a real analytic equivalence of $\mathcal T$ to
$(\mathbb{R}_+\times \mathbb{R})^{3g-3+n}$, \cite{Abbook,Busbook,ImTan}.

A partial compactification, the {\em augmented Teichm\"{u}ller space} $\Tbar$, is introduced by extending the range of the Fenchel-Nielsen parameters.  The added points correspond to unions of hyperbolic surfaces with formal pairings of cusps.  The interpretation of {\em length
vanishing} is the key ingredient.  For an $\ell_{\alpha}$ equal to zero, the
angle $\vartheta_{\alpha}$ is not defined and in place of the geodesic for
$\alpha$ there appears a pair of cusps; the reference map $f$ is now a homeomorphism of
$F-\alpha$ to a union  of hyperbolic surfaces (curves parallel to $\alpha$
map to loops encircling the cusps).  The parameter space for a pair
$(\ell_{\alpha},\vartheta_{\alpha})$ will be the identification space $\mathbb{R}_{\ge
0}\times\mathbb{R}/\{(0,y)\sim(0,y')\}$.  More generally for the partition 
$\mathcal P$, a frontier set $\mathcal{T}(\mathcal P)$ is added to the
Teichm\"{u}ller space by extending the Fenchel-Nielsen parameter ranges: for
each $\alpha\in\mathcal{P}$, extend the range of $\ell_{\alpha}$ to include
the value $0$, with $\vartheta_{\alpha}$ not defined for $\ell_{\alpha}=0$.  The
points of $\mathcal{T}(\mathcal P)$ in general parameterize unions of Riemann
surfaces with each $\ell_{\alpha}=0,\,\alpha\in\mathcal{P},$ specifying a pair
of cusps. 

We present an alternate description of the frontier points in terms of representations of groups and the Chabauty topology.  A Riemann surface with punctures and hyperbolic metric is uniformized by a cofinite subgroup $\Gamma\subset PSL(2;\mathbb R)$.  A puncture corresponds to the $\Gamma$-conjugacy class of a maximal parabolic subgroup.  In general, a Riemann surface with punctures corresponds to the $PSL(2;\mathbb R)$ conjugacy class of a tuple $(\Gamma,\langle\Gamma_{01}\rangle ,\dots,\langle\Gamma_{0n}\rangle )$ where $\langle\Gamma_{0j}\rangle $ are the maximal parabolic classes and a labeling for punctures is a labeling for conjugacy classes.  A {\em Riemann surface with nodes} $R'$ is a finite collection of $PSL(2;\mathbb R)$ conjugacy classes of tuples 
$(\Gamma^\ast,\langle\Gamma_{01}^\ast\rangle ,\dots,\langle\Gamma_{0n^\ast}^\ast\rangle )$ with a formal pairing of certain maximal parabolic classes.  The conjugacy class of a tuple is called a {\em part} of $R'$.  The unpaired maximal parabolic classes are the punctures of $R'$ and the genus of $R'$ is defined by the relation $Total\ area=2\pi(2g-2+n)$.  A cofinite $PSL(2;\mathbb R)$ injective representation of the fundamental group of a surface is topologically allowable provided peripheral elements correspond to peripheral elements.  A point of the Teichm\"{u}ller space $\mathcal T$ is given by the $PSL(2;\mathbb R)$ conjugacy class of a topologically allowable injective cofinite representation of the fundamental group $\pi_1(F)\rightarrow\Gamma\subset PSL(2;\mathbb R)$.  For a simplex $\sigma$, a point of the corresponding frontier space $\mathcal T(\sigma)\subset\Tbar$ is given by a collection $\{(\Gamma^\ast,\langle\Gamma_{01}^\ast\rangle ,\dots,\langle\Gamma_{0n^\ast}^\ast\rangle )\}$ of tuples with: a bijection between $\sigma$ and the paired maximal parabolic classes; a bijection between the components $\{F_j\}$ of $F-\sigma$ and the conjugacy classes of parts $(\Gamma^j,\langle\Gamma_{01}^j\rangle ,\dots,\langle\Gamma_{0n^j}^j\rangle )$ and the $PSL(2;\mathbb R)$ conjugacy classes of topologically allowable isomorphisms $\pi_1(F_j)\rightarrow\Gamma^j$, \cite{Abdegn, Bersdeg}. We are interested in geodesic-lengths for a sequence of points of $\mathcal T$ converging to a point of $\mathcal T(\sigma)$.  The convergence of hyperbolic metrics provides that for closed curves of $F$ disjoint from $\sigma$, geodesic-lengths converge, while closed curves with essential $\sigma$ intersections have geodesic-lengths tending to infinity, \cite{Bersdeg, Wlhyp}.  

We refer to the Chabauty topology to describe the convergence for the $PSL(2;\mathbb R)$ representations.   Chabauty introduced a topology of geometric convergence for the space of discrete subgroups of a locally compact group, \cite{Chb}.   A neighborhood of $\Gamma\subset PSL(2;\mathbb R)$ is specified by a neighborhood $U$ of the identity in $PSL(2;\mathbb R)$ and a compact subset $K\subset PSL(2;\mathbb R)$.  A discrete group $\Gamma'$ is in the neighborhood $\mathcal N(\Gamma,U,K)$ provided $\Gamma'\cap K\subseteq\Gamma U$ and $\Gamma\cap K\subseteq\Gamma'U$.  The sets $\mathcal N(\Gamma,U,K)$ provide a neighborhood basis for the topology.   The $PSL(2;\mathbb R)$ topology coincides with the induced compact open topology for transformations of $\mathbb H$.  
Important for the present considerations is the following convergence characterization.  A sequence of points of $\mathcal T$ converges to a point of $\mathcal T(\sigma)$, provided for each component $F_j$ of $F-\sigma$, there exist $PSL(2;\mathbb R)$ conjugations such that restricted to $\pi_1(F_j)$ the corresponding representations converge element wise to $\pi_1(F_j)\rightarrow\Gamma^j$, \cite[Thrm. 2]{HrCh}.  

We now consider a Riemann surface $R$ with cusps and data $\sum\mathfrak b_j\widehat\beta_j$ for a Thurston shear.  The data is a weighted sum of disjoint simple ideal geodesics, geodesics with endpoints at infinity in the cusps.  The weighted sum of segments entering each cusp vanishes.  Double the surface across its cusps; consider the union of $R$ and its conjugate surface $\bar R$ with the reflection symmetry $\rho$ for the pair.  For the geodesic $\widehat\beta_j$,  we write $\beta_j$ for the union $\widehat\beta_j\cup\rho(\widehat\beta_j)$.  To open cusps, given $\epsilon$ positive, remove the area $\epsilon$ horoball at each cusp and glue the remaining surfaces by the map $\rho$ to obtain a compact surface $R_{\epsilon}$.  The surface $R_{\epsilon}$ has a reflection symmetry (also denoted $\rho$) and smooth simple closed curves obtained from surgering the $\beta_j$ (also denoted $\beta_j$).  The construction provides a homeomorphism from a reference surface $F$ to $R_{\epsilon}$ for $\epsilon$ positive and the simplex $\sigma$ of short curves for $F$ is given by the $\epsilon$ horocycles.  Standard comparison estimates for metrics provide that for the uniformization hyperbolic metric, the simplex is realized by short geodesics with lengths tending to zero with $\epsilon$.  The comparison estimates also provide that on the complement of prescribed area collars about the short geodesics, the $R_{\epsilon}$ hyperbolic metrics converge $C^{\infty}$ to the hyperbolic metric of $R\cup\bar R$, \cite{Wlhyp}.  The uniformization groups $\Gamma(R_{\epsilon})$ for the $R_{\epsilon}$, Chatauby converge to the uniformization pair $\Gamma(R),\,\Gamma(\bar R)$, relative to $F$ and the horocycle simplex $\sigma$.  The uniqueness of geodesics and convergence of hyperbolic metrics provide that the geodesics $\tilde \beta_j$ in the free homotopy classes $\beta_j$ converge uniformly on $\sigma$ collar complements to $\widehat \beta_j\cup\rho(\widehat\beta_j)$ on $R\cup\bar R$.  

We are ready to compare the effect of the Thurston shear $\sum\mathfrak b_j(\widehat\beta_j\cup-\rho(\widehat\beta_j))$ on $R\cup\bar R$ to the effect of the opposing twist $\sum\mathfrak b_j\tilde\beta_j$ on the hyperbolic metric of $R_{\epsilon}$.  The reflection $\rho$ reverses orientation and notions of left/right; even though $\bar R$ is the mirror image, we require regions to move in the same direction by a twist; the minus sign provides the desired effect.  Opposing twist deformations do not preserve the reflection symmetry.  As a preliminary matter, we note from Lemma \ref{oppbd} for weights bounded, the opposing twist of $R_{\epsilon}$ has small geodesic lengths bounded in terms of $\epsilon$.   Twisting  $R_{\epsilon}$ defines a family close to the frontier $\mathcal T(\sigma)$.  We observe the following.

\begin{lemma}\label{opclose} For $\epsilon$ small and weights bounded, the opposing twist $\sum\mathfrak b_j\tilde\beta_j$ of $R_{\epsilon}$ is Chatauby close to the Thurston shear 
$\sum\mathfrak b_j(\widehat\beta_j\cup-\rho(\widehat\beta_j))$ of $R\cup\bar R$.  Furthermore,  the infinitesimal opposing twist is close to the infinitesimal Thurston shear in the sense of infinitesimal variations of $PSL(2;\mathbb R)$ representations.
\end{lemma}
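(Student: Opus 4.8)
The plan is to reduce to a component-by-component statement and combine the Chabauty convergence of the uniformizing groups with a convergence of twist deformation maps furnished by Lemma \ref{oppconv}. Recall from the construction that, relative to $F$ and $\sigma$, the groups $\Gamma(R_\epsilon)$ Chabauty converge to the uniformization pair $\Gamma(R),\Gamma(\bar R)$, and that for each $j$ the geodesic $\tilde\beta_j$ on $R_\epsilon$ converges uniformly on $\sigma$-collar complements to $\widehat\beta_j\cup\rho(\widehat\beta_j)$ on $R\cup\bar R$. By the Chabauty convergence characterization \cite[Thrm.~2]{HrCh}, it suffices to fix a component $F_j$ of $F-\sigma$, choose $PSL(2;\mathbb R)$ conjugations so that the uniformizing representations restricted to $\pi_1(F_j)$ converge element-wise to those of the corresponding part of $R\cup\bar R$, and then show that the associated opposing-twist deformed representations converge element-wise to the Thurston-shear deformed representations (the part carrying $R$ getting lines $\widehat\beta_j$ with weights $\mathfrak b_j$, the part carrying $\bar R$ getting lines $\rho(\widehat\beta_j)$ with weights $-\mathfrak b_j$, the minus sign accounting for the orientation reversal by $\rho$). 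Since a twist-deformed holonomy is a word in the original holonomies and the inserted twist transformations, and element-wise convergence of representations is preserved under such operations, everything reduces to showing that the combined twist deformation maps $\Phi_\epsilon$ of $\mathbb H$, built from $\sum\mathfrak b_j\tilde\beta_j$ and normalized at a basepoint off the line liftings, converge in the compact open topology to the combined Thurston shear map $\Phi_0$.

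For $\Phi_\epsilon\to\Phi_0$ I would treat separately the $\sigma$-collar complements and the $\sigma$-collars. Over a $\sigma$-collar complement the hyperbolic metrics converge $C^\infty$ and the line liftings of the $\tilde\beta_j$ converge to those of the $\widehat\beta_j\cup\rho(\widehat\beta_j)$; using the order-of-composition description of combined twists from Section \ref{Thuroppos} together with the continuity of Fenchel-Nielsen twists in their data (as in the proof of Lemma \ref{oppbd}), the finite composition $\Phi_\epsilon$ converges on relatively compact sets to $\Phi_0$. Over a $\sigma$-collar --- a hyperbolic cylinder degenerating to a cusp region of $R\cup\bar R$ --- the line liftings crossing it inherit the vanishing weight sum from the balancing of $\sum\mathfrak b_j\widehat\beta_j$ at the cusp and so constitute an opposing twist of the cylinder; Lemma \ref{oppconv} then gives that, suitably normalized, the opposing twist map converges to the Thurston shear map of the limit cusp region, the balancing being exactly the condition that makes the limit a complete, hence well-defined, Thurston shear. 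Patching the two regions via the common normalization at the basepoint and the order-of-composition rule gives $\Phi_\epsilon\to\Phi_0$ on all compact subsets of $\mathbb H$, which establishes the first assertion.

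For the infinitesimal assertion I would differentiate the twist flow at magnitude $t=0$. With all weights scaled to $t\mathfrak b_j$, the combined twist $\Phi_{\epsilon,t}$, normalized at the basepoint, depends real-analytically on $t$, and the first-order variation of the opposing twist flow is the cocycle $\gamma\mapsto\left.\frac{d}{dt}\right|_{t=0}\big(\Phi_{\epsilon,t}\,\rho_\epsilon(\gamma)\,\Phi_{\epsilon,t}^{-1}\big)$, equivalently the piecewise-Killing vector field $u_\epsilon=\left.\frac{d}{dt}\right|_{t=0}\Phi_{\epsilon,t}$ on $\mathbb H$, given at a point $w$ by $u_\epsilon(w)=\sum\mathfrak b_j X_{\tilde\beta_j}(w)$, the sum over the line liftings separating the basepoint from $w$ and $X_{\tilde\beta_j}$ the vector field induced by the element of $\mathfrak{sl}(2;\mathbb R)$ generating translation along $\tilde\beta_j$. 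Applying Lemma \ref{oppconv} at each $t$, using that the families $\{\mathfrak b_j\tilde\beta_j\}$ have a bounded number of bounded weights converging to the shear data, and invoking the real-analytic dependence on $t$ uniform in $\epsilon$, the $t$-derivative may be passed through the $\epsilon\to0$ limit, so $u_\epsilon\to u_0$ in the compact open topology with $u_0$ the piecewise-Killing field of the shear; the associated $\mathfrak{sl}(2;\mathbb R)$-valued cocycles then converge, which is the asserted closeness of the infinitesimal opposing twist to the infinitesimal Thurston shear.

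The main obstacle is the behavior over the degenerating collars, where individual contributions --- the relative displacement produced by a single twist line, or the norm of a single infinitesimal twist $\mathfrak b_j X_{\tilde\beta_j}$ --- diverge as $\epsilon\to0$ while the balanced opposing combinations remain bounded and converge; this ``diverging terms cancel'' effect is precisely what Lemmas \ref{oppbd} and \ref{oppconv} deliver at the level of maps, and the genuinely delicate point is its infinitesimal counterpart, namely justifying that the $\epsilon\to0$ limit commutes with the $t$-derivative uniformly across the collars, which requires controlling the variation of the opposing-twist family near the degenerating cylinders and not merely the family itself.
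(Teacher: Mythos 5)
Your argument is correct and follows essentially the same route as the paper: element-wise (Chabauty) convergence of the component representations, reduction to finitely many generators whose images are determined by their action on a relatively compact set meeting only finitely many twist/shear lines, compact-open convergence of the normalized finite compositions of twists to the shear (your collar/complement split with Lemma \ref{oppconv} is just a more explicit version of the paper's appeal to metric convergence of the lines), and differentiation of the $t$-parametrized family for the infinitesimal statement. The only remark is that your concern about commuting the $t$-derivative with the $\epsilon\to 0$ limit uniformly across the degenerating collars is not needed at the level of representation variations: since only finitely many lines meet the determining compact set, the $t=0$ derivative is the explicit finite sum of weighted Killing fields you wrote down, and its convergence follows directly from convergence of those lines with bounded weights, which is how the paper concludes.
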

\begin{proof}  In brief the convergence of metrics provides for the compact open convergence of the twist/shear lines on $\mathbb H$, which in turn provides for the element wise convergence of representations.   By construction of $R_{\epsilon}$, for the components $F_j$ of $F-\sigma$, the representations $\pi_1(F_j)$ into $PSL(2;\mathbb R)$ converge element wise and the twist lines compact open converge to shear lines.  Choose generators for the limiting representations and a relatively compact open set $U\subset\mathbb H$, such that $CU\cap U\ne\varnothing$ for each generator $C$. 
For $\epsilon$ small, the same elements generate the representations of $\pi_1(F_j)$ and satisfy the non empty translate intersection condition.  The representations are completely determined by their action on $U$.  A twist/shear map $\tau$ of $\mathbb H$ induces a variation of a representation by varying a transformation $B$ by the conjugation $\tau B\tau^{-1}$. Only a finite number of twist/shear lines intersect $U$.  The $PSL(2;\mathbb R)$ normalized combined twist is given by finite ordered compositions as described above.  By metric convergence, as $\epsilon$ tends to zero, on $U$ the twist lines converge uniformly and the twists converge uniformly to shears and thus the representations of the finite number of generators converge.  The representations are element wise uniformly close in $\epsilon$.  To consider the infinitesimal variations, we introduce a parameter $t$ for $t\sum\mathfrak b_j\tilde\beta_j$ and $t\sum\mathfrak b_j(\widehat\beta_j\cup-\rho(\widehat\beta_j))$.  The considerations provide that the initial infinitesimal variations of the generators are also close in $\epsilon$.  The infinitesimal variations of the representations are determined on generators.   
\end{proof}       

\section{Infinitesimal Thurston shears and opposing twists}\label{results}

We are interested in geodesic-length gradients.  A thick-thin decomposition of hyperbolic surfaces is determined by a positive constant.  The thin subset consists of those points with injectivity radius at most the positive constant; for a constant at most unity the thin subset is a disjoint union of collars and horoballs  \cite{Busbook}.  Surface representations into $PSL(2;\mathbb R)$ are Chatauby close precisely when their thick subsets are Gromov-Hausdorff close.  For a sequence of hyperbolic surfaces with certain geodesic-lengths tending to zero, we are interested in the magnitude and convergence of geodesic-length gradients $\grad\lla$ for geodesics $\alpha$ crossing the short geodesic-length collars. 

Applications of convergence of surfaces and gradients include generalizing the Gardiner formula, Theorem \ref{Gardtheta}, to balanced sums of ideal geodesics and generalizing twist length duality (\ref{wpdual}) to Thurston shears and balanced sums of ideal geodesics.  The basic matter is to understand the effect of Chatauby convergence for sums of the basic differential $\Omega^2$ from Section \ref{glfs}.  We begin with convergence of hyperbolic transformations of $\mathbb H$. 

A hyperbolic transformation with translation length $\ell$, fixed points symmetric with respect to the origin and $i$ on its collar boundary is given as
\[
A\,=\,
\begin{pmatrix}
\cosh \ell/2 & 1/\ell\,\sinh \ell/2 \\
\ell\,\sinh \ell/2 & \cosh \ell/2
\end{pmatrix} 
\]
($i$ is distance $\log1/\ell$ to the $A$ axis with endpoints $\pm 1/\ell$).  As $\ell$ tends to zero, $A$ converges to the parabolic transformation
\[
\begin{pmatrix}
1 & 1/2 \\ 0 & 1
\end{pmatrix}.  
\]

We consider a Chatauby converging sequence of surfaces with short length core geodesics and a crossing geodesic intersecting the core geodesics orthogonally.   
\begin{figure}[htbp] 
  \centering
  \includegraphics[bb=0 0 460 152,width=4in,height=1.2in,keepaspectratio]{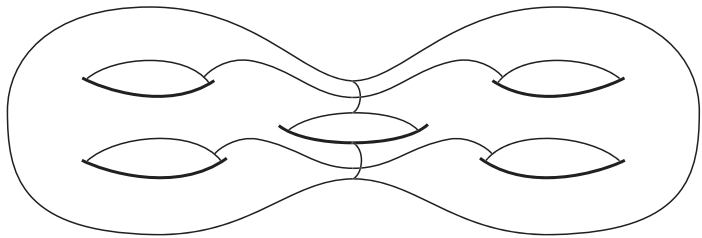}
  \caption{A symmetric compact surface with crossing and core geodesics.}
  \label{fig:crossing}
\end{figure}
\noindent A crossing geodesic intersects collars and core geodesics.  Given a segment of a crossing geodesic $\alpha$ in a thick region, normalize the universal coverings so that the segment lifts to a segment along the imaginary axis with highest point at $i$.  Extend the segment by including the arcs that connect to core geodesics (the added arcs cross half collars).  A core geodesic intersecting $\alpha$ lifts to a geodesic orthogonal to the imaginary axis.  The figures for the universal covers of the surface, Figure \ref{fig:fundregn}, and the Chatauby limit, Figure \ref{fig:fundcusp}, are as follows.   In the figures the collar lift and its limit are shaded.  In Figure \ref{fig:fundregn}, the left and right circular arcs orthogonal to the baseline bound a fundamental domain for a core geodesic transformation.  

\begin{figure}[htb] 
  \centering
  \includegraphics[bb=0 0 519 360,width=2.9in,height=2.01in,keepaspectratio]{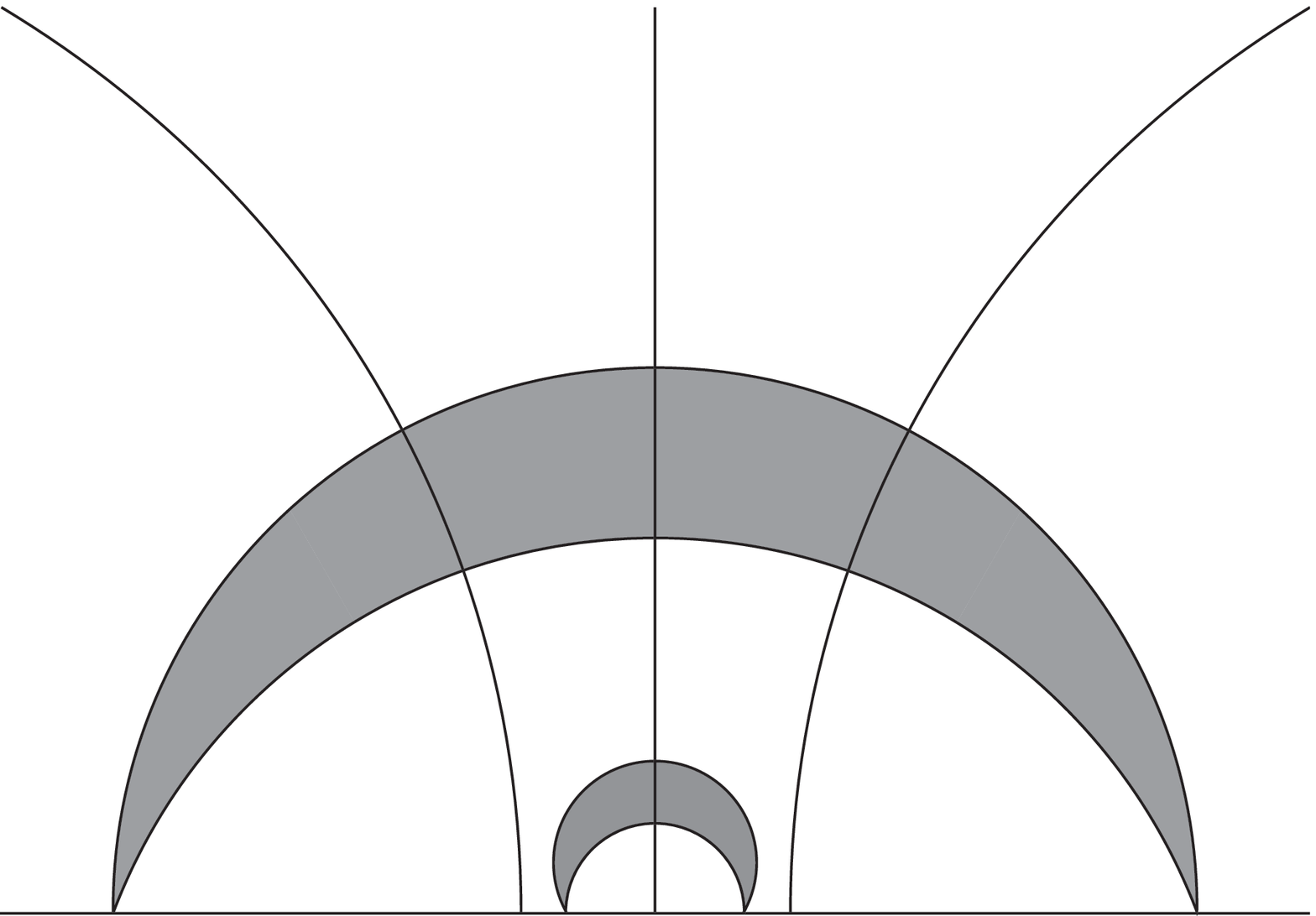}
  \caption{Crossing and core geodesics.  The vertical line is the lift of the crossing geodesic.  The two semi circles orthogonal to the baseline are consecutive lifts of the core geodesic.  The left and right circular arcs bound a fundamental domain for the hyperbolic transformation stabilizing the larger semi circle.  The shaded sectors are lifts of half collars for the core geodesic.  The region bounded by the shaded sectors and the circular arcs covers a region containing a component of the thick subset of the surface.}
  \label{fig:fundregn}
\end{figure}

\begin{figure}[htb] 
  \centering
  \includegraphics[bb=0 0 508 382,width=3.15in,height=2.37in,keepaspectratio]{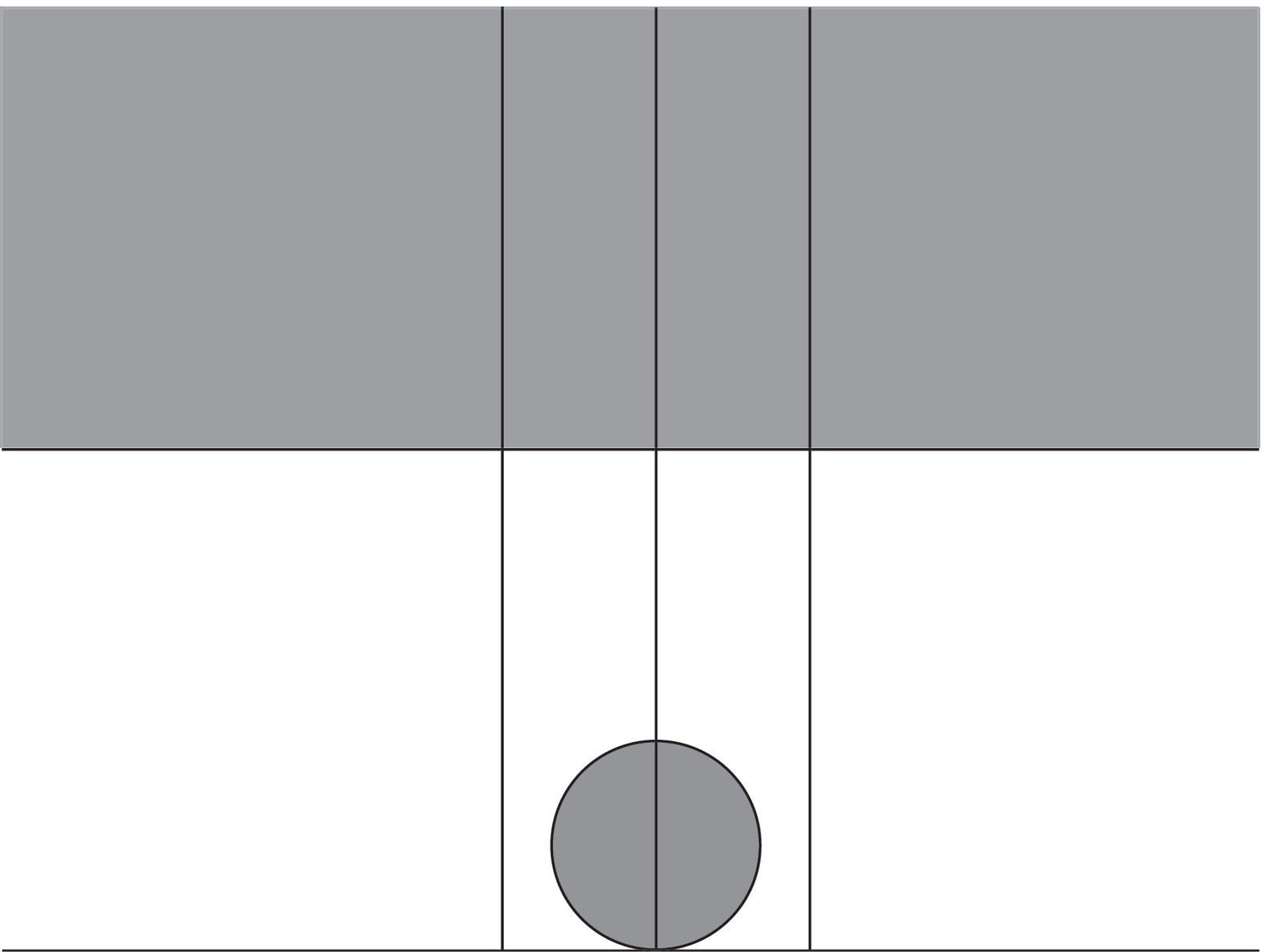}
  \caption{An ideal geodesic and horoballs.  The central vertical line is the lift of the ideal geodesic connecting cusps.  The left and right vertical lines bound a fundamental domain for the parabolic transformation stabilizing infinity.  The shaded sectors are horoballs about the cusps.  The region bounded by the shaded sectors and the vertical lines covers a region containing the thick subset of the surface. }
  \label{fig:fundcusp}
\end{figure}

\noindent Chatauby convergence provides that the original segments on the crossing geodesic $\alpha$ have length bounded and it is standard that collar boundaries converge to horocycles.  Figure \ref{fig:fundcusp} is the limit of a sequence of Figures \ref{fig:fundregn} with upper, respectively lower, shaded regions converging to upper, respectively lower, shaded regions.   The crossing geodesic limits to an ideal geodesic connecting cusps.

\begin{definition}  For an ideal geodesic $\alpha$, we write
\[
d\lla\,=\,\frac{2}{\pi}\sum_{C\in\Gamma}\Omega_{pq}^2(Cz)
\]
for the infinite series, where $p,q$ are endpoints of a lift of $\alpha$ to $\mathbb H$.
\end{definition}
  
\begin{lemma}\label{idglfb}  For a surface $R$ with cusps and an ideal geodesic $\alpha$, the infinite series $d\lla$ converges.  As above, consider surfaces $R_{\epsilon}$ with reflection symmetries obtained by doubling $R$ across its cusps and opening cusps to obtain short length core geodesics.  Consider that an ideal geodesic $\alpha$ on $R$ is approximated on thick subsets by closed core orthogonal geodesics $\alpha_{\epsilon}$ on $R_{\epsilon}$.  There is a  Chatauby neighborhood $\mathcal U$ of $R\cup\bar R$ such that for $R_{\epsilon}\in\mathcal U$, on thick subsets the harmonic Beltrami differentials $d\ell_{\alpha_{\epsilon}}(ds^2)^{-1}$ and $d\lla(ds^2)^{-1}$ are uniformly bounded and are uniformly close.
\end{lemma}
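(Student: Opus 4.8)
The plan is to prove the two assertions of Lemma~\ref{idglfb} in turn: first the convergence of the infinite series $d\lla=\frac2\pi\sum_{C\in\Gamma}\Omega_{pq}^2(Cz)$ for a fixed surface with cusps, and then the uniform boundedness and convergence of the associated harmonic Beltrami differentials over a Chatauby neighborhood. For the first assertion I would compare with the geodesic-length series of Theorem~\ref{Gardtheta}: choosing a short closed geodesic $\alpha'$ on $R$ (there is always one after doubling, or one may compare directly on a fixed thick model), Gardiner's series $\sum_{C\in\langle A'\rangle\backslash\Gamma}\Omega_{r_{A'}a_{A'}}^2(Cz)$ converges in $L^1$, and the terms differ from $\Omega_{pq}^2(Cz)$ only by the partial-fraction structure near the two fixed points versus the two cusp endpoints. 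Alternatively, and more robustly, the standard estimate $|\Omega_{pq}(Cz)|^2(ds^2)^{-1}\le c\cdot(\text{area of }Cz\text{-image of a fixed compact set})$ shows that $\sum_{C}\Omega_{pq}^2(Cz)(ds^2)^{-1}$ is dominated, up to a constant depending only on the injectivity radius and the location of $z$, by a Poincar\'e-type series $\sum_C (\operatorname{Im} Cz)^{?}$ which converges for a cofinite group; summing the tails over the $\Gamma$-translates entering a cusp uses that a horoball neighborhood of the cusp contributes a geometrically decaying sum. This gives locally uniform convergence of the series on $\mathbb H$ and hence a well-defined holomorphic quadratic differential on $R$ with at most double poles controlled by the cusps, i.e.\ an element of $Q(R)$ after the standard removable-singularity check at the punctures (the $\Omega_{pq}^2$ with $p,q$ the parabolic fixed points has the right local behavior there).

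For the second assertion I would exploit that Chatauby convergence of $\Gamma(R_\epsilon)$ to the pair $\Gamma(R),\Gamma(\bar R)$ relative to $F$ and the horocycle simplex $\sigma$ means, by the characterization in Section~\ref{Chat} (elementwise convergence of generators of $\pi_1(F_j)$ after conjugation) together with the explicit matrix formula preceding this Lemma ($A\to$ the parabolic $\begin{pmatrix}1&1/2\\0&1\end{pmatrix}$ as $\ell\to0$), that on any fixed relatively compact $K\subset\mathbb H$ the group elements $C\in\Gamma(R_\epsilon)$ with $CK\cap K\ne\varnothing$ are in bijection, for $\epsilon$ small, with those of $\Gamma(R)\cup\Gamma(\bar R)$, and the corresponding M\"obius transformations converge in the compact-open topology. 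Since the approximating closed geodesics $\alpha_\epsilon$ converge uniformly on thick subsets to $\widehat\beta_j\cup\rho(\widehat\beta_j)$ (as recorded in Section~\ref{Chat}), the endpoints of the lifts converge, and each individual term $\Omega_{p_\epsilon q_\epsilon}^2(Cz)(ds^2_\epsilon)^{-1}$ converges uniformly on $K$ to the corresponding term of $d\lla(ds^2)^{-1}$ — here one uses that $\Omega_{pq}$ is jointly continuous in $(p,q)$ including at $\infty$, where it degenerates to $dz/(z-p)$, precisely the regime relevant to cusp endpoints. The uniform bound on thick subsets then follows from a uniform-in-$\epsilon$ tail estimate: the Poincar\'e-series domination above is stable under Chatauby convergence because the injectivity radius on the thick part is bounded below uniformly, so the tails of $\sum_C |\Omega_{p_\epsilon q_\epsilon}(Cz)|^2(ds^2_\epsilon)^{-1}$ are uniformly small; combining a uniformly small tail with termwise uniform convergence of the (uniformly bounded number of) leading terms yields uniform closeness.

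The main obstacle I expect is controlling the series \emph{uniformly in $\epsilon$ near the degenerating collars/cusps}: individual terms $\Omega_{p_\epsilon q_\epsilon}^2(Cz)$ for $C$ a power of the short-geodesic deck transformation $A_\epsilon$ blow up in number (the collar is long) even though each is small, and one must see that their sum stays bounded and in fact mirrors, in the limit, the geometrically convergent sum over $\Gamma$-translates into the cusp. The resolution is the reflection symmetry $\rho$ and the doubling construction: the short-geodesic collar on $R_\epsilon$ is symmetric, the twist line / crossing geodesic $\alpha_\epsilon$ meets the core orthogonally, and in the normalization where the crossing segment lifts to the imaginary axis with top at $i$, the powers $A_\epsilon^n$ act as in the displayed matrix and converge to powers of the parabolic fixing $\infty$; the sum over $n$ of $\Omega_{p_\epsilon q_\epsilon}^2(A_\epsilon^n z)(ds^2_\epsilon)^{-1}$ is then uniformly comparable to the convergent sum $\sum_n \Omega_{pq}^2(\text{(parabolic)}^n z)(ds^2)^{-1}$, which is exactly the contribution of one cusp to $d\lla(ds^2)^{-1}$. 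Feeding this into the double-coset / collar decomposition already used in Section~\ref{glfs} organizes the full series as a uniformly convergent sum, and the Lemma follows. (The standard comparison estimates for the $R_\epsilon$ metrics converging $C^\infty$ to that of $R\cup\bar R$ off the collars, cited in Section~\ref{Chat}, are what let me pass between $(ds^2_\epsilon)^{-1}$ and $(ds^2)^{-1}$ on thick subsets.)
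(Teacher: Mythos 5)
Your overall skeleton for the closeness statement (finitely many orbit terms handled by Chatauby convergence of group elements and endpoints, plus a uniformly small tail) is the same as the paper's, but the two places where you park the real difficulty are asserted rather than proved, and they are exactly the content of the lemma. First, the convergence/domination step: you bound the series by ``a Poincar\'e-type series $\sum_C(\operatorname{Im} Cz)^{?}$ which converges for a cofinite group,'' leaving the exponent unspecified. The natural termwise majorant here is $|\Omega_{0\infty}^2|(ds^2)^{-1}(Cz)=\sin^2\theta(Cz)$, and if you organize this by cosets of a cusp stabilizer the crude comparison is with the Eisenstein series at exponent $1$, which \emph{diverges} for a cofinite group; the convergence is true but needs the finer structure (orbit points of a thick point avoid the horoballs at the two endpoints, and the near-diagonal terms behave quadratically, not linearly, in $\operatorname{Im} Cz$). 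Second, your resolution of the ``main obstacle'' --- that the sum over powers of the short core transformation is ``uniformly comparable'' to the sum over powers of the limiting parabolic --- is precisely the uniform-in-$\epsilon$ estimate the lemma is claiming, and you give no argument for it (individual powers $A_\epsilon^n$ only converge to parabolic powers for $n$ fixed, not uniformly, so the comparison is not termwise).

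The paper closes both gaps with one mechanism that your proposal never identifies: the mean value estimate for holomorphic quadratic differentials on the thick part, applied to disjoint injectivity-radius balls about the orbit points $Cz$ (all of which lie outside the collar crescents, respectively cusp horoballs, because $z$ is thick), together with the explicit computation that the $|\Omega_{0\infty}|^2$-mass $\frac{dr}{r}\,d\theta$ of the region between the shaded crescents/sectors of Figures \ref{fig:fundregn}--\ref{fig:fundcusp} is finite and independent of $\epsilon$: by the normalization it is dominated by $\int_0^{\pi}\int_{a\sin\theta}^{\csc\theta}\frac{dr}{r}\,d\theta=2\pi\log 2-\pi\log a$. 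This single finite area bound gives convergence, the uniform bound on thick subsets for both $d\ell_{\alpha_{\epsilon}}(ds^2)^{-1}$ and $d\lla(ds^2)^{-1}$, and (by choosing a compact $K$ exhausting all but $\delta$ of the mass) the uniformly small tails needed before invoking Chatauby convergence for the finitely many terms in $K$ --- no comparison of hyperbolic powers with parabolic powers is needed. Two further corrections: your first suggested route, comparison with Gardiner's series of Theorem \ref{Gardtheta}, does not transfer, since that series is a sum over cosets $\langle A\rangle\backslash\Gamma$ of the hyperbolic stabilizer while $d\lla$ is a sum over the full group, so there is no term-by-term correspondence; and your claim that the series defines an element of $Q(R)$ after a removable-singularity check is false --- the limit has double poles at the cusps that $\alpha$ enters and is not in $Q(R)$ (the paper states this explicitly in Section \ref{riemmgeom}), which is why the lemma only asserts bounds on thick subsets.
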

\begin{proof}The $d\lla$ series are bounded by area integrals as follows. We first consider regions.  In Figure \ref{fig:fundregn}, the unshaded region in $\mathbb H$ between the shaded crescents, by normalization, lies below the line $\Im z=1$ and outside a circle tangent to $\mathbb R$ at $0$.  The integral of $|\Omega_{0\infty}|^2=dr/r\,d\theta$ for $z=re^{i\theta}$ over the unshaded region is bounded by the integral over the region between the shaded sectors in Figure \ref{fig:fundcusp}
\[
\int^{\pi}_0\int^{\csc \theta}_{a\sin\theta}\frac{dr}{r}d\theta\,=\,\int_0^{\pi}\log\frac{\csc^2\theta}{a}\,d\theta\,=\,2\pi\log 2\,-\,\pi\log a.
\]
On a thick region of a surface a holomorphic quadratic differential satisfies a mean value estimate in terms of the integral over a hyperbolic metric ball of a radius $r_0$ at most the injectivity radius.  The thick regions of $R_{\epsilon}$ and $R$ are contained in the projection of the indicated unshaded regions in Figures \ref{fig:fundregn} and \ref{fig:fundcusp}.   By the standard unfolding, the absolute values of $d\ell_{\alpha_{\epsilon}}(ds^2)^{-1}$ and $d\lla(ds^2)^{-1}$ at a thick point are bounded by the integral of  $|\Omega_{0\infty}|^2$ over the disjoint union of $r_0$ balls about the orbit of the lifted point in the unshaded region \cite[Chapter 8]{Wlcbms}.  By the above considerations, the integrals are uniformly bounded, establishing the first result.  

For the second conclusion, given $\delta$ positive, choose a relatively compact set $K$ in the Figure \ref{fig:fundregn} region between shaded crescents, such that the integral of $|\Omega_{0\infty}|^2$ over the complement between the shaded crescents is bounded by $\delta$.  The sum of evaluations of $\Omega_{0\infty}^2$ at points not in $K$ is bounded by $\delta$ by a mean value estimate.  Chatauby convergence provides convergence for the sum of evaluations of $\Omega_{0\infty}^2$ for the orbit points in $K$. Boundedness and convergence are established.  
\end{proof}

\begin{example} \textup{The ideal geodesic series $d\lla$ for a hyperbolic cusp.}
For a cusp uniformized at infinity with integer translation group then the sum over the group is
\[
\sum_{C\in\Gamma_{\infty}}\Omega_{0\infty}^2(Cz)\,=\,\sum_{n\in\mathbb Z}\frac{dz^2}{(z-n)^2}.
\]
The formula for the integer sum gives $d\lla=2\pi\csc^2\pi z\,dz^2$.  From the above lemma, for a hyperbolic cylinder the series 
$d\lla$ approximates the cosecant squared in the compact open topology of $\mathbb H$.
\end{example}

We now combine considerations to obtain a uniform majorant for an opposing sum of twists and gradients of geodesic-length functions.  The majorant is the necessary ingredient for general limiting arguments.  We codify the situation as follows.
\begin{definition}
A crossing configuration is a compact surface with reflection symmetry with fixed locus a finite union of small length core geodesics $\gamma$ and no other geodesics having small length.  A crossing geodesic $\alpha$ is symmetric with respect to the reflection with two intersections with the core geodesics.  For a crossing configuration, a sum 
$\sum\mathfrak a_j\ell_{\alpha_j}$ of crossing geodesics length functions is balanced provided for each core geodesic $\gamma$, the weighted intersection number $\sum\mathfrak a_j\#(\alpha_j\cap\gamma)$ vanishes.  For a surface with cusps, a formal sum $\sum\mathfrak a_j\ell_{\alpha_j}$ of ideal geodesics length functions is balanced provided at each cusp the weighted intersection number $\sum\mathfrak a_j\#(\alpha_j\cap h)$ with each small closed horocycle $h$ vanishes.
\end{definition}

Balanced is the precedent to the condition of the weight sum vanishing for each cusp for a Thurston shear. To prepare for a convergence argument, we first consider the distribution of mass of a harmonic Beltrami differential.   
 
\begin{lemma}\label{bglfb} A balanced sum $\sigma=\sum \mathfrak a_j\grad\ell_{\alpha_j}$ of gradients for a crossing configuration is bounded as follows. On the thick subset the absolute value $|\sigma|$ is uniformly bounded.  On a core geodesic $\gamma$ collar, uniformized as $1\le|z|\le e^{\llg}$, 
$\ell_{\gamma}\le \theta\le \pi-\ell_{\gamma}$ for $z=re^{i\theta}\in\mathbb H$, the balanced sum $\sigma$ is bounded as
\[
O\big((\ell_{\gamma}^3+e^{-2\pi\theta/\ell_{\gamma}}\ +\ e^{2\pi(\theta-\pi)/\ell_{\gamma}})\ell_{\gamma}^{-2}\sin^2\theta\big).
\]
The bounding constants depend only on the number of crossing geodesics, the norm of the weights and a choice of Chatauby neighborhood for the limiting cusped surface.
\end{lemma}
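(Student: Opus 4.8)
\section*{Proof plan for Lemma \ref{bglfb}}

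The plan is to treat the thick subset by direct reference to Lemma \ref{idglfb} and to reduce the collar estimate to a Fourier analysis of a single holomorphic function attached to $\sigma$ on the $\gamma$-cylinder. For the thick subset, since $\sigma=\sum\mathfrak a_j\grad\ell_{\alpha_j}$ is a finite sum with weights bounded in terms of the data, Lemma \ref{idglfb} gives at once that $|\sigma|$ is uniformly bounded on thick subsets, with the constant depending only on the number of crossing geodesics, the norm of the weights, and a Chatauby neighborhood of the limiting cusped surface. For the collar estimate we work in the collar lift, uniformizing $\gamma$ by $G\colon z\mapsto e^{\llg}z$ fixing $0,\infty$, so that the collar is $\{1\le|z|\le e^{\llg}\}$, $\llg\le\vartheta\le\pi-\llg$, $z=re^{i\vartheta}$. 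Writing $\sigma=\tfrac2\pi\overline{\Theta}\,(ds^2)^{-1}$ with $\Theta=\sum_j\mathfrak a_j\sum_{\mathrm{lifts}}\Omega_{pq}^2=F\,dz^2$, one has $|\sigma|=\tfrac2\pi(\Im z)^2|F(z)|=\tfrac2\pi\sin^2\vartheta\,|z^2F(z)|$. Because $\Theta$ descends to $R_{\epsilon}$, the function $z^2F(z)$ is $\langle G\rangle$-invariant and holomorphic, hence descends to a holomorphic function on the hyperbolic cylinder $\mathbb H/\langle G\rangle$ with an absolutely convergent expansion $z^2F(z)=\sum_{k\in\mathbb Z}c_k\,e^{2\pi i k\log z/\llg}$ on the collar. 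Thus $|\sigma|\le\tfrac2\pi\sin^2\vartheta\big(|c_0|+\sum_{k\ge1}|c_k|e^{-2\pi k\vartheta/\llg}+\sum_{k\ge1}|c_{-k}|e^{2\pi k\vartheta/\llg}\big)$, and everything is reduced to bounding $|c_0|$ and the off-constant coefficients.

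For the off-constant modes I would use the thick-subset bound of the first paragraph on the two collar boundary circles $\{\vartheta=\llg\}$ and $\{\vartheta=\pi-\llg\}$: there $\sin^2\vartheta\asymp\llg^2$, so $\|z^2F\|_\infty\lesssim\llg^{-2}$ on each, whence $|c_k|e^{-2\pi k}\lesssim\llg^{-2}$ for $k\ge1$ and $|c_{-k}|e^{-2\pi k(\pi-\llg)/\llg}\lesssim\llg^{-2}$ for $k\ge1$. Summing the resulting geometric series, for $\vartheta$ bounded away from the pertinent boundary by $2\llg$, gives $\sum_{k\ge1}|c_k|e^{-2\pi k\vartheta/\llg}\lesssim\llg^{-2}e^{-2\pi\vartheta/\llg}$ and $\sum_{k\ge1}|c_{-k}|e^{2\pi k\vartheta/\llg}\lesssim\llg^{-2}e^{2\pi(\vartheta-\pi)/\llg}$, which are precisely the two exponential terms of the claim; on the boundary layers $\vartheta\le 2\llg$ or $\vartheta\ge\pi-2\llg$ the $O(1)$ thick-subset bound already subsumes the stated estimate there. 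The implied constants are again controlled on a Chatauby neighborhood by Lemmas \ref{oppbd}, \ref{opclose} and \ref{idglfb}.

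The heart of the proof is the bound $|c_0|\lesssim\llg$. Here I organize the lifts of each $\alpha_j$ by the cyclic group $\langle G\rangle$: the lifts crossing the axis $\widetilde\gamma$ of $G$ fall into $\#(\alpha_j\cap\gamma)$ orbits. The reflection symmetry $\rho$, whose fixed locus contains the core geodesics, forces $\alpha_j$ to meet each such core geodesic perpendicularly, so a representative crossing lift has endpoints $-a,a$, and the orbit sum $\sum_n\Omega^2_{G^n(-a)\,G^na}(z)$ has, as $\llg\to0$, continuum value $\tfrac1{\llg}\int_0^\infty \tfrac{4a^2 s}{(z^2-s^2a^2)^2}\,ds\;dz^2=\tfrac{-2}{\llg z^2}\,dz^2$, which is independent of $a$; by Poisson summation the difference from this value is exponentially small in $1/\llg$ away from the collar boundaries and so feeds only the modes $c_k$, $k\ne0$. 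Hence the constant mode collected from all crossing orbits is $\sum_j\mathfrak a_j\,\#(\alpha_j\cap\gamma)\cdot(-2/\llg)$, which vanishes by the balancedness hypothesis. What survives in $c_0$ comes from the non-crossing lifts and from the bounded corrections to the crossing-orbit sums: its imaginary (twist) part is $\sum_j\mathfrak a_j\, t_\gamma\ell_{\alpha_j}=\sum_j\mathfrak a_j\sum_{p\in\alpha_j\cap\gamma}\cos\theta_p=0$, again by perpendicularity together with the cosine formula of Theorem \ref{gradpr}; its real part is $O(\llg)$, estimated by combining the convergence of $\sum_j\mathfrak a_j\ell_{\alpha_j}$ to the finite limiting shear length with the area-integral bounds of Lemma \ref{idglfb} applied to the non-crossing lifts. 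Substituting $|c_0|\lesssim\llg$ into the displayed inequality produces the $\llg^{-2}\sin^2\vartheta\cdot\llg^3=\llg\sin^2\vartheta$ term and completes the collar bound.

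I expect the main obstacle to be exactly this constant mode: the cancellation forced by the balanced condition only removes the $O(\llg^{-1})$ part of $c_0$, and one must show that the residual $O(1)$ contributions — from the non-crossing lifts and from the $O(1)$ corrections to the crossing-orbit sums — genuinely cancel down to $O(\llg)$. This is where the reflection symmetry (perpendicular crossings, killing the twist derivative) and the finiteness of the limiting shear length have to be used together; keeping track of which lifts are "crossing" and which "non-crossing" near a degenerating collar, uniformly over a Chatauby neighborhood, is the delicate bookkeeping. By contrast the exponential error terms, as the second paragraph shows, come out cleanly from the Fourier/Poisson structure on the cylinder and require only the already-established thick-subset bound on the collar boundary.
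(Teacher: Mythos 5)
Your architecture (thick subset via Lemma \ref{idglfb}; Fourier expansion of $z^2F$ on the $\gamma$-cylinder; off-constant modes from the boundary maximum; balancedness killing the $-2/\llg$ constant-mode contribution of each crossing orbit) is sound and in fact parallels the paper, which instead quotes the collar estimate of \cite[Prop.~6]{Wlcurv} --- exactly the bound your first two paragraphs re-derive --- and then controls the one remaining term, the pairing $(\sigma,\grad\log\llg)$, i.e.\ your $c_0$. But at precisely the step you flag as the main obstacle there is a genuine gap: you need $|c_0|=O(\llg)$, and your proposed mechanism --- ``convergence of $\sum\mathfrak a_j\ell_{\alpha_j}$ to the finite limiting shear length'' together with the area-integral bounds of Lemma \ref{idglfb} applied to the non-crossing lifts --- does not produce that order. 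Finiteness of the limiting length is an $O(1)$ statement about lengths and says nothing quantitative about the zero Fourier mode of the quadratic differential, and Lemma \ref{idglfb} only gives uniform boundedness on thick sets; combined they give at best $c_0=O(1)$, hence $|\sigma|=O(\sin^2\theta)$ on the collar, which misses the claimed leading term $\llg^3\llg^{-2}\sin^2\theta=\llg\sin^2\theta$.

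What closes the gap is the geometric input the paper uses. Each non-crossing $\langle G\rangle$-orbit contributes to $c_0$ exactly $\llg^{-1}R(\cosh d)$, where $d$ is the distance from that lift to the axis (the same contour computation that gives your exact $-2/\llg$ per crossing orbit gives this, and it is the per-double-coset summand of Theorem \ref{gradpr} divided by $\llg$). Since $\alpha_j$ meets the collar only in the perpendicular crossing arcs, every non-crossing lift lies beyond the half collar, so $d\ge -\log\llg+O(1)$ and $R(\cosh d)\approx e^{-2d}\lesssim \llg^{2}e^{-2d'}$ with $d'$ the distance to the collar boundary; the sum of $e^{-2d'}$ over these lifts is uniformly bounded by the distant-sum estimate of \cite[Chap.~8]{Wlcbms}. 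This yields $\sum_{\mathrm{non\mbox{-}crossing}}R(\cosh d)=O(\llg^2)$, hence $c_0=O(\llg)$, which is exactly the paper's argument in the form ``orthogonal intersections contribute the exact constant $R(\cos\pi/2)=-2$, the weighted sum of these vanishes by balancedness, and each remaining summand $\mathcal R$ carries the factor $\llg^2$ of the half-collar width, with the remaining distance sum uniformly bounded.'' With this ingredient substituted for your third paragraph, your Fourier version of the proof goes through; incidentally the Poisson-summation and twist-derivative remarks are unnecessary, since the crossing-orbit constant mode is exactly $-2/\llg$ and the non-crossing constant modes are automatically real.
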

\begin{proof}A general bound for a harmonic Beltrami differential on a $\gamma$ collar is
\begin{equation}\label{collbd}
|\mu|\quad \mbox{is}\quad O\big(\big(|(\mu,\grad\log\llg)|\ +\ (e^{-2\pi\theta/\ell_{\gamma}}\ +\ e^{2\pi(\theta-\pi)/\ell_{\gamma}})\llg^{-2}\big)\sin^2\theta\, M\big)
\end{equation}
for $M$ the maximum of $\mu$ on the collar boundary \cite[Prop. 6]{Wlcurv}.  We use Theorem \ref{gradpr} to bound the pairings $\langle\grad\ell_{\alpha_j},\grad\llg\rangle$.  By setup the crossing and core geodesics are orthogonal.  Each core geodesic intersection contributes $-2$ to the pairing evaluation.  From the balanced hypothesis, the weighted sum of intersection contributions vanishes.  Each remaining term of the evaluation involves a connecting geodesic segment that crosses the $\gamma$ half collar; the width of the half collar is $-\log\llg$.  For large distance, the formula summand $\mathcal R$ is approximately $e^{-2d(\gamma,\alpha)}$.  In \cite[Chap. 8]{Wlcbms} we showed that the sum of distances from $\alpha$ to the $\gamma$ collar boundary is uniformly bounded.  It follows that the contribution $\llg^2$ of the half collar width can be factored out of each summand. The sum evaluation is $O(\llg^2)$, the desired bound.  Lemma \ref{idglfb} provides the desired bound for $\sigma$ on the thick subset.  
\end{proof}

\section{The symplectic geometry of lengths}\label{sympgeom}

There is a length interpretation for a balanced sum $\mathcal A=\sum \mathfrak a_j \ell_{\alpha_j}$ of ideal geodesics length functions as follows.  Let $\mathcal H$ be a neighborhood of the cusps given as a union of small horoballs, one at each cusp.  The length $L(\mathcal A)$ of the balanced sum is the sum with weights $\mathfrak a_j$ of lengths of segments $\alpha_j\cap (R-\mathcal H)$.  The balanced condition provides that the length does not depend on the choice of horoball neighborhood $\mathcal H$.  For a crossing configuration the length of a balanced sum $L(\mathcal A)$ is defined in the corresponding manner.  In the crossing case, the value $L$ coincides with the sum of geodesic lengths. 

The length $L(\mathcal A)$ of a balanced sum is a generalization of the $R$-length of a transverse cocycle. The balanced condition at cusps is discussed in \cite[\S 12.3]{Bonshear}, where it is noted that the condition provides a well-defined notion of length.  The definition in terms of horoballs shows that the length $L(\mathcal A)$ is given as $\sum2 \mathfrak a_j\log\lambda_{\alpha_j}$ for the $\lambda$-lengths of the ideal geodesics and a decoration.  An example of a balanced sum is a shear coordinate $\sigma_*$, see formula (\ref{shear}); the sum is balanced at each vertex of the quadrilateral of Figure
 \ref{fig:diamond}.  A second example comes directly from the shear coordinates of Riemann surfaces.  By Theorem \ref{Penthrm}, the sum $\sum\sigma_j\ell_{\alpha_j}$ is balanced since the sum of shear coordinates around each cusp vanishes. The adjustment of a factor of $2$ to our formulas as detailed in \cite[\S 5]{Wlcusps} is included in the following.  

\begin{theorem}\label{twlth} For a surface $R$ with cusps and a balanced sum $\mathcal A=\sum \mathfrak a_j \ell_{\alpha_j}$ of ideal geodesics length functions, the length $L(\mathcal A)$ is a differentiable function on the Teichm\"{u}ller space of $R$ with 
\[
dL(\mathcal A)\ =\ \sum \mathfrak a_jd\ell_{\alpha_j}\,\in\,Q(R).
\]
The formal sum $\sum \mathfrak a_j\alpha_j$ is data for an infinitesimal Thurston shear 
$\sigma_{\mathcal A}$ with
\[
\sigma_{\mathcal A}\ =\ \frac{i}{2}\sum \mathfrak a_j\grad \ell_{\alpha_j}.
\]
The WP twist-length duality
\[
2\omega_{WP}(\ ,\sigma_{\mathcal A})\ =\ dL(\mathcal A)
\]
is satisfied.  In particular, the Thurston infinitesimal shear $\sigma_{\mathcal A}$ is a WP symplectic vector field with Hamiltonian potential function $L(\mathcal A)/2$. 
\end{theorem}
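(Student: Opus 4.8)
The plan is to reduce each assertion to the corresponding statement for honest closed geodesics on the compact surfaces $R_\epsilon$ of Section \ref{Chat} and then to pass to the Chatauby limit; Lemmas \ref{idglfb}, \ref{bglfb} and \ref{opclose} are precisely the estimates that legitimize the limits. Write $\alpha_{j,\epsilon}$ for the core-orthogonal closed geodesic on $R_\epsilon$ approximating the ideal geodesic $\alpha_j$ on $R$, and $\beta_{j,\epsilon}$ for the surgered curve so that $\sum\mathfrak a_j\tilde\beta_{j,\epsilon}$ is the opposing twist whose geometric limit is the shear $\sum\mathfrak a_j\alpha_j$.

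For differentiability and the formula for $dL(\mathcal A)$: using $L(\mathcal A)=\sum 2\mathfrak a_j\log\lambda_{\alpha_j}$, Theorem \ref{Penthrm} makes $L(\mathcal A)$ real-analytic on $\caD\caT$, and since moving a decoration horocycle at a cusp translates each $\log\lambda_{\alpha_j}$ for an edge into that cusp by a common constant, the balanced hypothesis forces $L(\mathcal A)$ to be constant on the fibers of $\caD\caT\rightarrow\caT$ and to descend to a differentiable function on $\caT$. To pin down $dL(\mathcal A)\in Q(R)$ I would argue on $R_\epsilon$: let $L_\epsilon(\mathcal A)$ be the weighted sum of the lengths of the collar complements of the $\alpha_{j,\epsilon}$; a crossing perpendicular to a core geodesic $\gamma$ meets a standard $\gamma$-collar in an arc whose length is a fixed function of $\ell_\gamma$ alone, so the total collar contribution cancels by balancedness and $L_\epsilon(\mathcal A)=\sum\mathfrak a_j\ell_{\alpha_{j,\epsilon}}$ exactly; hence $dL_\epsilon(\mathcal A)=\sum\mathfrak a_j\,d\ell_{\alpha_{j,\epsilon}}$, a convergent sum of $\Omega^2$ translates by Gardiner's formula, Theorem \ref{Gardtheta}. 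Metric convergence gives $L_\epsilon(\mathcal A)\to L(\mathcal A)$; Lemma \ref{idglfb} gives convergence of $\sum\mathfrak a_j\,d\ell_{\alpha_{j,\epsilon}}(ds^2)^{-1}$ to $\sum\mathfrak a_j\,d\ell_{\alpha_j}(ds^2)^{-1}$ uniformly on thick subsets with a uniform majorant, and Lemma \ref{bglfb} controls the part over the pinching collars; integrating against harmonic Beltrami differentials and exchanging limit with integral (dominated convergence via the majorant) gives $dL(\mathcal A)=\sum\mathfrak a_j\,d\ell_{\alpha_j}$, the series of the definition of $d\lla$ for an ideal geodesic, an element of $Q(R)$.

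For the shear and the duality: on $R_\epsilon$ the infinitesimal generator of the opposing twist is $\sum\mathfrak a_j t_{\beta_{j,\epsilon}}$, so twist-length duality (\ref{wpdual}) gives $\sum\mathfrak a_j t_{\beta_{j,\epsilon}}=\frac12 J\sum\mathfrak a_j\grad\ell_{\beta_{j,\epsilon}}=\frac{i}{2}\sum\mathfrak a_j\grad\ell_{\beta_{j,\epsilon}}$. By Lemma \ref{opclose} this infinitesimal opposing twist converges, as a variation of $PSL(2;\mathbb R)$ representations, to the infinitesimal Thurston shear, whose restriction to the part corresponding to $R$ is $\sigma_{\mathcal A}$; Lemma \ref{idglfb} (with Lemma \ref{bglfb} on the collars) identifies the limit of the harmonic Beltrami differentials $\frac{i}{2}\sum\mathfrak a_j\grad\ell_{\beta_{j,\epsilon}}$ with $\frac{i}{2}\sum\mathfrak a_j\grad\ell_{\alpha_j}$, so $\sigma_{\mathcal A}=\frac{i}{2}\sum\mathfrak a_j\grad\ell_{\alpha_j}$. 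By Lemma \ref{idglfb} the series $\sum\mathfrak a_j\,d\ell_{\alpha_j}(ds^2)^{-1}$ is a bounded, hence $L^2$, harmonic Beltrami differential, so the WP metric-dual of $dL(\mathcal A)$ is $\grad L(\mathcal A)=\sum\mathfrak a_j\grad\ell_{\alpha_j}$ and $\sigma_{\mathcal A}=\frac12 J\grad L(\mathcal A)$; the K\"ahler identity $\omega_{WP}(\ ,J\grad h)=dh$, which for geodesic-lengths is exactly (\ref{wpdual}), then gives $2\omega_{WP}(\ ,\sigma_{\mathcal A})=\omega_{WP}(\ ,J\grad L(\mathcal A))=dL(\mathcal A)$ (equivalently, pass to the limit in the $R_\epsilon$ relation $2\omega_{WP}(\ ,\sum\mathfrak a_j t_{\beta_{j,\epsilon}})=d\sum\mathfrak a_j\ell_{\beta_{j,\epsilon}}$). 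Since $dL(\mathcal A)$ is exact, the duality $2\omega_{WP}(\ ,\sigma_{\mathcal A})=dL(\mathcal A)$ exhibits $\sigma_{\mathcal A}$ as a WP symplectic vector field with Hamiltonian potential $L(\mathcal A)/2$.

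I expect the main obstacle to be exactly the passage to the limit, that is, making precise that the diverging terms cancel. Each geodesic-length $\ell_{\alpha_{j,\epsilon}}$ and each coset series $\sum_C\Omega^2$ blows up as the core geodesics pinch; one must show that the balanced combination, together with the cancellation of collar contributions quantified by the vanishing weighted intersection numbers and Lemma \ref{bglfb}, has a finite limit, and that this limit is reached in a topology (pointwise WP norm on thick subsets plus the collar majorant) strong enough both to identify it with a genuine tangent vector of $\caT$ and to interchange limit with the WP pairing. All of this is the content of Lemmas \ref{idglfb}, \ref{bglfb} and \ref{opclose}, so the remaining work is careful assembly rather than a new idea.
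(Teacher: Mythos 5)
Your strategy is the paper's own: double $R$ across the cusps, open them to get the symmetric compact surfaces $R_{\epsilon}$, realize the infinitesimal shear as a limit of opposing twists via Lemma \ref{opclose}, and control the limit with Lemmas \ref{idglfb} and \ref{bglfb}. Several of your choices differ harmlessly in detail and some are nice: you get smoothness of $L(\mathcal A)$ from $\lambda$-lengths and the fibration $\caD\caT\rightarrow\caT$ (the paper instead observes that distances between prescribed horocycles are smooth functions of the $PSL(2;\mathbb R)$ representation), you record the exact cancellation $L_{\epsilon}(\mathcal A)=\sum\mathfrak a_j\ell_{\alpha_{j,\epsilon}}$ coming from orthogonality to the cores plus balancedness (the paper only bounds the collar contributions), and you finish the duality by the formal K\"{a}hler identity once the two identifications are in hand rather than re-running the limit.

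The genuine gap is at the identification $dL(\mathcal A)=\sum\mathfrak a_j d\ell_{\alpha_j}$. Knowing $L_{\epsilon}\rightarrow L$ and that the quadratic differentials $\sum\mathfrak a_j d\ell_{\alpha_{j,\epsilon}}$ converge (Lemmas \ref{idglfb}, \ref{bglfb}) does not by itself give ``derivative of the limit equals limit of the derivatives'': the differentials live on cotangent spaces of different Teichm\"{u}ller spaces of different dimensions, and ``integrating against harmonic Beltrami differentials and exchanging limit with integral'' only controls pairings after you have chosen tangent directions that make sense simultaneously on $R_{\epsilon}$ and on $R\cup\bar R$ and have shown that the directional derivatives of $L_{\epsilon}$ converge to those of $L$. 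The paper supplies exactly this bridge: it evaluates on Fenchel--Nielsen twists along closed geodesics $\beta\cup\rho(\beta)$ contained in the thick part, deformations defined on $R_{\epsilon}$ and on the limit, shows the twist derivatives of the distances between collar boundaries converge to the twist derivatives of the distances between horocycles, and then invokes that reflection-even twists span the reflection-even tangent space; your parenthetical appeal to passing to the limit in the $R_{\epsilon}$ duality relation needs the same device. Relatedly, ``bounded, hence $L^2$'' over-reads Lemma \ref{idglfb}, which bounds the harmonic Beltrami differentials only on thick subsets; integrability of $\sum\mathfrak a_j d\ell_{\alpha_j}$ near the cusps, i.e.\ membership in $Q(R)$ and the cancellation of the double poles of the individual $d\ell_{\alpha_j}$, comes out of the identification with the differential of the smooth function $L$, not from the lemma alone. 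So the spanning-by-even-twists step is a missing idea, not mere assembly; with it inserted your outline matches the paper's proof.
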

\begin{proof}  We first observe that $L$ is a differentiable function on the $PSL(2;\mathbb R)$ representation space.  For the reference surface $F$, a simple loop $\delta\in\pi_1(F)$ about the cusp has representation into $PSL(2;\mathbb R)$ a parabolic element that generates a maximal parabolic subgroup.   Prescribing an area value (at most unity) for the quotient of a horoball by the maximal parabolic subgroup determines a horoball and horocycle.  (The prescription is equivalent to a choice of decoration in the Penner approach \cite{Pencell,Penbk}.)  For a pair of elements of $\pi_1(F)$ defining distinct maximal parabolic subgroups, the distance between the prescribed horocycles is a smooth function of the $PSL(2;\mathbb R)$ representation.  The length $L$ is a sum of distances between horocycles and hence a smooth function.  The differential $dL$ is an element of $Q(R)$.  In particular the integral of the element over small neighborhoods of the cusps is small.  The construction of the function and its differential is also valid for the distance between collar boundaries.   

Consider a sequence of compact surfaces $R_{\epsilon}$ with reflection symmetries obtained by doubling and opening the cusps of $R$.  From Lemma \ref{idglfb}, on thick subsets, the differentials of geodesic-lengths converge uniformly to differentials for ideal geodesics.  From Lemma \ref{bglfb},  for a balanced sum, the sum of differentials is uniformly bounded in each core collar; the integral of the sum is uniformly small over small area collars.  
As $R_{\epsilon}$ limits to $R$, the distance between collar boundaries limits to the distance between horocycles.  And for closed geodesics $\beta$ contained in the thick subsets, the Fenchel-Nielsen twists on $\beta\cup\rho(\beta)$ of $R_{\epsilon}$ converge to the twist of $R\cup\bar R$ and the twist derivatives of distance converge.  The considerations of Chatauby convergence and Lemmas \ref{idglfb} and \ref{bglfb} can be applied for the Fenchel-Nielsen twists on $\beta\cup\rho(\beta)$.   The conclusion is again that the gradient pairing integrals over small area collars and small area horoballs are uniformly small.  It follows that the pairing for a balanced sum length differential and twist converges to the limiting pairing as $\epsilon$ tends to zero.  The derivative of length converges to the derivative of length.  Reflection-even twists span the reflection-even tangent space.  The $dL$ formula is established.

The considerations for infinitesimal Thurston shears are analogous.  The deformation is smooth and by Lemma \ref{opclose} the infinitesimal deformation is a limit of opposing twists.  The opposing twists satisfy $\sum \mathfrak a_j t_{\alpha_j}=i/2\sum \mathfrak a_j\grad\ell_{\alpha_j}$ on the side of $R_{\epsilon}$ that limits to $R$.  We find the $\epsilon$ tending to zero limit by Lemmas \ref{idglfb} and \ref{bglfb}.  The conclusions follow. 
\end{proof} 

We remark that symmetry is basic to considering the $R_{\epsilon}$ to $R$ limit of the tangent-cotangent pairing.  With respect to the reflection $\rho$, the differential of the length $L(\mathcal A)$ is even, while the opposing twist and its limit are odd.  Also the 
K\"{a}hler form is odd since the reflection reverses orientation for surface integration. The above duality relation $2\omega_{WP}(\ ,\sigma_{\mathcal A})=dL(\mathcal A)$ is established for reflection even tangents of $R\cup\bar R$ and cannot be applied to evaluate a shear pairing $\omega_{WP}(\sigma_{\mathcal B},\sigma_{\mathcal A})$. 

To evaluate the pairing of Thurston shears, we introduce an elementary alternating $2$-form for coefficients summing to zero. For a balanced sequence $\{a_j\}_{j=1}^p$, we consider the partial sums $A_0=0, A_k=\sum_{j=1}^ka_j,\,1\le k\le p,$ where by hypothesis $A_p=0$.  We introduce a pairing for balanced sequences
\begin{equation}\label{2form}
\omega(\{a_j\},\{b_j\})\,=\,\frac12 \sum_{j=1}^p(A_j+A_{j-1})b_j.
\end{equation}
We explain that the pairing depends only on the joint cyclic ordering of the sequences and that the pairing is alternating.  A cyclic shift in the index $j, 1\le j\le p,$ has the effect of adding a constant to the partial sums $A_j,0\le j\le p$. The balanced condition for the sequence $\{b_j\}$ provides that the pairing is unchanged.  For the alternating property, we have summation by parts for balanced sequences $\{f_j\}$ and $\{g_j\}$ with partial sums $F_k$ and $G_k$
\[
\sum_{k=m}^{n-1}F_kg_{k+1}\,=\,F_nG_n\,-\,\sum_{k=m}^nG_kf_k.
\]
In particular we have that
\[
\sum_{j=1}^pA_jb_j\,=\,A_pB_p\,-\,\sum_{j=1}^{p-1}B_ja_{j+1}\,=\,-\,\sum_{j=1}^pB_{j-1}a_j
\]
and 
\[
\sum_{j=1}^pA_{j-1}b_j\,=\,A_pB_p\,-\,\sum_{j=1}^pB_ja_j\,=\,-\,\sum_{j=1}^pB_ja_j
\]
using that $A_0, A_p, B_0$ and $B_p$ vanish.  The pairing can be written in the alternating form  
\begin{equation}\label{2forma}
\omega(\{a_j\},\{b_j\})\,=\,\frac12 \sum_{j=1}^p A_jb_j-B_ja_j.
\end{equation}
We note that balanced sequences have an interpretation as tangents to the regular $(p-1)$-simplex and $\omega$ an interpretation as a closed $2$-form on the regular simplex. 

The form $\omega$ can be evaluated for a pair of balanced sums for a common set of disjoint ideal geodesics limiting to a cusp.  For balanced sums $\mathcal A=\sum a_j\ell_{\alpha_j}, \mathcal B=\sum b_j\ell_{\alpha_j}$ and a given cusp, consider the geodesic segments limiting to the cusp; some geodesics $\alpha_j$ may not limit to the given cusp and some may have both ends limiting to the cusp.  Choose and label a limiting geodesic as the first and enumerate limiting geodesics in the counterclockwise order about the cusp.  Evaluate the form $\omega$ on the enumerated sequences of weights $\{a_j\}$ and $\{b_j\}$.

\begin{corollary}\label{commshear}
For the balanced sums $\mathcal A=\sum a_j\ell_{\alpha_j}$ and $\mathcal B=\sum b_j\ell_{\alpha_j}$ for a common set of disjoint ideal geodesics, the shear pairing is 
\[
\omega_{WP}(\sigma_{\mathcal A},\sigma_{\mathcal B})\,=\,\frac12 \sigma_{\mathcal A}L(\mathcal B)\,=\,\frac12\sum_{\operatorname{cusps}}\omega(\{a_j\},\{b_j\}), 
\]
The Poisson bracket for the length functions $L(\mathcal A)$ and $L(\mathcal B)$ is
\[
\{L(\mathcal A),L(\mathcal B)\}\,=\,2\sum_{\operatorname{cusps}}\omega(\{a_j\},\{b_j\}).
\]
\end{corollary}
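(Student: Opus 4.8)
The plan is to deduce both displayed formulas from the single identity $\sigma_{\mathcal A}L(\mathcal B)=\sum_{\operatorname{cusps}}\omega(\{a_j\},\{b_j\})$, and to prove that identity by a local computation at each cusp.

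First I would dispose of the soft parts. Theorem \ref{twlth} applied to the balanced sum $\mathcal B$ gives the duality $2\omega_{WP}(\,\cdot\,,\sigma_{\mathcal B})=dL(\mathcal B)$ on $\mathcal T$; evaluating at the tangent vector $\sigma_{\mathcal A}$ and using $dL(\mathcal B)[\sigma_{\mathcal A}]=\sigma_{\mathcal A}L(\mathcal B)$ yields $2\omega_{WP}(\sigma_{\mathcal A},\sigma_{\mathcal B})=\sigma_{\mathcal A}L(\mathcal B)$, i.e. $\omega_{WP}(\sigma_{\mathcal A},\sigma_{\mathcal B})=\tfrac12\sigma_{\mathcal A}L(\mathcal B)$. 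It is essential to work on $\mathcal T$ itself, where $\omega_{WP}$ is nondegenerate: as the remark following Theorem \ref{twlth} warns, this pairing cannot be recovered as a limit of opposing--twist pairings on the compact approximants $R_\epsilon$, since there the opposing twists are reflection--odd and the reflection anti--invariance of $\omega_{WP}$ forces every such limit to vanish. For the Poisson bracket, Theorem \ref{twlth} identifies $\sigma_{\mathcal A}$ and $\sigma_{\mathcal B}$ as the Hamiltonian vector fields of the potentials $L(\mathcal A)/2$ and $L(\mathcal B)/2$; hence $\{L(\mathcal A)/2,L(\mathcal B)/2\}=\omega_{WP}(\sigma_{\mathcal A},\sigma_{\mathcal B})$, which rearranges to $\{L(\mathcal A),L(\mathcal B)\}=4\,\omega_{WP}(\sigma_{\mathcal A},\sigma_{\mathcal B})=2\,\sigma_{\mathcal A}L(\mathcal B)$. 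So everything reduces to evaluating $\sigma_{\mathcal A}L(\mathcal B)$.

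Write $L(\mathcal B)=\sum_k b_k\ell_{\alpha_k}$ with $\ell_{\alpha_k}=2\log\lambda_{\alpha_k}$ the signed distance between the decoration horocycles along $\alpha_k$, and let $R_t$ be the shear flow generated by $\sigma_{\mathcal A}$; since the $\alpha_j$ are disjoint, $R_t$ is obtained by cutting along each $\alpha_j$ and regluing with relative displacement $ta_j$. The first point is that $\frac{d}{dt}\big|_0\ell_{\alpha_k}$ localizes at the cusps: displacement along an $\alpha_j$ sharing no cusp with $\alpha_k$ moves neither $\alpha_k$ nor its decoration horocycles, so only the rearrangement of the cusp neighbourhoods of the two ends of $\alpha_k$ contributes. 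The second point is the local model at a cusp $c$: normalizing the universal cover so that $c$ sits at $\infty$, the ends entering $c$ lift to vertical half lines at $x_1<\dots<x_p<x_1+P$ carrying the weights $\hat a_1,\dots,\hat a_p$ and $\hat b_1,\dots,\hat b_p$ of the geodesics to which they belong, with $\sum_i\hat a_i=\sum_i\hat b_i=0$ by balancedness. In the developed picture of $R_t$, normalized to fix one strip, the strip between consecutive ends is rescaled by the exponential of $t$ times a partial sum of the $\hat a_i$; the balanced condition $\hat a_1+\dots+\hat a_p=0$ is precisely what closes up the deformed horocycle, keeping $R_t$ a complete structure. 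Reading $\ell_{\alpha_k}=2\log\lambda_{\alpha_k}$ off this picture shows that the end $\gamma_j$ of $\alpha_k$ at $c$ contributes $-\tfrac12(\hat A_{j-1}+\hat A_j)$ to $\frac{d}{dt}\big|_0\ell_{\alpha_k}$, where $\hat A_0=0$ and $\hat A_j=\hat a_1+\dots+\hat a_j$; the additive ambiguity in the starting index and the decoration--normalization term (common to all ends at $c$) both cancel after weighting by $\hat b_j$ and summing, using $\sum_i\hat b_i=0$. Summing over the ends at $c$ and comparing with the definition (\ref{2form}),
\[
\sum_{j=1}^{p}\hat b_j\,\Big({-}\tfrac12(\hat A_{j-1}+\hat A_j)\Big)\;=\;-\,\omega(\{\hat a_j\},\{\hat b_j\}),
\]
and summing over cusps (with the orientation of the cyclic ordering and the sign of displacement fixed as in the enumeration preceding the statement) gives $\sigma_{\mathcal A}L(\mathcal B)=\sum_{\operatorname{cusps}}\omega(\{a_j\},\{b_j\})$. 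Feeding this into the two identities of the previous paragraph completes the argument.

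The hard part is the local computation: pinning down the coefficient $-\tfrac12(\hat A_{j-1}+\hat A_j)$ exactly. The factor $\tfrac12$ is the average of the partial sums on the two sides of $\gamma_j$, in parallel with the appearance of the median in the shear--coordinate formula (\ref{shear}); a clean check is that the unsymmetric candidate $-\hat A_{j-1}$ is excluded by the antisymmetry of $\sigma_{\mathcal A}L(\mathcal B)=2\omega_{WP}(\sigma_{\mathcal A},\sigma_{\mathcal B})$ in $\mathcal A$ and $\mathcal B$. One must also verify that the decoration--normalization terms cancel under $\sum_k b_k$, track how $\lambda_{\alpha_k}$ behaves as the foot of the lifted geodesic is approached from its two sides, and handle separately a geodesic both of whose ends enter the same cusp (which then contributes at that cusp twice). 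The summation by parts identity (\ref{2forma}) exhibiting $\omega$ in alternating form --- the same telescoping device used in Section \ref{glfs} --- is what makes this bookkeeping transparent.
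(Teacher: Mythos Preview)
Your argument is correct and follows essentially the same route as the paper: invoke the shear--length duality of Theorem~\ref{twlth} to reduce to computing $\sigma_{\mathcal A}L(\mathcal B)$, then localize at each cusp and obtain the contribution $(A_{j-1}+A_j)/2$ to the change in length along the $j^{th}$ entering edge by averaging the horocycle displacements on its two sides (the paper's Figure~\ref{fig:shearlines} is exactly this picture). Your handling of the sign via the orientation convention and your remarks on cancellation of the decoration--normalization term under $\sum b_j=0$ are the same bookkeeping the paper implicitly uses; the paper is simply terser.
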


\begin{proof} The shear-length duality comes from Theorem \ref{twlth}.  The first line of equations is established by finding the contribution to the change in the length $L(\mathcal B)$ from the change in the determination of a closed horocycle at a cusp. We refer to the schematic Figure \ref{fig:shearlines} for the basic geometry.  
\begin{figure}[tbp] 
  \centering
  \includegraphics[bb=0 0 502 258,width=3.5in,height=1.8in,keepaspectratio]{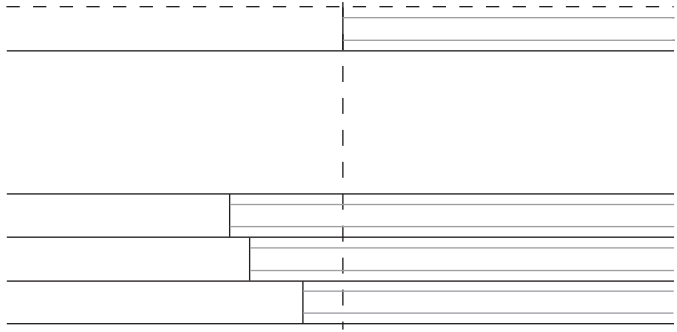}
  \caption{Shear lines at a cusp.  The longer horizontal lines represent ideal geodesics ending at a cusp on the far left; the uppermost and lowermost horizontal lines are identified.  The dotted vertical represents a closed horocycle in the undeformed hyperbolic structure and the shorter solid verticals form a closed horocycle after applying a shear $\sigma_{\mathcal A}$ for the horizontal lines.  The shorter verticals are successively displaced by horizontal increments $-a_1,-a_2,\dots, -a_p$.  The shaded horizontals indicate segments along the upper and lower edges of each ideal geodesic, segments connecting the horocycles of the deformed structure.}
  \label{fig:shearlines}
\end{figure}
To evaluate the change in length and $\omega$, geodesic segments are labeled as described above.  In the $\sigma_{\mathcal A}$ deformed hyperbolic structure, the distance between closed horocycles measured on the upper edge of an ideal geodesic agrees with the distance measured on the lower edge.  We can compute the change in distance by averaging the change for the upper and lower edges.  In Figure \ref{fig:shearlines}, the change in the first distance is $A_1/2$, while the change in the $j^{th}$ distance is $(A_j+A_{j-1})/2$.  For the weighted length $L(\mathcal B)$, the weight for the $j^{th}$ distance is $b_j$.  The change in weighted distance for the given cusp is $\sum(A_j+A_{j-1})b_j/2$, as desired.

We next consider the Poisson bracket.  The non degenerate K\"{a}hler form $\omega_{WP}$ defines an isomorphism from tangent to cotangent spaces and a dual form $\widehat{\omega_{WP}}$.  For the Hamiltonian length functions the Poisson bracket is defined as $\widehat{\omega_{WP}}(dL(\mathcal A),dL(\mathcal B))$.  By duality the pairing is $4\omega_{WP}(\sigma_{\mathcal A},\sigma_{\mathcal B})$.  The final formula follows.
\end{proof}

There is a counterpart to Theorem \ref{thrmE} for the setting of shear coordinates.\footnote{Theorem \ref{thrmE} is formulated for left twists/shears while the present results are formulated for right twists/shears.  The orientation difference explains the interchange of entries when comparing $2$-forms.}  First given an ideal triangulation $\Delta$, Theorem \ref{Penthrm} provides a bijection between balanced sum shears $\sum a_j\mathfrak s_j$ and $\caT$ as follows, for $\mathfrak s_j$ denoting the shear deformations on the $\Delta$ edges.  A {\em basepoint} $R_{\Delta}\in\caT$ in Teichm\"{u}ller space is determined by all shear coordinates vanishing.  The surface $R_{\Delta}$ is constructed by gluing ideal triangles with medians on sides always matching.  Each marked Riemann surface $R\in\caT$ is given uniquely as a balanced sum shear $\sigma_R=\sum a_j(R)\mathfrak s_j$ of the surface $R_{\Delta}$.  We show the balanced sum length functions are linear in the shear coordinates as follows.
\begin{corollary}\label{lpr}
For a balanced sum $\mathcal B=\sum b_j\ell_{\alpha_j}$ of lengths of ideal geodesics of the triangulation $\Delta$ and a marked Riemann surface $R\in\caT$ then 
\[
L(\mathcal B)(R)\,=\,\sum_{\operatorname{cusps}}\omega(\{a_j(R)\},\{b_j\}).
\]
\end{corollary}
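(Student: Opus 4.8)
The plan is to deduce the formula from Corollary~\ref{commshear}: I would integrate the shear derivative of $L(\caB)$ along the shear flow running from the basepoint $R_\Delta$ to $R$, and then check that the integration constant $L(\caB)(R_\Delta)$ vanishes. To set up, write $\delta_{\alpha_j}$ for the signed distance along $\alpha_j$ between decoration horocycles, so $L(\caB)=\sum_j 2b_j\log\lambda_{\alpha_j}=\sum_j b_j\delta_{\alpha_j}$, a decoration-independent quantity by the balanced hypothesis on $\{b_j\}$. By Theorem~\ref{Penthrm} the shear coordinates identify $\caT$ with the linear subspace of $\mathbb R^{\Delta}$ cut out by the vanishing of the cusp sums, and — this is the compatibility implicit in the presentation $\sigma_R=\sum_j a_j(R)\mathfrak s_j$ — the Thurston shears $\mathfrak s_j$ act as the coordinate translations. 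Thus $R_\Delta$ is the unique point with all shear coordinates zero, and the segment $t\mapsto R_t$ obtained by shearing $R_\Delta$ with weights $\{t\,a_j(R)\}$, $t\in[0,1]$, runs from $R_0=R_\Delta$ to $R_1=R$ with constant velocity the Thurston shear vector field $\sigma_{\caA}$ attached to $\caA=\sum_j a_j(R)\ell_{\alpha_j}$; note that $\caA$ is balanced because the shear coordinates of any surface sum to zero around each cusp.

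The integration is then immediate. By Corollary~\ref{commshear}, for the fixed balanced $\caA$ the directional derivative $\sigma_{\caA}L(\caB)=2\omega_{WP}(\sigma_{\caA},\sigma_{\caB})=\sum_{\operatorname{cusps}}\omega(\{a_j(R)\},\{b_j\})$ takes the same value at every point of $\caT$; hence $\frac{d}{dt}L(\caB)(R_t)$ is constant in $t$, and the fundamental theorem of calculus gives $L(\caB)(R)-L(\caB)(R_\Delta)=\sum_{\operatorname{cusps}}\omega(\{a_j(R)\},\{b_j\})$. (Equivalently, the proof of Corollary~\ref{commshear} already shows that the finite shear $\sigma_{\caA}$ changes $\delta_{\alpha_j}$ at a cusp $c$ by $(A_j+A_{j-1})/2$ in the counterclockwise enumeration used there, and one sums the weights $b_j$ against this.)

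It remains to prove $L(\caB)(R_\Delta)=0$, and here I would exhibit on $R_\Delta$ a decoration with all $\lambda$-lengths equal to $1$. Gluing copies of the standard ideal triangle, each carrying its symmetric decoration for which the three $\lambda$-lengths — hence by $h_{\hat a}=\lambda_a/(\lambda_b\lambda_c)$ the three $h$-lengths — all equal $1$, with horocycle segments matched across the edges, produces a decorated hyperbolic surface all of whose shear coordinates vanish by~(\ref{shear}); by the uniqueness in Theorem~\ref{Penthrm} it represents $R_\Delta$, and since $L(\caB)$ is decoration-independent, $L(\caB)(R_\Delta)=\sum_j 2b_j\log 1=0$. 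The hard part is purely bookkeeping rather than analysis: one must make sure the counterclockwise cusp enumeration, the sign conventions for right shears, and the coefficients $a_j(R)$ in $\sigma_R=\sum_j a_j(R)\mathfrak s_j$ are matched consistently among Corollary~\ref{commshear}, Theorem~\ref{Penthrm} and the definition of $L$, and that the symmetric-triangle gluing indeed realizes the median-matching basepoint $R_\Delta$; once Corollary~\ref{commshear} is in hand, no step is analytically delicate.
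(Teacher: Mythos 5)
Your proposal is correct and follows essentially the paper's own proof: integrate the constant shear derivative supplied by Corollary \ref{commshear} along the path $t\sum a_j(R)\mathfrak s_j$ from $R_{\Delta}$ to $R$, after checking that $L(\mathcal B)(R_{\Delta})=0$. The only (harmless) difference is at the basepoint: you normalize a decoration so that every $\lambda$-length equals $1$, while the paper takes a decoration with a common value $\lambda_0$ and instead uses that the weights of a balanced sum total zero, since each edge enters two cusps.
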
 
\begin{proof}
First we observe that all balanced sum length functions vanish at $R_{\Delta}$.  Given a balanced sum $\mathcal B=\sum b_j\ell_{\alpha_j}$, consider the double sum of weights 
\[
\sum_{\operatorname{cusps}}\ \sum_{\operatorname{edges\ at \ cusp}}b_{(m,n)},
\]
where the index $m$ enumerates cusps and the index $n$ enumerates half edges entering a cusp.  The balanced sum condition is the vanishing of the inner sums.  Each triangulation edge enters two cusps; the enumeration includes each triangulation edge twice.  Thus the sum of weights of a balanced sum vanishes.  Since the shear coordinates of $R_{\Delta}$ vanish, we can introduce a decoration $\caH$ for $R_{\Delta}$ such that all $h$-lengths have a common value.  It follows that all $\lambda$-lengths have a common value $\lambda_0$.  The length $L(\mathcal B)=\sum b_j2\log\lambda_0$ of the balanced sum vanishes at $R_{\Delta}$.

Given a surface $R$, the path of shears $\sigma_t=t\sum a_j(R)\mathfrak s_j$ connects the surfaces $R_{\Delta}$ and $R$. Corollary \ref{commshear} provides that the $t$-derivative of $L(\caB)$ along the path has the constant value $\sum_{\operatorname{cusps}}\omega(\{a_j(R)\},\{b_j\})$.  Integration in $t$ provides the desired formula.
\end{proof}

By Theorem \ref{Penthrm}, the shear coordinates for the edges of an ideal triangulation provide a continuous immersion into Euclidean space.  In particular the shear coordinates for appropriate subsets of edges provide continuous coordinates for Teichm\"{u}ller space.  A procedure determining appropriate subsets of edges is given in the proof of Lemma \ref{edgebasis} below.  From Theorem \ref{Penthrm}, for a subset of shear coordinates without linear relations, the differentials of the coordinates are generically linearly independent.  Furthermore from Corollary \ref{commshear}, for a subset of shear coordinates without linear relations there are sets of balanced sum length functions with constant full rank Poisson bracket pairing.  It follows from the pointwise full rank pairing that the differentials of the shear coordinates in the subset are pointwise linearly independent on Teichm\"{u}ller space.  It also follows that the shear coordinates from the subset give a basis for the vector space of balanced sums of length functions. 

In \cite[\S 4]{Wlsymp}, we found for surface fundamental group representations into $PSL(2;\mathbb R)$ that the Poisson bracket of trace functions is a sum of trace functions.  The present result describes a simpler structure.  By construction Thurston shears on a common set of ideal geodesics commute and accordingly the Poisson bracket of Hamiltonian potential length functions is constant.

We now express the $2$-form $\omega$ in terms of $h$-lengths and use the formula to give the relation to Corollary \ref{hform1}.

\begin{corollary} \label{hh}For an ideal triangulation $\Delta$, the pullback WP K\"{a}hler form is
\[
\widetilde{\omega_{WP}}\,=\,\sum_{\operatorname{cusps}}\, \sum_{j=1}^p\, 
\widetilde{h}_{j}\wedge\widetilde{h}_{j+1},
\]
where the first sum is over cusps, the second sum is over $h$-lengths at a cusp enumerated in counterclockwise cyclic order and $\widetilde{h}_{*}=d\log h_*$. For an ideal triangulation $\Delta$, the pullback WP K\"{a}hler form is also given as
\[
\widetilde{\omega_{WP}}\,=\, \frac12\sum_{e\in\Delta}d\log \lambda_e\wedge d\sigma_e.
\]
\end{corollary}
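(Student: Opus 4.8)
The plan is to derive the two identities from Penner's $\lambda$-length formula (Theorem \ref{Penlambda}) and Corollary \ref{hform1}, together with the explicit $\lambda$/$h$/$\sigma$ dictionary, cross-checked against the symplectic computation of Corollary \ref{commshear}. For the $\lambda$-$\sigma$ identity both sides are $2$-forms on $\caD\caT$, so it suffices to verify equality. Write (\ref{shear}) as $\sigma_e=\log\lambda_b+\log\lambda_d-\log\lambda_a-\log\lambda_c$, where $a,b,c,d$ are the four sides of the two triangles flanking $e$ in the clockwise convention of Figure \ref{fig:diamond}, so that $d\sigma_e=d\log\lambda_b+d\log\lambda_d-d\log\lambda_a-d\log\lambda_c$. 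Substitute into $\tfrac12\sum_e d\log\lambda_e\wedge d\sigma_e$ and reorganize the resulting sum over incident pairs (edge, flanking edge) as a sum over triangles: a triangle with clockwise sides $e_1,e_2,e_3$ contributes only the incidences internal to it, and after cancellation these collect to precisely the triangle term $d\log\lambda_{e_1}\wedge d\log\lambda_{e_2}+d\log\lambda_{e_2}\wedge d\log\lambda_{e_3}+d\log\lambda_{e_3}\wedge d\log\lambda_{e_1}$ of Theorem \ref{Penlambda}. The one subtlety is matching orientations (left versus right shears, cf.\ the footnote), which fixes the sign; this is settled on a single model configuration.

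For the $h$-length identity, record the dictionary following from $h_{\hat a}=\lambda_a/\lambda_b\lambda_c$: for an edge $e$ with endpoints $v,w$ and flanking triangle $T$ one has $h_v^T h_w^T=\lambda_e^{-2}$, hence $\log\lambda_e=-\tfrac{1}{2}(\log h_v^T+\log h_w^T)$, the two readings from the two flanking triangles agreeing by the coupling equation; combining this with (\ref{shear}) gives $\sigma_e=\log h_v^T-\log h_v^{T'}$ for the two flanking triangles $T,T'$ at either endpoint. In the counterclockwise enumeration of the edges $e_1,\dots,e_p$ ending at a cusp $c$, with $h_j=h_j^{(c)}$ the horocyclic arc between $e_j$ and $e_{j+1}$, this reads $d\log h_j-d\log h_{j-1}=\pm\, d\sigma_{e_j}$ with uniform sign, so $\log h_j=\log h_0\pm\Sigma_j$ with $\Sigma_j=\sum_{k\le j}\sigma_{e_k}$ the partial sum of shear coordinates around $c$, and $\Sigma_p=0$ by the cusp-balance of shear coordinates (Theorem \ref{Penthrm}). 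Substituting into $\sum_j\widetilde h_j\wedge\widetilde h_{j+1}$, the $d\log h_0$ contribution telescopes to zero around the cusp (again using $\sum_k d\sigma_{e_k}=0$), leaving $\sum_j d\Sigma_j\wedge d\Sigma_{j+1}$.

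It remains to show $\sum_{\mathrm{cusps}}\sum_j d\Sigma_j\wedge d\Sigma_{j+1}=\widetilde{\omega_{WP}}$. This form is built from the $d\sigma_e$, hence is the pullback of a form on $\caT$; since $\widetilde{\omega_{WP}}$ is also such a pullback, it suffices to pair both with a spanning family of $\caT$-tangents, for which I take the infinitesimal Thurston shears $\sigma_{\mathcal A},\sigma_{\mathcal B}$ of Theorem \ref{twlth}. On these, $d\sigma_{e_k}(\sigma_{\mathcal A})$ is a fixed multiple of the weight of $\mathcal A$ on $e_k$, so $d\Sigma_j(\sigma_{\mathcal A})$ is the same multiple of the partial sum $A_j$; expanding $\sum_j d\Sigma_j\wedge d\Sigma_{j+1}$ on $(\sigma_{\mathcal A},\sigma_{\mathcal B})$ and applying the summation by parts that converts (\ref{2form}) into (\ref{2forma}) recovers $\omega(\{a_j\},\{b_j\})$ up to a universal constant, while $\widetilde{\omega_{WP}}(\sigma_{\mathcal A},\sigma_{\mathcal B})=\tfrac{1}{2}\sum_{\mathrm{cusps}}\omega(\{a_j\},\{b_j\})$ by Corollary \ref{commshear}; matching them yields the $h$-length formula. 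Equivalently, the same substitution $\log h_j=\log h_0\pm\Sigma_j$ converts the sum over triangles in Corollary \ref{hform1} into the sum over cusps, the coupling equations being exactly what is needed to trade triangle-corner wedges for consecutive-corner wedges.

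Beyond the telescoping and summation by parts, the real work is bookkeeping. The orientation conventions of Figures \ref{fig:diamond} and \ref{fig:shearlines}, of (\ref{shear}), of Theorem \ref{Penlambda}, and of Corollary \ref{commshear} must all be used consistently, dealing with the left/right-shear ambiguity of the footnote; and, more seriously, every normalization factor of $2$ — shear magnitude versus shear coordinate, $\lambda=e^{\delta/2}$, the $\tfrac{2}{\pi}$ of Gardiner's formula and the $\tfrac{i}{2}$ of the shear field, and the overall calibration of \cite[\S 5]{Wlcusps} — must be tracked so that the constants in front of the two stated formulas come out as exactly $1$ and $\tfrac{1}{2}$. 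Once the calibration is pinned on a single example — a once-punctured torus with its standard three-edge triangulation, or the $PSL(2;\mathbb{Z})$ configuration of Example \ref{Dedekind} — the general identities follow formally. This last point, reconciling the several independent factors of $2$, is the step I expect to be the main obstacle.
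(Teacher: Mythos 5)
Your proposal is correct and follows essentially the paper's own route: the $\lambda$--$\sigma$ formula by expanding $d\sigma_e$ via (\ref{shear}) and regrouping to match Theorem \ref{Penlambda} (each adjacent side pair occurring twice), and the $h$-length formula by expressing the shear-coordinate data as increments of $d\log h_*$ around each cusp and evaluating on pairs of Thurston shears through the elementary $2$-form of Corollary \ref{commshear}, with the same telescoping/summation-by-parts manipulations. The only residual point, the overall normalization constants (which you defer to a model example), is likewise handled in the paper only through its stated factor-of-$2$ conventions, so this does not constitute a substantive gap.
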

\begin{proof} We begin with shear coordinates for $\caT$ and the shear pairing 
$\omega_{WP}(\sigma_{\mathcal A},\sigma_{\mathcal B})$ of Corollary \ref{commshear} above. The coefficients $\{a_j\},\{b_j\}$ are the evaluations of the differentials $\{d\sigma_e\}$ of the shear coordinates on the Thurston shears $\sigma_{\mathcal A},\sigma_{\mathcal B}$. From (\ref{shear}) and Figure \ref{fig:diamond}, the differential of a shear coordinate is $d\log h''/h'$ where $h''$ is the $h$-length clockwise from the edge and $h'$ is the $h$-length counterclockwise from the edge.  We now write the sum (\ref{2form}) at a cusp in terms of increments of $h$-lengths.  We use the notation of formula (\ref{2form}).  Introduce a decoration for the surface and write the shear coordinate increments in terms of $h$-length increments as $a_j=\widetilde{h}_{j-1}-\widetilde{h}_{j}$ and $b_j=\widetilde{g}_{j-1}-\widetilde{g}_{j}$, where $\widetilde{h}_{*}, \widetilde{g}_{*}$ are now the evaluations of the differential $d \log h_*$.  The partial sums are $A_0=0$ and $A_k=\sum_{j=1}^ka_j=\widetilde{h}_{p}-\widetilde{h}_{j}$, where with the cyclic ordering $\widetilde{h}_{0}=\widetilde{h}_{p}$ and by hypothesis $\sum_{j=1}^p\widetilde{h}_{j}=0$.  We find the contribution to $\omega$ from an individual increment $\widetilde{g}_{k}$ by considering
\begin{multline*}
(A_k+A_{k-1})b_k\,+\,(A_{k+1}+A_{k})b_{k+1}\,=\\
(2\widetilde{h}_{p}-\widetilde{h}_{k}-\widetilde{h}_{k-1})(\widetilde{g}_{k-1}-\widetilde{g}_{k})\,+\,(2\widetilde{h}_{p}-\widetilde{h}_{k+1}-\widetilde{h}_{k})(\widetilde{g}_{k}-\widetilde{g}_{k+1}).
\end{multline*}
The overall contribution is  
$(\widetilde{h}_{k-1}-\widetilde{h}_{k+1})\widetilde{g}_{k}$.  We now have that
\begin{multline*}
\omega\,=\,\frac12\sum_{k=1}^p(A_k+A_{k-1})b_k\,=\\ 
\frac12\sum_{k=1}^p\det
\begin{pmatrix} \widetilde{h}_{k-1} & \widetilde{h}_{k} \\ \widetilde{g}_{k-1} & \widetilde{g}_{k} \end{pmatrix}
 \,=\,\sum_{k=1}^pd\log h_{k-1}\wedge d\log h_k\,(\sigma_{\mathcal A},\sigma_{\mathcal B})
\end{multline*}
and the first formula is established.  

The second formula follows from Theorem \ref{Penlambda} and formal considerations.  From formula (\ref{shear}) we have that
\[
d\log\lambda_e\wedge d\sigma_e\,=\,\widetilde{\lambda}_a\wedge\widetilde{\lambda}_e+\widetilde{\lambda}_e\wedge\widetilde{\lambda}_b +
\widetilde{\lambda}_c\wedge\widetilde{\lambda}_e +
\widetilde{\lambda}_e\wedge\widetilde{\lambda}_d,
\]
where the ordered side pairs $(a,e),(e,b),(c,e)$ and $(e,d)$ are in counterclockwise order relative to their containing triangles.  The pairs are the side pairs of Figure 
\ref{fig:diamond} with one side a diagonal. Now given a pair of adjacent sides of the triangulation $\Delta$, the pair occurs in two quadrilaterals with one of the sides being a diagonal.  It follows that the sum of $d\log\lambda_e\wedge d\sigma_e$ over edges is twice the sum of Theorem \ref{Penlambda}.  The second formula follows.
\end{proof}

An observation of Joergen Andersen provides a direct relation of the above to Corollary \ref{hform1}.  The coupling equation $h_1h_2=h_3h_4$ gives the $2$-form equation $\widetilde h_1\wedge \widetilde h_2+\widetilde h_2\wedge \widetilde h_3+\widetilde h_3\wedge \widetilde h_4+\widetilde h_4\wedge \widetilde h_1=0$ for $\widetilde h_*=d\log h_*$. The relation  
$\widetilde h_1\wedge \widetilde h_2+\widetilde h_3\wedge \widetilde h_4\,=\,\widetilde h_3\wedge \widetilde h_2+\widetilde h_1\wedge \widetilde h_4$ follows. Beginning with Corollary \ref{hform1} and referring to Figure \ref{fig:diamond}, we observe the following.  For an edge $e$ of the triangulation, the wedge of $h$-lengths adjacent to $e$ of the triangles adjacent to $e$ can be replaced with the wedge of $h$-lengths for consecutive vertex sectors at the cusps at the ends of $e$.  The replacement agrees with the orientations of the formulas.  The replacement for each edge of the triangulation transforms the first adjacent by side formula to the second adjacent by vertex formula.   

\begin{example}\textup{The form $\omega$ for a once punctured torus.}\end{example}
\vspace{-.1in}
\noindent  A choice of three disjoint ideal geodesics decomposes a once punctured torus into two ideal triangles. The torus is described by edge identifying two ideal triangles to form a topological rectangle with diagonal $\gamma$, and then separately identifying the horizontal edges $\alpha$ and vertical edges $\beta$.  The pattern of geodesics at the cusp is twofold $\alpha,\gamma,\beta$.  Consider the triples of balanced weights $\{a,b,-a-b\}$ and $\{c,d,-c-d\}$ for the sequence $\alpha,\beta$ and $\gamma$. For the geodesics enumerated according to the pattern at the cusp, the sequence of partial sums for the second set of weights is $A_0=0,A_1=c,A_2=-d$ and $A_3=0$.  The sum (\ref{2form}) evaluates to $(ca+(c-d)(-a-b)+-db)=(ad-bc)$.

\vspace{.12 in}

We now follow the discussion of Bonahon \cite[Theorem 15]{BonTran} and Harer-Penner \cite[Section 2.1]{HP} for the dimension of the space of balanced sum coefficients. 

\begin{lemma}\label{edgebasis}
For a surface with cusps and a maximal configuration of disjoint ideal geodesics, the space of balanced sum coefficients has the same dimension as the Teichm\"{u}ller space.
\end{lemma}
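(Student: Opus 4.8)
The plan is to realize the space of balanced sum coefficients as the kernel of an explicit incidence matrix, count its dimension, and in the process extract the edge-selection procedure referred to above. Write $\Delta$ for the given maximal configuration; by definition its complement is a union of ideal triangles, so $\Delta$ is an ideal triangulation with $|\Delta|=6g-6+3n$ edges and $4g-4+2n$ triangles. For a cusp $c$ and an edge $e\in\Delta$ let $m_{c,e}\in\{0,1,2\}$ be the number of ends of $e$ at $c$, so that $\sum_c m_{c,e}=2$ for every $e$; a coefficient vector $\{\mathfrak a_e\}$ is balanced precisely when $\sum_{e}m_{c,e}\,\mathfrak a_e=0$ for every cusp $c$. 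Hence the space $W$ of balanced coefficients is the kernel of the $n\times|\Delta|$ matrix $M=(m_{c,e})$, and $\dim W=(6g-6+3n)-\operatorname{rank}M$. Since $\dim\caT=6g-6+2n$, the Lemma amounts to the assertion $\operatorname{rank}M=n$, i.e. that the $n$ cusp relations are linearly independent. (This is also a consequence of Theorem \ref{Penthrm}, which identifies $\caT$ homeomorphically with the linear subspace $W$; but I will argue $\operatorname{rank}M=n$ directly, since the argument simultaneously yields the edge-selection procedure.)

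To prove $\operatorname{rank}M=n$, suppose a linear combination $\sum_c\lambda_c\,(\text{cusp relation }c)$ vanishes; equivalently $\lambda_{c}+\lambda_{c'}=0$ whenever an edge $e\in\Delta$ joins cusps $c,c'$, and $\lambda_c=0$ when $e$ has both ends at $c$. Introduce the \emph{cusp graph} $G$ with the cusps as vertices and the ideal geodesics of $\Delta$ as edges (an edge with both ends at one cusp being a loop). I claim $G$ is connected and either non-bipartite or has a loop, which forces $\lambda\equiv 0$. Connectedness: the triangles of $\Delta$ tile the connected surface $R$, adjacent triangles share an edge of $\Delta$, the closed walk along the three sides of a triangle joins its three ideal vertices in $G$, and the two endpoints of a shared edge are vertices of both neighbouring triangles, so chaining triangles connects any two cusps in $G$. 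Non-bipartite-or-looped: there is at least one triangle, and if $G$ has no loop then the three ideal vertices of any triangle lie at three distinct cusps, whence its three sides form a $3$-cycle in $G$. On a connected graph the relations $\lambda_c=-\lambda_{c'}$ along edges determine $\lambda$ up to one global scalar, whose sign pattern is fixed by parities of tree distances; a loop or an odd cycle annihilates that scalar, so $\lambda\equiv0$ and $\operatorname{rank}M=n$. Consequently $\dim W=6g-6+2n=\dim\caT$.

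For the edge-selection procedure, take a spanning tree $T$ of $G$ together with one further edge $e^{*}$ of $G$ whose fundamental cycle relative to $T$ is odd, or a loop of $G$ if one is present; such an $e^{*}$ exists since $G$ is non-bipartite or looped (were every non-tree fundamental cycle even, every cycle would be even). Put $S'=T\cup\{e^{*}\}$, a set of $n$ edges, and $S=\Delta\setminus S'$, a set of $6g-6+2n$ edges. The same computation restricted to the columns $S'$ shows the square submatrix $M|_{S'}$ is invertible, so $\mathbb R^{\Delta}=W\oplus\mathbb R^{S'}$ and the coordinate projection restricts to an isomorphism $W\to\mathbb R^{S}$; by Theorem \ref{Penthrm} the shears indexed by $S$ then restrict to global coordinates on $\caT$, and, by Corollary \ref{lpr}, their span is the full space of balanced sum length functions. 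The count $|S|=6g-6+2n=\dim\caT$ records the Lemma.

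The main obstacle is the step that the $n$ cusp relations are independent — equivalently that the cusp graph $G$ is connected and either non-bipartite or carries a loop. Connectedness is a routine tiling argument; the non-bipartiteness is the real point, and it rests on the observation that, absent loops, each ideal triangle contributes a genuine $3$-cycle to $G$ — precisely where it matters that $\Delta$ is a triangulation and not merely an arbitrary disjoint family of ideal geodesics.
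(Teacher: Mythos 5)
Your proof is correct and is, in substance, the paper's argument in transposed form: the paper's coalescing of $n-1$ edges of the cusp graph with incoming/outgoing labels is exactly your spanning-tree reduction with the $\pm\lambda$ sign pattern, and its ``incoming-incoming'' edge, produced from a triangle whose vertices are not all at one cusp, is precisely your loop-or-odd-$3$-cycle witness that forces the last potential dependence to vanish. The essential content in both -- connectivity of the cusp graph plus a loop or odd cycle, hence independence of the $n$ cusp relations and dimension $6g-6+3n-n=\dim\caT$ -- is identical; only the packaging (row independence of the incidence matrix versus sequential elimination of the balance conditions) differs.
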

\begin{proof}  Consider a configuration of ideal geodesics with weights as a graph with weighted edges.  The graph is connected since ideal triangles fill in the configuration to form a connected surface. We will sequentially coalesce and remove edges, each time decreasing the number of vertices, to finally obtain a single vertex graph.  For a surface with a single cusp no coalescing of edges is necessary.  Otherwise by connectedness, there is an ideal triangle with not all vertices at the same cusp.  
Begin with such a designated triangle.  If only two vertices are at distinct cusps, then we begin by coalescing an edge connecting the distinct vertices.  
If all vertices are at distinct cusps then we begin by sequentially coalescing two edges of the triangle and the third edge will not be subsequently coalesced.  We label the ends of edges as {\em incoming} or {\em outgoing} at coalesced vertices as follows.  Label the ends of edges adjoining the first vertex as {\em incoming}.  Coalesce the first designated edge, remove the weight and label the remaining ends of edges at the second vertex as {\em outgoing} for the coalesced vertex.  At the coalesced vertex the weight condition is that the sum of incoming weights equals the sum of outgoing weights.  
To continue, take a path of edges to an uncoalesced vertex and coalesce the first edge to an uncoalesced vertex along the path. 
Label the new ends of edges at the coalesced vertex as the opposite type as for the initial segment of the coalesced edge.  At the coalesced vertex the weight condition continues to be that the sum of incoming weights equals the sum of outgoing weights.  Continue coalescing edges until only a single vertex remains.  For a surface of genus $g$ with $n$ cusps, there are $6g-6+3n$ edges in a maximal configuration.  A total of $n-1$ edges are coalesced and $6g-6+2n+1$ edges remain. At least one edge of the initial designated triangle gives rise to an incoming-incoming edge of the final coalesced vertex.  The single weight sum relation is a non trivial condition for the weight on the incoming-incoming edge.  The space of weights on the final graph has the expected dimension.  
\end{proof}    


\section{The Fock shear coordinate algebra}\label{alge}

Fock and Goncharov in their quantization of Teichm\"{u}ller space introduced and worked with a Poisson algebra for the shear coordinate functions \cite{Fk,FkChk,FkGn}.  The quantization considerations begin with the Fock-Thurston Theorem that for any ideal triangulation, the corresponding shear coordinates (without the vanishing sums about cusps condition) provide a real-analytic homeomorphism of the holed Teichm\"{u}ller space to Euclidean space \cite[Chap. 4, Theorem 4.4]{Penbk}.  Fock proposed a Poisson structure by introducing a natural bivector, an exterior contravariant $2$-tensor $\eta$ and defining $\{f,g\}=\langle (df, dg),\eta\rangle$ for $f,g$ smooth functions.  A relationship to the WP K\"{a}hler form was also proposed.  A bivector defines a Poisson structure with Jacobi identity provided its Schouten-Nijenhuis tensor vanishes. 

\begin{theorem}\textup{\cite{Fk,FkChk}}  
For an ideal triangulation $\Delta$ and corresponding shear coordinates, the bivector
\[
\eta_{\Delta}\,=\,\sum_{\Delta} \frac{\partial\ }{\partial\sigma_{a}}\wedge\frac{\partial\ }{\partial\sigma_{b}}+\frac{\partial\ }{\partial\sigma_{b}}\wedge\frac{\partial\ }{\partial\sigma_{c}}+\frac{\partial\ }{\partial\sigma_{c}}\wedge\frac{\partial\ }{\partial\sigma_{a}}
\]
is natural for the holed Teichm\"{u}ller space, where the individual triangles have sides $a,b$ and $c$ in counterclockwise order.
\end{theorem}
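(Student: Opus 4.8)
The plan is to reduce the naturality assertion to the invariance of a combinatorial incidence matrix under a single Whitehead move, and then to verify that invariance by a direct computation with Fock's shear-coordinate flip formulas. The first observation is that $\eta_{\Delta}$ has \emph{constant} coefficients in the shear coordinates: from the definition, the bracket of two coordinate functions is
\[
\{\sigma_e,\sigma_f\}_{\Delta}\,=\,\langle(d\sigma_e,d\sigma_f),\eta_{\Delta}\rangle\,=\,\varepsilon^{\Delta}_{ef},
\]
where $\varepsilon^{\Delta}_{ef}$ is the number of triangles of $\Delta$ in which $f$ follows $e$ immediately in the counterclockwise cyclic order of the three sides, minus the number in which $e$ follows $f$. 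Because the coefficients are constant, the Schouten--Nijenhuis bracket $[\eta_{\Delta},\eta_{\Delta}]$ vanishes identically and $\eta_{\Delta}$ is automatically a Poisson tensor; the content of ``natural'' is then that, for a second ideal triangulation $\Delta'$, the real-analytic coordinate change between the two shear charts (the Fock--Thurston theorem recalled above, here with unconstrained coordinates on the holed Teichm\"{u}ller space) intertwines $\eta_{\Delta}$ and $\eta_{\Delta'}$. Equivalently, since a Poisson structure is determined by the brackets of coordinate functions, one only needs to check that the bracket of two $\Delta'$-shear coordinates, evaluated against $\eta_{\Delta}$, equals $\varepsilon^{\Delta'}_{ef}$. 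By the classical Whitehead theorem \cite[Chap.~2, Lemma~1.4]{Penbk}, any two ideal triangulations with common vertices are joined by finitely many flips, so it suffices to treat a single flip.

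For the flip I would fix the quadrilateral with boundary edges $a,b,c,d$ in counterclockwise order and diagonal $e$ separating the triangles $\{a,b,e\}$ and $\{c,d,e\}$, with $e'$ the replacement diagonal and new triangles $\{b,c,e'\}$ and $\{d,a,e'\}$. Formula (\ref{shear}) together with the Ptolemy relation $\lambda_{13}\lambda_{24}=\lambda_{12}\lambda_{34}+\lambda_{14}\lambda_{23}$ gives Fock's transformation of shear coordinates: $\sigma_{e'}=-\sigma_e$, each frame edge acquires an increment of the form $\pm\varphi(\pm\sigma_e)$ with $\varphi(x)=\log(1+e^x)$ (the signs recording the left/right position of that edge in its adjacent triangle), and the shear coordinate of every edge outside the quadrilateral is unchanged. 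Writing this map as $\sigma'_i=F_i(\sigma)$ and using that the bracket with $\eta_{\Delta}$ is constant-coefficient, one has
\[
\{\sigma'_i,\sigma'_j\}_{\Delta}\,=\,\sum_{k,l}\frac{\partial F_i}{\partial\sigma_k}\,\frac{\partial F_j}{\partial\sigma_l}\,\varepsilon^{\Delta}_{kl},
\]
and the goal is to see that this is the constant $\varepsilon^{\Delta'}_{ij}$ read off from the two new triangles. The only incidences that differ are those involving $e$ (which reverse sign) and those among the frame edges $\{a,b,c,d\}$, and the passage $\varepsilon^{\Delta}\to\varepsilon^{\Delta'}$ is precisely the cluster $\mathcal X$-mutation rule
\[
\varepsilon^{\Delta'}_{ie}\,=\,-\varepsilon^{\Delta}_{ie},\qquad
\varepsilon^{\Delta'}_{ij}\,=\,\varepsilon^{\Delta}_{ij}+\tfrac12\bigl(|\varepsilon^{\Delta}_{ie}|\,\varepsilon^{\Delta}_{ej}+\varepsilon^{\Delta}_{ie}\,|\varepsilon^{\Delta}_{ej}|\bigr)\quad(i,j\neq e),
\]
so everything reduces to one identity on the $5\times5$ block of incidences supported on $\{a,b,c,d,e\}$.

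I expect this block identity to be the only genuine obstacle, and the reason it is not formal is that the flip map is transcendental through $\varphi$ while the brackets on both sides are constant: one is asserting that all the $\varphi'(\sigma_e)$-dependent terms in the sum above cancel. That cancellation is forced by the local combinatorics of the quadrilateral, namely that $e$ enters its two adjacent triangles with opposite induced orientations, so $\varepsilon^{\Delta}_{ae}$ and $\varepsilon^{\Delta}_{be}$ (and similarly $\varepsilon^{\Delta}_{ce}$ and $\varepsilon^{\Delta}_{de}$) have opposite signs; pairing the $\varphi'$-terms off with these relations leaves a constant equal to the right-hand side. I would carry this out as a short finite enumeration of the incidence patterns of that block, paying particular attention to the degenerate configurations in which two of $a,b,c,d$ coincide --- a frame edge traversed twice, or a self-folded triangle --- where the opposite-sign bookkeeping must be counted with multiplicity. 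With the block identity in hand, naturality for a single flip is complete, since an edge outside the quadrilateral is incident to exactly the same triangles before and after and so its bracket with every coordinate is unaffected; composing flips then gives the general change of triangulation, and since the mapping class group acts by such changes together with permutations (Theorem \ref{Penthrm}), $\eta_{\Delta}$ is in particular mapping-class-group invariant. As a consistency check, the Casimir functions of $\eta_{\Delta}$ --- the sums of shears around the holes --- also transform correctly under a flip, and one can cross-check the entire statement against Corollaries \ref{hform1} and \ref{hh}, where on a symplectic leaf the dual picture is the intrinsic, triangulation-independent form $\widetilde{\omega_{WP}}$.
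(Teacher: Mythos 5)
The paper does not actually prove this statement: it is quoted from Fock and Fock--Chekhov, and for the substance of ``naturality'' the paper immediately defers to Penner's book, which proves that the bivector is independent of the triangulation. So there is no internal proof to compare against; measured against the cited sources, your route is essentially the standard one (and close to Penner's): observe that $\eta_{\Delta}$ is constant-coefficient in the shear chart, hence automatically Poisson, reduce triangulation-independence to a single Whitehead flip, and check that the flip map $\sigma'_i=\sigma_i+\varepsilon^{\Delta}_{ie}\log(1+e^{\pm\sigma_e})$, $\sigma'_{e}=-\sigma_e$ intertwines the two constant brackets, which is exactly the statement that the cluster $\mathcal X$-mutation is log-canonical-Poisson. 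Your identification of the mechanism for the cancellation of the $\varphi'(\sigma_e)$-terms (the opposite signs of $\varepsilon^{\Delta}_{\cdot e}$ on the two sides of $e$) is the right one, and the reduction to edges meeting the quadrilateral is correct since $\varepsilon^{\Delta}_{ge}=0$ for outside edges. Two caveats keep this a plan rather than a proof as written: the central $5\times5$ block identity is asserted, not carried out, and it is the entire content of the theorem, so the finite enumeration must actually be done, including the configurations with $|\varepsilon^{\Delta}_{ij}|=2$ and coinciding or self-folded sides where the naive ``flip equals matrix mutation'' dictionary needs the multiplicity bookkeeping you mention (these are precisely the degenerations the paper itself sidesteps in its own Theorem \ref{FkWP} by excluding $(g,n)=(0,3),(1,1)$). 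With that enumeration supplied, your argument is complete and also yields the mapping-class-group equivariance and the Casimir check you note; note too that the paper's own later results (Corollary \ref{commshear} and Theorem \ref{FkWP}) give an independent, analytic route to the same invariance, since they identify the constant bracket with the WP Poisson bracket, which is defined without reference to any triangulation.
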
 

Penner gave a topological description of the bracket of shear coordinates
 \cite[pg. 81]{Penbk}, a proof that the bivector is independent of triangulation and also determined the center of the algebra \cite[Chap. 2]{Penbk}.  For the topological description of the bracket, recall the definition of the {\em fat graph} dual to an ideal triangulation.  To construct the fat graph $G$ embedded in the surface, choose a vertex interior to each triangle and connect vertices by an edge when triangles are adjacent.  The result is a trivalent graph with a cyclic ordering of edges at each vertex.  The trivalent graph is a deformation retract of the surface.  

Penner's topological description of the bracket is the following \cite[pg. 81]{Penbk}.  Consider an ideal triangulation $\Delta$ with dual fat graph spine $G$.  If $a,b\in\Delta$ are distinct edges, then let $\epsilon_{ab}$ be the number of components of the complement of $\Delta\cup G$ whose frontier contains points of $a$ and $b$, counted with a positive sign if $a$ and $b$ are consecutive in the counterclockwise order in the corresponding region, and with a negative sign if $a$ and $b$ are consecutive in the clockwise order.\footnote{We have reversed Penner's original sign convention given that his bivector has sides enumerated in a clockwise order, while Fock's bivector has sides enumerated in a counterclockwise order.}  Setting $\epsilon_{aa}=0$ for each $a\in\Delta$, $\epsilon_{ab}$ takes the possible values $0,\pm 1,\pm2$ and comprises a skew-symmetric matrix indexed by $\Delta$.  The quantity $\epsilon_{ab}$ is the count of oriented vertex sectors jointly bounded by $a$ and $b$. 

\begin{definition} The Fock shear coordinate algebra is defined by the bracket 
$\{\sigma_a,\sigma_b\}\,=\,\epsilon_{ab}$ for $a,b\in\Delta$.
\end{definition} 

From formula (\ref{shear}) and Figure \ref{fig:diamond}, a shear coordinate is a balanced sum of length functions.  For Riemann surfaces with cusps the WP Poisson bracket of sums of length functions is given in Corollary \ref{commshear} in terms of weights and the form $\omega$.  We evaluate $\omega$ for quadrilaterals and find that the evaluation agrees with Penner's topological description of the count $\epsilon_{ab}$.  

\begin{theorem}\label{FkWP}  The Fock shear coordinate algebra is the WP Poisson algebra.  The Fock shear coordinate bracket is given by the form $\omega$.
\end{theorem}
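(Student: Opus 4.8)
The plan is to convert $\{\sigma_a,\sigma_b\}$ into an evaluation of the elementary form $\omega$ by way of Corollary \ref{commshear}, and then to check by a finite combinatorial computation that this evaluation reproduces Penner's signed sector count $\epsilon_{ab}$. First I would record, from formula (\ref{shear}) and Figure \ref{fig:diamond}, that for an edge $e\in\Delta$ with dual quadrilateral having sides $a,b,c,d$ in cyclic order and diagonal $e$, the shear coordinate $\sigma_e$ is a fixed multiple of the balanced sum length function $\mathcal C_e=\ell_b+\ell_d-\ell_a-\ell_c$ supported on the four sides, the coefficients alternating in sign around the quadrilateral; balancedness at each cusp is the observation recorded after (\ref{shear}). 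Since the $\mathcal C_e$ all live on the common disjoint set $\Delta$, bilinearity of the WP Poisson bracket together with Corollary \ref{commshear} expresses $\{\sigma_a,\sigma_b\}$ as a fixed multiple of $\sum_{\operatorname{cusps}}\omega(\{c^a_j\},\{c^b_j\})$, where $\{c^a_j\},\{c^b_j\}$ are the coefficient sequences of $\mathcal C_a,\mathcal C_b$ read counterclockwise around each cusp and padded with zeros on the remaining edges there. In particular the bracket is constant on $\caT$, as it must be to equal $\epsilon_{ab}$, and it remains only to evaluate the combinatorial quantity.

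Next I would fix a cusp $\kappa$, enumerate the edges of $\Delta$ at $\kappa$ in counterclockwise cyclic order $e_1,\dots,e_m$, and note that the consecutive pairs $(e_k,e_{k+1})$ are exactly the vertex sectors at $\kappa$, so that $\epsilon_{ab}=\sum_\kappa\epsilon_{ab}^{(\kappa)}$ with $\epsilon_{ab}^{(\kappa)}$ the signed tally of sectors flanked by $a$ and $b$. The support of $\mathcal C_a$ restricted to the edges at $\kappa$ is empty unless a vertex of the quadrilateral $Q_a$ lies at $\kappa$; when $a$ has exactly one endpoint at $\kappa$ it consists of the two sides of $Q_a$ flanking $a$ there, with $a$ itself carrying coefficient $0$ and lying between them, and those two sides are consecutive sides of $Q_a$, hence carry opposite $\mathcal C_a$-coefficients; the remaining possibilities ($a$ a loop at $\kappa$, or $a$ missing $\kappa$ but $Q_a$ having an apex vertex there) are handled the same way, the support always being a cyclically contiguous block of edges at $\kappa$ with alternating signs. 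Consequently the partial sums $C^a_j$ take only the values $0,\pm1$. Plugging these data into the alternating form (\ref{2forma}), $\omega(\{c^a_j\},\{c^b_j\})=\tfrac12\sum_j(C^a_j c^b_j-C^b_j c^a_j)$, I would run the case analysis on the relative cyclic position at $\kappa$ of the $Q_a$- and $Q_b$-blocks: the model case is that in which $a$ and $b$ are adjacent at $\kappa$, so that cyclically one meets a side $a^-$ of $Q_a$, then $a$ (the diagonal of $Q_a$ and a side of $Q_b$), then $b$ (the diagonal of $Q_b$ and a side of $Q_a$), then a side $b^+$ of $Q_b$; a direct computation with the partial sums shows such an adjacency contributes $\pm1$, the sign fixed by whether $a$ precedes or follows $b$ counterclockwise, while every other overlap of the two blocks cancels in the antisymmetrized summand $C^a_j c^b_j-C^b_j c^a_j$.

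Finally I would match the result with Penner's description: a nonzero contribution at $\kappa$ arises exactly at a vertex sector whose two bounding edges are $a$ and $b$, i.e.\ exactly the sectors counted by $\epsilon_{ab}^{(\kappa)}$, with signs agreeing once the counterclockwise/right-earthquake orientation convention is fixed, which is the reconciliation recorded in the footnote reversing Penner's sign convention. Summing over cusps identifies $\sum_{\operatorname{cusps}}\omega(\{c^a_j\},\{c^b_j\})$ with $\epsilon_{ab}$ up to the standing factor-of-$2$ normalization of \cite[\S 5]{Wlcusps}, giving $\{\sigma_a,\sigma_b\}=\epsilon_{ab}$; the second assertion of the theorem is this same identity read backwards, and triangulation independence of the bracket is Penner's \cite[Chap.\ 2]{Penbk}, so the WP Poisson algebra and the Fock algebra coincide. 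The main obstacle is precisely the bookkeeping in the two middle paragraphs: showing that the sectors at which the $Q_a$- and $Q_b$-blocks overlap without $a,b$ being the flanking pair contribute nothing after antisymmetrization, disposing of the degenerate configurations (a loop at a single cusp, an edge bounding two sides of one triangle, $a$ and $b$ sharing a quadrilateral, coincident apex vertices), and pinning down the signs and the numerical factor against both Penner's and Fock's conventions.
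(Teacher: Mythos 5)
Your overall strategy coincides with the paper's: write each $\sigma_e$ via (\ref{shear}) as a balanced sum with weights $0,\pm1$ supported on the sides of its quadrilateral, invoke Corollary \ref{commshear} to express $\{\sigma_a,\sigma_b\}$ through $\sum_{\operatorname{cusps}}\omega$, note constancy of the bracket, and then match a finite combinatorial evaluation against Penner's signed sector count $\epsilon_{ab}$, attending to degenerate quadrilaterals and the normalization conventions. The genuine gap is in the predicted outcome of that evaluation, which is precisely the content of the proof beyond quoting Corollary \ref{commshear}. Your two key claims --- that a nonzero contribution at a cusp arises only at a vertex sector flanked by $a$ and $b$, and that each adjacency contributes $\pm1$ while ``every other overlap of the two blocks cancels in the antisymmetrized summand'' --- are false. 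When $a$ and $b$ share a triangle $T$, the third side $g$ of $T$ is a boundary side of both quadrilaterals $Q_a$ and $Q_b$ and so carries nonzero weight in both balanced sums. At each of the two vertices of $T$ not lying on both $a$ and $b$, the restricted weight sequences in counterclockwise order are shifted blocks of the form $(+1,-1,0)$ and $(-1,0,+1)$, and (\ref{2forma}) evaluates there to $\mp\tfrac12$, not $0$; these do not cancel but reinforce the $\mp1$ coming from the shared sector. Carrying the computation over all vertices of $T$ gives a per-adjacency total of $\pm2$ for $\sum_{\operatorname{cusps}}\omega$, which is exactly the value the paper obtains in its quadrilateral bookkeeping (the $[2],[2]$ bracket values on the diagonal in Figure \ref{fig:diamondwt} and Table \ref{parsum}), not the $\pm1$ you assert.

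This is not a cosmetic discrepancy: the last step of the argument is to reconcile the computed bracket with $\epsilon_{ab}$ under the $2\omega_{WP}(\ ,t_*)=d\ell_*$ normalization (the paper's ``fourfold'' accounting against Fock and Penner), and with a per-adjacency total of $\pm1$ your normalization check comes out wrong by a factor of $2$, so you would either conclude a false mismatch or absorb the error incorrectly into the conventions you explicitly leave to be pinned down. The repair is to drop the localization assumption and, as the paper does, track the joint support of the two weight systems at every vertex of the shared triangle (and at the endpoints of a common boundary edge of $Q_a$ and $Q_b$ in the non-adjacent case, where the contributions do cancel); your per-cusp antisymmetrized setup is perfectly adequate for this, it simply does not produce the cancellations claimed. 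Your local computation at the shared cusp itself is correct, and the remaining items you flag (degenerate quadrilaterals, the sign reversal in Penner's convention, triangulation independence) are handled as in the paper.
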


\begin{proof}
We begin with Corollary \ref{commshear} providing that the Poisson bracket of the shear coordinates for edges $e,f$ is $\{\sigma_e,\sigma_f\}=2\sum_{\operatorname{cusps}}\omega(\{a_j\},\{b_j\})$, for $\{a_j\},\,\{b_j\}$ the weights for the shears as sums of lengths of ideal geodesics.  The matter is to evaluate the sum (\ref{2form}) for $\omega$ for the possible configurations.  We first consider the case of the quadrilateral for the side $e$ embedded in the surface and then describe necessary modifications for sides of the quadrilateral coinciding.  The quadrilateral with weights for the edge $e$ is given in Figure \ref{fig:diamondwt}.

\begin{figure}[htb] 
  \centering
  \includegraphics[bb=0 0 551 573,width=3in,height=3.12in,keepaspectratio]{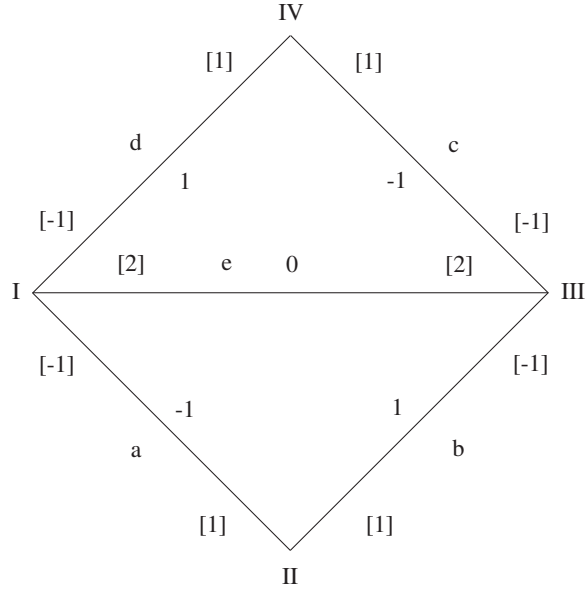}
  \caption{The quadrilateral for a triangulation edge $e$ following formula (\ref{shear}). The quadrilateral sides are labeled by lower case letters and vertices are labeled by Roman numerals.  The edge weights $0,\pm1$ refer to expressing the $e$ shear coordinate as a balanced sum of edge lengths.  The numbers in square brackets are the sums $A_j+A_{j-1}$.}
  \label{fig:diamondwt}
\end{figure}
  
Referring to formula (\ref{2form}), the first calculation is for the partial sums $A_j$ of edge weights.  At a vertex, edges are enumerated for summation in the counterclockwise order with the {\em first} edge being the clockwise most edge.  Normalize the partial sums to be zero for the not listed edges preceding the first edge.  The partial sums by vertex and in counterclockwise order are given in Table \ref{parsum}.  The second calculation is for the sums $A_j+A_{j-1}$ of partial sums about vertices.  The sums are given in Figure \ref{fig:diamondwt} by the numbers in square brackets; again sums vanish for edges not listed.  Now we are ready to consider the configuration of the quadrilateral for the edge $f$ and the sum of weights $\frac12(A_j+A_{j-1})b_j$.  The weights for $f$ are again $0,\pm1$ as in Figure \ref{fig:diamondwt}.  The edges $e$ and $f$ are necessarily distinct.  First consider that $f$ coincides with a boundary edge of the $e$ quadrilateral.  In this case the diagonal edge weight $0$ for $f$ is multiplied by the $[\pm1]$ boundary edge weights for $e$ and added to the $\pm1$ boundary edge $f$ weight times $1/2$ the sum of the $[2]$ and $[2]$ diagonal weights for $e$.  The result is $\pm2$ with the positive sign if $f$ is counterclockwise from $e$.  Now consider the case that the $e$ and $f$ quadrilaterals are either disjoint or intersect along a boundary edge. In the case of intersection along a boundary, the vanishing sum $[1]+[-1]$ of $e$ boundary weights gives a vanishing overall contribution.   This completes the calculation if the quadrilateral of $e$ is embedded. 

In general a pair of sides of the quadrilateral of $e$ could coincide; we do not consider the special cases $(g,n)=(0,3)$ or $(1,1)$ where two side pairs coincide.  A pair of adjacent sides could coincide by a $3/4$ rotation about the common vertex or opposite sides could coincide by a translation.  When sides coincide the contribution to $\omega$ is found by adding the contributions from each of the relative configurations for the quadrilateral of $f$.  The result will be $0,\pm4$ according to adjacent or opposite sides coinciding and the $e,f$ orientation.  As already noted, we are using the adjustment \cite[\S 5]{Wlcusps} to our formulas $2\omega_{WP}(\ ,t_*)=d\ell_*$ in place of $\omega_{WP}(\ ,t_*)=d\ell_*$ systematically used by Penner and Fock.  The consequence is that our shear pairing is fourfold the Fock and Penner calculations. With this information, the shear pairing evaluations correspond and the proof is complete.  

\begin{table}
\begin{center}
\renewcommand{\arraystretch}{1.3}
\begin{tabular}{|c|c|c|c|}
\hline
Vertex & $A_1$ &  $A_2$ & $A_3$ \\ \hline
I  & -1  & -1  & 0 \\ \hline
II  & 1  & 0 & \\ \hline
III  & -1  & -1  & 0 \\ \hline
IV  & 1  & 0 & \\ \hline
\end{tabular}
\caption{Partial weight sums in counterclockwise order about vertices.}
\label{parsum}
\end{center}
\end{table}

\end{proof}

\section{The norm of a length gradient for a collar crossing geodesic}\label{riemmgeom}

We continue to consider compact surfaces with crossing geodesics $\alpha$ and a reflection symmetry, see Figure \ref{fig:crossing}. We consider surfaces $R_{\epsilon}$ obtained by doubling a surface with cusps with ideal geodesics $\alpha$, and opening cusps to obtain short length core geodesics $\gamma$.  We are interested in the products of the gradients $\grad\lla$ and $\grad\llg$.  Theorem \ref{gradpr} and Lemma \ref{bglfb} can be combined to provide expansions for the pairings
\[
\langle\grad\llg,\grad\llg\rangle\,=\,\frac{2}{\pi}\llg\,+\,O(\llg^4)
\]
and
\[
\langle\grad\lla,\grad\llg\rangle\,=\,\frac{-4}{\pi}(\#\alpha\cap\gamma)\,+\,O(\llg^2).
\]
Considerations of Chatauby convergence and sums of the differential $\Omega^2$ from Section 2 suggest the heuristic expansion $\grad\ell_{\alpha}=c_{\alpha}(\ell_{\gamma})\grad\ell_{\gamma}+\overline{\psi(\ell_{\gamma})}(ds^2)^{-1}$ with $\psi(\ell_{\gamma})\in Q(R_{\epsilon})$ converging to $\psi(0)\in Q(R\cup\bar R)$.  A simple argument provides that $\psi(0)$ is orthogonal to the limit of $\grad\ell_{\gamma}$. The above pairing formulas and heuristic then suggest an expansion

\[
\langle\grad\lla,\grad\lla\rangle\,=\,\frac{8}{\pi\llg}(\#\alpha\cap\gamma)^2\,+\,O(1).
\]
The divergence of the pairing corresponds to the geometry.  The limit of $d\lla$ is formally the differential of length of an ideal geodesic and is a holomorphic quadratic differential with double poles at cusps.  The limit is not an element of $Q(R)$.  Also the limiting infinitesimal deformation $\grad\ell_{\alpha}$ corresponds to opening cusps and has infinite WP norm.

We would like to now use the gradient pairing formula, Theorem \ref{gradpr}, to find the WP pairing for balanced sums of gradients of lengths of ideal geodesics.  The above considerations show that a pairing formula involves canceling divergences in $\llg$. The divergences appear directly in evaluating the formula.  The crossing geodesic $\alpha$ is orthogonal to the collar core $\gamma$.  Arcs along $\gamma$ connect the intersection points with $\alpha$.  Each connecting arc provides a summand for the Theorem \ref{gradpr} evaluation.  The connecting arcs along $\gamma$ occur in families; a family consists of a simple arc and the additional arcs obtained by adjoining complete circuits of $\gamma$.  With $\llg$ tending to zero and the summand $R(\cosh \mbox{dist})\approx2\log2/\mbox{dist}$ for small distance, there is an immediate divergence.   We consider the sequence of lengths as a partition for a Riemann sum and find the $\llg$-asymptotics of the sum. 

The resulting formulas involve an elementary function, a reduced length for an ideal geodesic and a reduced connecting arcs sum formula.  
\begin{definition}
For $0\le a\le 1$, define the function $\lambda(a)=a(1-a)/(2\sin\pi a)$ with value given by continuity at the interval endpoints.  For a crossing geodesic $\alpha$ on a compact surface $R$ with reflection symmetry, the reduced length $\red(\lla)$ is the signed length of the segment connecting length $1$ boundaries of the complement of collars about core geodesics.   For an ideal geodesic $\alpha$ on a surface with cusps, the reduced length $\red(\lla)$ is the signed length of the segment of $\alpha$ connecting the length $1$ horocycles about the limiting cusps. 
\end{definition}

The function $\lambda(a)$ is symmetric about $a=1/2$ and satisfies $1/8\le\lambda\le 1/2\pi$.  For a pair of points $p,q$ on a circle, we write $\lambda(p,q)$ for the evaluation using the fractional part of the segment from $p$ to $q$.  For a hyperbolic surface without cone points the length $1$ horocycles are embedded circles bounding disjoint cusp regions and $\red(\lla)$ is non negative.  For surfaces with cone points, the reduced length can be negative.  

For crossing geodesics $\alpha,\beta$ on a surface with reflection symmetry or ideal geodesics $\alpha,\beta$ on a surface with cusps, we will write 
\[
{\sum}^{red}_{\alpha\operatorname{ to }\beta}\mathcal R
\]
for the reduced sum over homotopy classes rel the closed sets $\alpha,\beta$ of arcs connecting $\alpha$ to $\beta$, that are not homotopic to arcs along a core $\gamma$ or along a horocycle.  For the double of a surface with cusps, the symmetric homotopy classes are even with respect to the reflection; for this situation the sum is only over arcs with representatives on a chosen side of the surface.  Each geodesic representative for the reduced sum intersects the thick subset of the surface and the reduced sum includes any intersection points of the ideal geodesics $\alpha$ and $\beta$.  We assume the main result Theorem \ref{shpr} and illustrate the approach with the example of a single core geodesic.  The general formula depends on the pattern of crossing geodesics. 
\begin{example}\textup{Expansion of the WP gradient pairing for crossing geodesics $\alpha,\beta$ and a single core geodesic $\gamma$.}  For the core intersections $\alpha\cap\gamma=\{a_1,a_2\},\, \beta\cap\gamma=\{b_1,b_2\}$ and a given positive constant $c$ then
\begin{multline*}
\langle\grad\lla,\grad\lla\rangle\,= \\ \,\frac{2}{\pi}\big(\frac{16}{\llg}\,+\,\red(\lla)\,+\,4\,+\,2\sum_{(a_i,a_j)}\log\lambda(a_i,a_j)\big)\,+\, 2\,{\sum}^{red}_{\alpha\operatorname{ to }\alpha}\mathcal R\,+\,O(\llg^{1-c})
\end{multline*}
and for $\alpha\ne\beta$
\begin{multline*}
\langle\grad\lla,\grad\llb\rangle\,= \\ \,\frac{2}{\pi}\big(\frac{16}{\llg}\,+\,2\sum_{(a_i,b_j)}\log\lambda(a_i,b_j)\big)\,+\, 2\,{\sum}^{red}_{\alpha\operatorname{ to }\beta}\mathcal R\,+\,O(\llg^{1-c}).
\end{multline*}
\end{example}

We are ready to consider that pairings of balanced sums on a surface with cusps are the limits of pairings of balanced sums on approximating symmetric compact surfaces. 
The balanced sum condition will serve to cancel the universal $16/\llg$ leading divergence terms.  To compare formulas note that a surface with cusps represents half of a compact surface.  It is also important that remainder terms as in the example tend to zero with  $\llg$ .  

We state the main result.  For a surface with cusps, the sum over core geodesic intersections is replaced with a double sum.  First, a sum over cusps and second, a sum over ordered pairs of ideal geodesic segments limiting to a cusp. Ideal geodesics are orthogonal to horocycles.   The fractional part of a horocycle defined by a pair of ideal geodesics is independent of the choice of horocycle.  The geometric invariant $\lambda$ is evaluated by considering the intersections with any horocycle for the cusp.  We present the formula for the case of a torsion-free cofinite group.  

\begin{theorem}\label{shpr} \textup{The ideal geodesic complex gradient pairing.} For a surface $R$ with cusps and balanced sums $\mathcal A=\sum \mathfrak a_j\ell_{\alpha_j}, \mathcal B=\sum\mathfrak b_k\ell_{\beta_k}$ of ideal geodesic length functions, the WP pairing of gradients is
\begin{multline*}
\langle\grad L(\mathcal A),\grad L(\mathcal B)\rangle\,=\,\\
\sum_{j,k}\mathfrak a_j\mathfrak b_k\bigg(\delta_{\alpha_j\beta_k} \frac{2}{\pi}(\red(\ell_{\alpha_j})+2)\,+  
\,\frac{2}{\pi}\sum_{\operatorname{cusps}}\,
\sum_{\begin{smallmatrix}\operatorname{segments\,}\tilde\alpha_j,\tilde\beta_k \\ \operatorname{ limiting\,to\,the\,cusp}\end{smallmatrix}}\log\lambda(\tilde\alpha_j,\tilde\beta_k) \\
\,+\,{\sum}^{red}_{\alpha_j\operatorname{ to }\beta_k}\mathcal R \bigg).
\end{multline*}
The first sum is over weights; the double sum is over ordered pairs of geodesic segments limiting to cusps.  The final sum is over homotopy classes rel the closed sets $\alpha_j,\beta_k$ of arcs connecting $\alpha_j$ to $\beta_k$, arcs that are not homotopic into a cusp.  For the homotopy class of an intersection $\alpha_j\cap\beta_k$, the function $\mathcal R$ is evaluated on $\cos \theta$, $\theta$ the intersection angle.  Otherwise, the function $\mathcal R$ is evaluated on the hyperbolic cosine of the length of the unique minimal connecting geodesic segment.
Twist-length duality and $J$ an isometry provide that 
$4\langle \sigma_{\mathcal A},\sigma_{\mathcal B}\rangle\,=\,\langle\grad\mathcal A,\grad\mathcal B\rangle$.  
\end{theorem}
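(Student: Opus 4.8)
The plan is to produce the pairing on $R$ as the $\epsilon\to0$ limit of the closed-geodesic pairings of Theorem~\ref{gradpr} on the approximating compact symmetric surfaces $R_{\epsilon}$ of Section~\ref{Chat}, and to identify which terms survive. Doubling $R$ across its cusps and opening the cusps to short core geodesics $\gamma$ of length $\ell_{\gamma}\to0$, the double of each ideal geodesic $\alpha_j$ becomes a closed geodesic $\alpha_{j,\epsilon}$ meeting the cores orthogonally. Since $\Gamma(R_{\epsilon})$ is torsion-free, Theorem~\ref{gradpr} gives
\[
\langle\grad\ell_{\alpha_{j,\epsilon}},\grad\ell_{\beta_{k,\epsilon}}\rangle\;=\;\frac{2}{\pi}\,\delta_{\alpha_j\beta_k}\,\ell_{\alpha_{j,\epsilon}}\;+\;\sum_{D}\mathcal R_{D},
\]
the $\mathcal R_{D}$-sum running over double cosets, i.e.\ over homotopy classes rel $\alpha_{j,\epsilon},\beta_{k,\epsilon}$ of connecting arcs. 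I would form $\sum_{j,k}\mathfrak a_j\mathfrak b_k$ of these identities and pass to the limit: Lemmas~\ref{idglfb}, \ref{bglfb} and \ref{opclose} give the uniform bounds and Chatauby convergence needed to interchange the limit with the absolutely convergent sums and with the $L^{2}$ pairing, and a doubling factor of $2$ relates the $R_{\epsilon}$ pairing on the reflection-even part to the pairing on $R$.

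Next I would split the $\mathcal R_{D}$-sum geometrically into the arcs homotopic into the collar of a single core $\gamma$ --- necessarily connecting an $\alpha_j$ and a $\beta_k$ that both cross $\gamma$ --- and all the rest. For the rest, the geodesic representative meets the thick part; only finitely many such arcs have length below any given bound, and since $R(\cosh d)=O(e^{-2d})$, Chatauby convergence $R_{\epsilon}\to R\cup\bar R$ forces their total to converge to the reduced sum $\sum^{red}_{\alpha_j\operatorname{to}\beta_k}\mathcal R$ on the doubled cusped surface. For the collar arcs, work in the coordinate in which $\gamma$ corresponds to $z\mapsto e^{\ell_{\gamma}}z$; by orthogonality a lift of $\alpha_j$ meeting a lift of $\gamma$ is a semicircle centred on that lift's endpoint, and the connecting geodesics from a fixed such lift to the concentric lifts of $\beta_k$ fall, for each ordered pair of a crossing of $\alpha_j$ and a crossing of $\beta_k$ on $\gamma$, into one family indexed by $m\in\mathbb Z$, of lengths $d_{m}=\ell_{\gamma}\,|m+a|$, where $a\in[0,1)$ is the fractional relative position of the two crossings along $\gamma$. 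Here $a=0$ exactly for a loop returning to the same crossing of $\alpha_j=\beta_k$, whose $m=0$ term is the excluded trivial arc; at $\gamma$ there are $\#(\alpha_j\cap\gamma)\cdot\#(\beta_k\cap\gamma)$ families.

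The analytic core is the $\ell_{\gamma}\to0$ asymptotics of one family sum $\sum_{m}\frac{2}{\pi}R(\cosh(\ell_{\gamma}|m+a|))$. Writing $\frac{2}{\pi}R(\cosh x)=g(x)-\frac{4}{\pi}\psi(x)\log|x|$ with $\psi$ a fixed smooth even cut-off equal to $1$ near $0$ and $g$ of class $C^{1}$ with exponential decay, the $g$-part contributes the Riemann-sum value $\frac{2}{\pi}C_{0}/\ell_{\gamma}$, $C_{0}=\int_{\mathbb R}R(\cosh x)\,dx$, up to $o(1)$ with no dependence on $a$, while the logarithmic part is evaluated by combining the small-distance expansion $R(\cosh d)=2\log2-2\log d-2+O(d^{2}\log d)$ with the Stirling identity that the zeta-regularised value of $\log\prod_{m\ge0}(m+a)+\log\prod_{m\ge1}(m-a)$ equals $\log(2\sin\pi a)$; matching the two shortest terms against the regularised tail gives $\frac{2}{\pi}\bigl(C_{0}/\ell_{\gamma}+2\log\lambda(a)\bigr)+o(1)$ with $\lambda(a)=a(1-a)/(2\sin\pi a)$, and for $a=0$ the same computation over $m\ne0$ yields additionally a divergence proportional to $\log\ell_{\gamma}$ plus the finite part $\frac{2}{\pi}(2\log\lambda(0)+\mathrm{const})$. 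In the weighted sum, the universal $\frac{2}{\pi}C_{0}/\ell_{\gamma}$ over the families at $\gamma$ carries the total coefficient $\bigl(\sum_j\mathfrak a_j\#(\alpha_j\cap\gamma)\bigr)\bigl(\sum_k\mathfrak b_k\#(\beta_k\cap\gamma)\bigr)$, which vanishes by the balanced hypothesis; the $\log\ell_{\gamma}$ divergences of the $\#(\alpha_j\cap\gamma)$ self-loop families (present only for $\alpha_j=\beta_k$) cancel the divergent half-collar contributions to $\frac{2}{\pi}\ell_{\alpha_{j,\epsilon}}$, whose finite remainder is $\frac{2}{\pi}(\red(\ell_{\alpha_j})+2)$ by the definition of reduced length and the constant of $R$. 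Collecting the surviving finite parts --- the reduced sum, the $\log\lambda$ terms over ordered pairs of segments entering each cusp, and the diagonal $\frac{2}{\pi}(\red(\ell_{\alpha_j})+2)$ --- yields the stated formula; the last assertion follows, exactly as in Theorem~\ref{gradpr}, from twist-length duality (Theorem~\ref{twlth}) and $J$ being a WP isometry.

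The main obstacle is the third step: extracting the precise finite part $\frac{4}{\pi}\log\lambda(a)$ of a family sum, and in parallel checking that every divergence matches exactly --- the $1/\ell_{\gamma}$ Riemann-sum terms against the balanced condition, the $\log\ell_{\gamma}$ terms of the self-loop families against the diagonal length --- so that after weighting and summing only the $O(\ell_{\gamma}^{1-c})$ remainder of the Example survives, uniformly in $\epsilon$. This demands care with the order of summation near the logarithmic singularity of $R(\cosh d)$ at $d=0$, uniform Euler--Maclaurin and Poisson-summation estimates for the shifted cut-off sums, and control of the half-collar geometry, for which Lemma~\ref{bglfb} and the collar estimates of \cite{Wlcbms} are the essential inputs; the residual bookkeeping --- the value of $C_{0}$ and the doubling constant --- is then routine.
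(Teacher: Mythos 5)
Your proposal is correct and follows essentially the same route as the paper: double across the cusps, open them to short core geodesics, apply Theorem \ref{gradpr} on the symmetric surfaces $R_{\epsilon}$, split the connecting-arc sum into core-circuit families and the remaining arcs (controlled by Chabauty convergence, the distant-sum/collar estimates and Lemmas \ref{idglfb}, \ref{bglfb}), and let the balanced condition cancel the $1/\ell_{\gamma}$ divergences while the diagonal length term absorbs the $\log\ell_{\gamma}$ divergences, leaving the reduced lengths, the $\log\lambda$ terms and the reduced connecting-arc sum. Your handling of the circuit asymptotics (cut-off splitting of the logarithmic singularity, Poisson summation/Euler--Maclaurin, zeta-regularized products giving $2\sin\pi a$) is a harmless variant of the paper's Section \ref{circ} computation (splitting at $N\approx\ell^{-1-\epsilon}$, Stirling's formula and the Trapezoid Rule in Theorem \ref{lasymp}), both yielding the same expansion $\frac{4}{\ell}+2\log\lambda(a)+O(\ell^{1-\epsilon})$ for a two-sided family.
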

\noindent\emph{Proof.}  Begin the consideration with compact surfaces with reflection symmetries and balanced sums of geodesic-length functions converging to a surface with cusps formally doubled across the cusps.  The approach is to show that the connecting arcs sums of Theorem \ref{gradpr} converge to the sum for the limiting surface.  The individual summands are considered in terms of the geometry of the biorthogonal connecting geodesic segments.  

Begin by normalizing the uniformizations to ensure Chabauty convergence of the deck transformation groups $\Gamma$.   For the geodesic $\alpha$, let $\tilde\alpha$ be a chosen geodesic line lift and $\langle A\rangle$ the cyclic group stabilizer.  A fundamental interval on $\tilde\alpha$ is chosen; each left $\langle A\rangle$ orbit in $\Gamma\tilde\alpha$ and $\Gamma\tilde\beta$, $\tilde\beta$ a lift of $\beta$, has a unique biorthogonal geodesic connecting segment with one endpoint in the $\tilde\alpha$ fundamental interval.  The considerations proceed in terms of the geometry of the second endpoint of the connecting segment.  The finite number of terms corresponding to endpoints in a given compact set converge.  The sums for families of connecting segments along the core geodesics provide universal divergences; the analysis is described in the next section.   
The remaining connecting segments have second endpoint outside a given compact set and the segments do not lie along core geodesics.  The remaining segments necessarily intersect the lift of the thick subset.   The remaining segments are treated according to whether the second endpoint lies in the lift of the thick or the thin subset.  In the first case, the injectivity radius is bounded away from zero and the sum of such terms is uniformly bounded by applying the distant-sum method of \cite[Chap. 8]{Wlcbms}.  In the second case, the endpoint lies in the lift of a standard collar or cusp region.  Hyperbolic geometry is used to show that the full sum over the stabilizing cyclic hyperbolic or parabolic group is bounded simply by the distance of the fundamental interval on $\tilde\alpha$ to the boundary of the region.  The distant-sum and cyclic group bounds provide that the contributions from the complement of a large compact set is sufficiently small.  The estimates for the various cases are combined to establish convergence of formulas.  

We consider the connecting segments along a given core geodesic.  We outline the approach and give a detailed treatment in the next section.  The sum for a family of connecting arcs in a given direction along a core geodesic has the form 
\[
\sum^{\infty}_{n=0}S((a+n)\ell)\quad\mbox{for}\quad S(t)=\cosh t\,\bigg(\log\frac{\cosh t+1}{\cosh t-1}\bigg)-2
\]
for $\ell$ the core length and $a\ell, a>0,$ the distance between core intersection points.  The function $S(t)$ has the initial expansion $S(t)\approx 2\log 2/t$ and for $N$ approximately $\ell^{-1-\epsilon}, \epsilon>0,$ we break up the sum
\begin{multline*}
\sum^{\infty}_{n=0}S((a+n)\ell)\,=\\  \sum^N_{n=0}2\log\frac{2}{(a+n)\ell}\,+\,\frac{1}{\ell}\sum^N_{n=0}\ell\bigg(S((a+n)\ell)-2\log\frac{2}{(a+n)\ell}\bigg)\,+\,\sum^{\infty}_{n=N+1}S((a+n)\ell).
\end{multline*}
For the first sum, we use additivity of the logarithm to obtain an expression in terms of $\log 2/\ell$ and $\log\Gamma(a+1)$ for the gamma function.  Stirling's formula is then applied.  For the second sum, half of the first and last sum terms are separated, then the Trapezoid Rule is applied to approximate the sum by an integral and an error term.  The Trapezoid Rule provides an improved approximation in $\ell$.  The integral is calculated by an antiderivative.  Finally the bound that $S(t)$ is $O(e^{-2t})$ for $t\ge t_0 >0$, provides that the third sum is exponentially small; the consequence is that for $a>0$ the original full sum has the expansion
\[
\frac{2}{\ell}\ +\ \log\frac{\Gamma(a+1)^2\ell^{2a-1}}{2^{2a}\pi}\ +\ 2a-1\ +\ O(\ell^{1-\epsilon}).
\]
The overall expansion for connecting arcs in the forward and reverse directions is obtained by combining the expansions for the values $a$ and $1-a$.  Identities for the gamma function are used to simplify the resulting formula and to obtain the function $\lambda$.  As already noted, the  $\ell$-divergence is in the leading term.  The balanced sum condition provides for the overall canceling of divergences in evaluating the gradient product. The proof  is complete. $\qedd$

\begin{corollary}
For a balanced sum $\mathcal A=\sum \mathfrak a_j\ell_{\alpha_j}$ of ideal geodesic length functions and $\beta$ a closed geodesic, the shear and twist derivative pairing is
\[
\sigma_{\mathcal A}\llb\,=\,-t_{\beta}L(\mathcal A)\,=\,\sum_j\mathfrak a_j\sum_{p\in\alpha_j\cap\beta}\cos\theta_p
\]
for the intersection angles measured from $\alpha_j$ to $\beta$.
\end{corollary}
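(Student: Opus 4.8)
The plan is to prove the two displayed equalities separately and then observe that both coincide with the cosine sum. \emph{First equality.} I would obtain $\sigma_{\mathcal A}\ell_\beta = -\,t_\beta L(\mathcal A)$ by pure symplectic bookkeeping: Theorem~\ref{twlth} gives $2\omega_{WP}(\,\cdot\,,\sigma_{\mathcal A}) = dL(\mathcal A)$, and (\ref{wpdual}) gives $2\omega_{WP}(\,\cdot\,,t_\beta) = d\ell_\beta$, so
\[
\sigma_{\mathcal A}\ell_\beta\,=\,d\ell_\beta(\sigma_{\mathcal A})\,=\,2\omega_{WP}(\sigma_{\mathcal A},t_\beta)\,=\,-\,2\omega_{WP}(t_\beta,\sigma_{\mathcal A})\,=\,-\,dL(\mathcal A)(t_\beta)\,=\,-\,t_\beta L(\mathcal A),
\]
invoking only antisymmetry of $\omega_{WP}$. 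The reflection--parity caveat of the remark following Theorem~\ref{twlth} does not intervene here: it forbids pairing two shears, whereas $t_\beta$ is the honest Fenchel--Nielsen field, which doubles to a reflection--even twist, so the evaluation $dL(\mathcal A)(t_\beta)$ is legitimate. Balancedness of $\mathcal A$ enters only to make $L(\mathcal A)$, hence $dL(\mathcal A)$, well defined and decoration--independent. (As a cross--check, expanding $dL(\mathcal A)=\sum_j\mathfrak a_j\,d\ell_{\alpha_j}$ shows $-\,t_\beta L(\mathcal A)=-\sum_j\mathfrak a_j\,t_\beta\ell_{\alpha_j}$, and $t_\beta\ell_{\alpha_j}=-t_{\alpha_j}\ell_\beta$ by the same antisymmetry, so this reduces to evaluating $\sigma_{\mathcal A}\ell_\beta$ directly.)

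\emph{Evaluation.} For the value I would use the limiting description of the shear: by Theorem~\ref{twlth}, $\sigma_{\mathcal A}=\frac{i}{2}\sum_j\mathfrak a_j\grad\ell_{\alpha_j}$ is the limit, on the compact symmetric doubles $R_\epsilon$ of Sections~\ref{Thuroppos}--\ref{results}, of the opposing Fenchel--Nielsen twists $\sum_j\mathfrak a_j\,t_{\alpha_{j,\epsilon}}$, where $\alpha_{j,\epsilon}$ is the core--crossing closed geodesic approximating $\alpha_j$. For $\epsilon$ small the closed geodesic $\beta$ lies in the thick subset of $R_\epsilon$, disjoint from the short--geodesic collars; hence $\alpha_{j,\epsilon}\cap\beta$ is a finite set of thick points, and the twist--length cosine formula recorded after the proof of Theorem~\ref{Gardtheta} (equivalently, the imaginary part of Theorem~\ref{gradpr}) gives $t_{\alpha_{j,\epsilon}}\ell_\beta=\sum_{p\in\alpha_{j,\epsilon}\cap\beta}\cos\theta_p$, the angle measured from $\alpha_{j,\epsilon}$ to $\beta$. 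Letting $\epsilon\to 0$, the Chabauty convergence $R_\epsilon\to R\cup\bar R$ of Section~\ref{Chat} together with the convergence of geodesic--length differentials on thick subsets (Lemma~\ref{idglfb}) gives $t_{\alpha_{j,\epsilon}}\ell_\beta\to t_{\alpha_j}\ell_\beta$, while $\alpha_{j,\epsilon}\to\widehat\alpha_j\cup\rho(\widehat\alpha_j)$ locally uniformly on the thick part carries each intersection point and its angle to an intersection of $\alpha_j$ with $\beta$. Summing over $j$ with weights $\mathfrak a_j$ then yields $\sigma_{\mathcal A}\ell_\beta=\sum_j\mathfrak a_j\sum_{p\in\alpha_j\cap\beta}\cos\theta_p$, with the angle from $\alpha_j$ to $\beta$; together with the first step this is the asserted chain of equalities.

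\emph{Main obstacle and an alternative.} The one point requiring care is the stability of the finite cosine sum as $\epsilon\to 0$ --- no intersection point may drift into a collar or escape toward a cusp, and the count must stay finite. Both are immediate because $\beta$ is a fixed closed geodesic, confined uniformly to a compact subset of the thick part, meeting each $\alpha_j$ transversally in finitely many points, with the collars taken arbitrarily far from $\beta$. An alternative route avoids the doubles entirely: pair $d\ell_{\alpha_j}=\frac{2}{\pi}\sum_{C\in\Gamma}\Omega_{pq}^2(Cz)$ (the ideal--geodesic differential of the definition preceding Lemma~\ref{idglfb}) against $\grad\ell_\beta$, decompose $\Gamma$ into right $\langle B\rangle$--cosets for $B$ representing $\beta$, telescope the $\langle B\rangle$--sum exactly as in the proofs of Theorems~\ref{Gardtheta} and~\ref{gradpr}, and read off the imaginary part $-2i\cos\theta_D$ from the intersecting case of the master integral~(\ref{mainint}); the non--intersecting cosets contribute real terms that drop out of the imaginary part, leaving precisely the cosine sum. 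In either approach the resulting identity is manifestly independent of the choice of decoration, consistent with its other two expressions.
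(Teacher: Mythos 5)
Your proposal is correct and follows what is evidently the paper's intended derivation: the corollary is stated there without a separate proof because, exactly as you argue, the first equality is Hamiltonian bookkeeping from the duality of Theorem \ref{twlth} together with (\ref{wpdual}) and antisymmetry of $\omega_{WP}$, while the cosine evaluation comes from the classical twist--length formula carried through the opposing-twist limit of Sections \ref{Thuroppos}--\ref{results} (Lemmas \ref{opclose} and \ref{idglfb}), with $\beta$ confined to the thick part so the intersection points and angles converge. Your reading of the reflection-parity remark is also the right one --- it only obstructs shear--shear pairings computed via the double, not the pairing of $\sigma_{\mathcal A}$ with the Fenchel--Nielsen field $t_{\beta}$ --- so no further comment is needed.
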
    
   
%

\begin{example}\label{Dedekind} \textup{A distance relation for the elliptic modular tessellation.}
\end{example}
\begin{figure}[htbp] 
  \centering
  \includegraphics[bb=0 0 624 266,width=5.1in,height=2.17in,keepaspectratio]{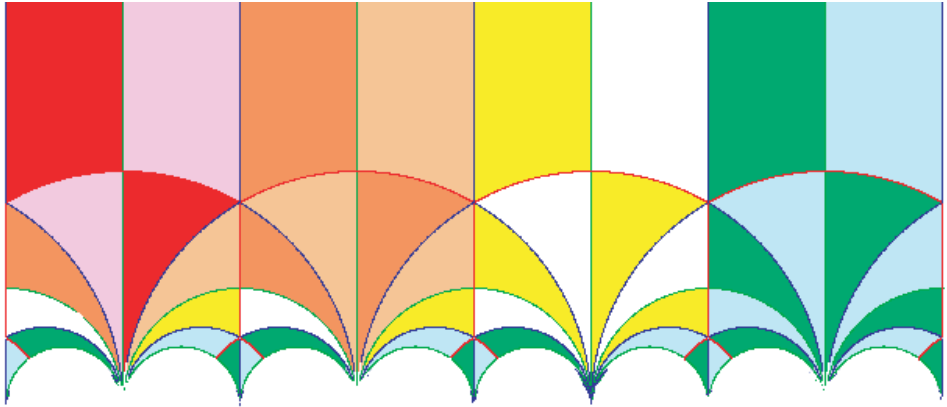}
  \caption{The Dedekind tessellation.  Graphic created by and used with permission from Gerard Westendorp.}
  \label{fig:modtess}
\end{figure}

\vspace{-.1in}
The Dedekind tessellation is the tiling of the upper half plane for the action of 
$PSL(2;\mathZ)$.  The light, respectively dark, triangle tiles form a single $PSL(2;\mathZ)$ orbit.  The reflection in the imaginary axis normalizes the group and interchanges the light and dark triangles.  The tessellation vertices are fixed points of elements of the group action.  There are two orbits for vertices.  There are also two orbits for ideal lines.   The first consists of the lines containing a single order-$2$ fixed point.  The second consists of the lines sequentially containing an order-$3$, an order-$2$ and an order-$3$ fixed point.  We refer to the types as $2$-lines and $323$-lines.  We consider the lines with weights: $w=+1$ for $323$-lines and $w=-1$ for $2$-lines.  The system of weighted lines is $PSL(2;\mathZ)$ invariant. 

The formula of Theorem \ref{shpr} provides a relation for the distances between lines for the Dedekind tessellation.  For any choice $\tilde a$ of a $323$-line and $\tilde \alpha$ of a $2$-line, we have
\[ 
\sum_{\operatorname{ultraparallels\ to\,}\tilde a}w(\eta) R(d(\tilde a,\eta))\ -  
\sum_{\operatorname{ultraparallels\ to\,}\tilde \alpha}w(\eta) R(d(\tilde \alpha,\eta)) \ =\ \log\frac{3^6\pi^4}{2^{26}}
\]
for $R(d)=u\log ((u+1)/(u-1))-2$ and $u=\cosh d$.  Ultraparallels are the tessellation lines at positive distance.  Lines at zero distance are asymptotic.  

We find the relation as an exercise in evaluating the formula of Theorem \ref{shpr}.   We begin with the geometry of the tiling quotient.   We work with the thrice-punctured sphere uniformized by the projectivized index $6$ subgroup $P\Gamma(2)\subset PSL(2;\mathZ)$ of matrices congruent to the identity modulo $2$.  A fundamental domain for the torsion-free group $P\Gamma(2)$ is given by the twelve light and dark triangles adjacent to a given largest height non vertical $323$-line.  The $P\Gamma(2)$ quotient is a tri-corner pillow with three $323$-lines, labeled $a,b,c$ and three $2$-lines, labeled $\alpha,\beta,\gamma$.  The $2$-lines separate the quotient into two ideal triangles.  A $323$-line enters a single cusp of the quotient, while a $2$-line connects two distinct cusps.  We evaluate the pairing product for the weighted balanced sum $\sigma=a+b+c-\alpha-\beta-\gamma$.  The sum is $P\Gamma(2)$ invariant, thus $\grad \sigma\in Q(P\Gamma(2))$ by Theorem \ref{twlth}.  The space of $P\Gamma(2)$ quadratic differentials is zero dimensional. The self pairing of $\grad \sigma$ is zero.

We determine the contributions for terms on the right hand side of the Theorem \ref{shpr} formula.  The evaluation corresponds to the formal expansion of the product $(a+b+c-\alpha-\beta-\gamma)^2$.  The pairing is real and the initial factor $\pi/2$ can be moved to the left hand side.  We begin with the reduced length contribution.  The $P\Gamma(2)$ cusps have width $2$; the length $1$ horocycle at infinity has height $2$.  
For a vertical $2$-line, half of the reduced length segment connects the height two horocycle to the order-$2$ fixed point at height $1$.  A $2$-line has reduced length 
$2\log 2$.  
For a vertical $323$-line, half of the reduced length segment connects the height two horocycle to the order-$2$ fixed point at height $1/2$.  
A $323$-line has reduced length $4\log 2$.  
The reduced length contributing terms of the product are $a^2+b^2+c^2+\alpha^2+\beta^2+\gamma^2$.  The total first term reduced length contribution is $18\log2 +12$.  We next consider the $\log \lambda$ contributions, which measure the geometry of the ideal geodesics limiting to cusps.  There are two reflections stabilizing each cusp.  The reflections stabilize the geodesics and provide that the intersections of the ideal geodesics with a horocycle are equally spaced and alternate by weights.  The $\log\lambda$ contributing terms of the product are
\begin{multline*}
a^2\,+\,b^2\,+\,c^2\,+\,\alpha^2\,+\,\beta^2\,+\,\gamma^2\, -\,2a\beta\,-\,2a\gamma\\ -\,2b\alpha\,-\,2b\gamma\,
-\,2c\alpha\,-\,2c\beta\,+\,2\alpha\beta\,+\,2\alpha\gamma\,+\,2\beta\gamma.
\end{multline*}
By $PSL(2;\mathZ)$ symmetry, the evaluation is the same as for $3a^2+3\alpha^2-12a\beta+6\alpha\beta$. 
The $a^2$ contribution is $2\log(\lambda(0)\lambda(1/2))$ given the two segments at a cusp; the $\alpha^2$ contribution is $2\log\lambda(0)$ given the two limiting cusps; the $a\beta$ contribution is $2\log\lambda(1/4)$ given the symmetry of $\lambda$ and the $\alpha\beta$ contribution is $\log\lambda(1/2)$.  The evaluations are $\lambda(0)=1/(2\pi)$, $\lambda(1/4)=3\sqrt2/32$ and $\lambda(1/2)=1/8$.  
The total $\log \lambda$ contribution is
\[
6\, \log\frac{1}{16\pi}\ +\ 6\, \log\frac{1}{2\pi}\ +\ -24\, \log\frac{3\sqrt2}{32}\,+\,6\,\log\frac18.
\]
We next consider the contribution from ideal geodesics intersecting. The intersection product contributing terms are $2ab+2ac+2bc-2a\alpha-2b\beta-2c\gamma$. The geodesic intersections $ab$, $ac$ and $bc$ are twofold.  From the formula the total intersection contribution is
\[
  2\cdot3\cdot  R(\cos\frac{\pi}{3})\ +\ 2\cdot3\cdot R(\cos\frac{2\pi}{3})\ -\ 2\cdot 3\cdot  R(\cos\frac{\pi}{2})=\ 6\log3\,-\, 12
\]
as follows.  The leading $2$-factors are from the formal expansion of $\sigma^2$.  The $3$-factors are from the symmetry of the triples $a,b,c$ and $\alpha,\beta,\gamma$.  The first and second terms correspond to the fact that distinct $323$-lines intersect twice.  The $R$-evaluations $R(\cos\pi/3)=(\log 3)/2-2$ and $R(\cos\pi/2)=-2$ are elementary.  The final contribution of the right hand side of the overall formula is the sum for the nontrivial connecting geodesics.   We start with the formal expansion $\sigma^2=a\sigma+b\sigma+c\sigma-\alpha\sigma-\beta\sigma -\gamma\sigma$.  By $PSL(2;\mathZ)$ symmetry the evaluation is the same as for $3a\sigma-3\alpha\sigma$.  Connecting geodesics are enumerated by lifting to the universal cover.  Given lifts $\tilde a$ and $\tilde\alpha$, the desired sums are obtained.   The overall relation now follows.  We note that the lines asymptotic to $\tilde a$ and $\tilde\alpha$ correspond to the limits of lines with connecting segments along core geodesics; the $\log \lambda$ terms account for the combined contribution of the asymptotic lines.  

\section{The geodesic circuit sum}\label{circ}
We consider the contribution to the Theorem \ref{gradpr} sum corresponding to connecting geodesics given by circuits about a fixed closed geodesic.  Such a circuit sum enters when the geodesics $\alpha$ and $\beta$ are orthogonal to a common closed geodesic.  The  summands are evaluations of the function
\[
S(t)\,=\,\cosh t\bigg(\log\frac{\cosh t +1}{\cosh t -1}\bigg)\,-\,2.
\]
The consideration is for the length parameter $\ell$ expansion of the infinite sum of circuits.  The application to Theorem \ref{shpr} requires an expansion with remainder term tending to zero for small $\ell$.  Simple analysis gives that the expansion begins with terms divergent in $\ell$.  We provide the expansion.
\begin{theorem}\label{lasymp} For $a$ and $\epsilon$ positive, the circuit sum has the expansion
\[
\sum_{n=0}^{\infty}S((a+n)\ell)\,=\,\frac{2}{\ell}\,+\,\log\frac{\Gamma(a+1)^2\ell^{2a-1}}{2^{2a}\pi}\,+\,2a-1\,+\,O(\ell^{1-\epsilon})
\]
for the gamma function $\Gamma(z)$.
\end{theorem}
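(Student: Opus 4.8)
\noindent\emph{Proof plan.} The idea is to isolate the part of $S$ responsible for the divergence, evaluate it exactly via the Gamma function, and handle the smooth remainder by the Trapezoid Rule. First I would record three facts about the summand. Integration by parts, using $\frac{d}{dt}\log\frac{\cosh t+1}{\cosh t-1}=-\frac{2}{\sinh t}$, gives the closed form $\int_0^T S(t)\,dt=\sinh T\,\log\frac{\cosh T+1}{\cosh T-1}$; letting $T\to\infty$ (e.g.\ by L'H\^opital) this yields $\int_0^\infty S(t)\,dt=2$ with $\int_0^T S(t)\,dt=2+O(e^{-2T})$. Second, the expansion $\cosh t=1+t^2/2+\cdots$ gives $S(t)=2\log\frac2t-2+O(t^2\log\frac1t)$ near $0$, so $\phi(t):=S(t)-2\log\frac2t$ extends continuously to $[0,\infty)$ with $\phi(0)=-2$, is smooth on $(0,\infty)$, and has $\phi''(t)=O(\log\frac1t)$ as $t\to0^+$ and $\phi''(t)=O(t^{-2})$ as $t\to\infty$. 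Third, $S(t)=O(e^{-2t})$ for large $t$ (since $S(t)=R(\cosh t)$ and $R(u)=O(u^{-2})$).

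Second, fix a cutoff $N$ of order $\ell^{-1-\epsilon}$ and split
\begin{multline*}
\sum_{n=0}^{\infty}S((a+n)\ell)=\sum_{n=0}^{N}2\log\frac{2}{(a+n)\ell}\\+\sum_{n=0}^{N}\phi((a+n)\ell)+\sum_{n=N+1}^{\infty}S((a+n)\ell).
\end{multline*}
The tail is a geometric-type sum of terms $O(e^{-2(a+n)\ell})$ with first term $O(e^{-c\ell^{-\epsilon}})$ and ratio $e^{-2\ell}$, hence is smaller than any power of $\ell$. The first sum is computed exactly: by additivity of the logarithm it equals $2(N+1)\log\frac2\ell-2\log\big(a(a+1)\cdots(a+N)\big)=2(N+1)\log\frac2\ell-2\log\frac{\Gamma(a+N+1)}{\Gamma(a)}$, and I expand $\log\Gamma(a+N+1)$ by Stirling's formula. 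For the second sum I apply the Trapezoid Rule,
\[
\sum_{n=0}^{N}\phi((a+n)\ell)=\frac1\ell\int_{a\ell}^{(a+N)\ell}\phi(t)\,dt+\frac{\phi(a\ell)+\phi((a+N)\ell)}{2}+E,
\]
where the error satisfies $|E|\le\frac1{12}\sum_n\ell^2\sup|\phi''|=O\big(\ell\!\int_{a\ell}^{\infty}|\phi''|\big)=O(\ell\log\tfrac1\ell)$ by the $\phi''$ bounds above (the only delicate point being that $\phi''$ blows up \emph{only} logarithmically at $0$, which is exactly enough). The integral is evaluated from the antiderivative $\int S(t)\,dt-2t\log2+2t\log t-2t$ of $\phi$ together with the closed form for $\int_0^T S$; the lower endpoint contributes only $O(\ell)$ after $\phi(a\ell)=-2+O(\ell^2\log\tfrac1\ell)$ cancels against it, while the upper endpoint carries the $N$-dependent growth $2(a+N)\log\frac{(a+N)\ell}{2e}+\log\frac{(a+N)\ell}{2}$.

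The main obstacle is the final assembly, where divergent-in-$N$ terms must cancel across the three pieces. I would check: the coefficient of $\log\frac1\ell$ collapses from a multiple of $N$ to the $N$-free value $1-2a$, yielding $\log\ell^{2a-1}$; the $\log(N+1)$ terms from the Gamma/Stirling piece combine with the $\log(a+N)$ terms from the integral and boundary term via $\log(a+N)=\log(N+1)+O(1/N)$ to a constant; and the purely linear-in-$N$ terms cancel outright. The choice $N\asymp\ell^{-1-\epsilon}$ absorbs the residual $O(1/N)$ into $O(\ell^{1-\epsilon})$, and the trapezoid and tail errors are likewise $O(\ell^{1-\epsilon})$. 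Collecting the surviving constants — the $\tfrac12\log(2\pi)$ from Stirling, the $\phi(0)=-2$ term, the $\log2$'s — and using $\Gamma(a+1)=a\Gamma(a)$ then produces the stated expansion. As a cross-check (and alternative route that sidesteps the bookkeeping but is less self-contained), one may write $\sum_n S((a+n)\ell)=\frac1{2\pi i}\int\widetilde S(s)\,\ell^{-s}\,\zeta(s,a)\,ds$ for the Hurwitz zeta function and shift the contour left: the pole of $\zeta(\cdot,a)$ at $s=1$ together with $\widetilde S(1)=\int_0^\infty S=2$ gives $\frac2\ell$, the double pole of $\widetilde S$ at $s=0$ (coming from $S(t)\sim -2\log t+2\log2-2$) together with Lerch's identity $\zeta'(0,a)=\log\frac{\Gamma(a)}{\sqrt{2\pi}}$ gives the middle term, and the next pole at $s=-2$ bounds the remainder.
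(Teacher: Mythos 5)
Your outline is essentially the paper's own argument: the same split of the sum at a cutoff $N\asymp\ell^{-1-\epsilon}$ into a logarithmic part, a Trapezoid--Rule part for $F(t)=S(t)-2\log(2/t)$ with half-weighted endpoints and a logarithmic bound on $F''$ near $0$, and a superpolynomially small tail, with the antiderivative $\sinh t\,\log\frac{\cosh t+1}{\cosh t-1}$ and Stirling's formula doing the evaluations; the Mellin--Hurwitz contour shift is an added cross-check not in the paper.

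The gap is in the last step, which you assert rather than perform: collecting the constants from your own intermediate formulas does not give the displayed expansion. Starting from your identity $a(a+1)\cdots(a+N)=\Gamma(a+N+1)/\Gamma(a)$, Stirling, and the evaluated integral, the $N$-dependent terms cancel exactly as you predict, but what survives is
\[
\frac{2}{\ell}\;+\;(2a-1)\log\ell\;+\;2\log\Gamma(a)\;-\;2a\log 2\;-\;\log\pi\;+\;2a-1,
\]
that is, a middle term $\log\bigl(\Gamma(a)^{2}\ell^{2a-1}/(2^{2a}\pi)\bigr)$, which falls short of the stated $\log\bigl(\Gamma(a+1)^{2}\ell^{2a-1}/(2^{2a}\pi)\bigr)$ by $2\log a$. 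Invoking $\Gamma(a+1)=a\Gamma(a)$ cannot close this: no term $+2\log a$ arises anywhere else in your bookkeeping (the coefficient of $\log(2/\ell)$ collapses to $1-2a$, the $\log(a+N)$ terms cancel to $-2+O(1/N)$, and the terms linear in $N$ cancel, none of which involves $\log a$). Your Mellin cross-check, if actually computed, shows the same thing: with $\widetilde S(s)=2/s^{2}+(2\log 2-2)/s+\cdots$, $\zeta(0,a)=\tfrac12-a$ and $\zeta'(0,a)=\log\bigl(\Gamma(a)/\sqrt{2\pi}\bigr)$, the residue at $s=0$ is $2\zeta'(0,a)-2\zeta(0,a)\log\ell+(2\log2-2)\zeta(0,a)$, again the $\Gamma(a)^{2}$ constant. (The paper's own write-up obtains $\Gamma(a+1)$ by writing $\prod_{n=0}^{N}(a+n)=\Gamma(a+N+1)/\Gamma(a+1)$, which is precisely the identity you corrected; a quick numerical test, e.g.\ $a=\tfrac12$, $\ell=0.1$, gives $\sum_{n\ge0}S((n+\tfrac12)\ell)\approx 19.31\approx 2/\ell-\log 2$ rather than $2/\ell+\log\tfrac18\approx 17.92$, siding with the $\Gamma(a)^{2}$ form.) So as written your plan proves the expansion with $\Gamma(a)^{2}$ in place of $\Gamma(a+1)^{2}$; to obtain the statement exactly as displayed you would have to produce a compensating $2\log a$, and there is no source for one in your decomposition. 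Apart from this final assembly, the error estimates you sketch (trapezoid error $O(\ell\log(1/\ell))$, exponentially small tail, $O(1/N)$ absorbed into $O(\ell^{1-\epsilon})$) are sound.
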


\begin{corollary} For $\epsilon$ positive, the circuit sum for $0<a<1$ has the expansion
\begin{align}
\sum_{n=-\infty}^{\infty}S((a+n)\ell)\,&=\,\frac{4}{\ell}\,+\,2\log\frac{\Gamma(a+1)\Gamma(2-a)}{2\pi}\,+\,O(\ell^{1-\epsilon})\notag\\
&=\,\frac{4}{\ell}\,+\,2\log\frac{a(1-a)}{2\sin\pi a}\,+\,O(\ell^{1-\epsilon}),\notag
\end{align}
and for $a=1$ has the expansion
\[
\sum_{n=1}^{\infty}S(n\ell)\,=\,\frac{2}{\ell}\,+\,\log\frac{\ell}{4\pi}\,+\,1\,+\,O(\ell^{1-\epsilon}).
\]
\end{corollary}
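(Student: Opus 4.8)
\emph{Proof proposal.} The plan is to deduce all three expansions directly from Theorem \ref{lasymp}, the only additional inputs being the evenness of $S$ and the classical recursion and reflection identities for the gamma function. First I would record two elementary facts: $S(t)=S(-t)$, since $S$ depends on $t$ only through $\cosh t$; and, for $0<a<1$ and $n\in\mathbb Z$, the estimate $|a+n|\ge\min(a,1-a)>0$, so every term of the bilateral sum is evaluated away from the singularity of $S$ at $t=0$. Together with the bound $S(t)=O(e^{-2t})$ for $t$ bounded away from $0$ noted above, this gives absolute convergence of $\sum_{n=-\infty}^{\infty}S((a+n)\ell)$.

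For the bilateral sum I would split the range into $n\ge 0$ and $n\le -1$. Reindexing the second piece by $m=-n-1\ge 0$ gives $a+n=-\bigl((1-a)+m\bigr)$, so evenness yields $\sum_{n\le -1}S((a+n)\ell)=\sum_{m=0}^{\infty}S\bigl(((1-a)+m)\ell\bigr)$. Now apply Theorem \ref{lasymp} once with parameter $a$ and once with parameter $1-a$ (both positive) and add. The two leading terms combine to $4/\ell$; the powers of $\ell$ inside the logarithms are $\ell^{2a-1}$ and $\ell^{2(1-a)-1}=\ell^{1-2a}$, whose product is $1$; the constant terms $2a-1$ and $2(1-a)-1$ cancel; and the two $O(\ell^{1-\epsilon})$ remainders combine to one $O(\ell^{1-\epsilon})$. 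What is left is
\[
\frac{4}{\ell}\,+\,\log\frac{\Gamma(a+1)^2\Gamma(2-a)^2}{2^{2a}\,2^{2(1-a)}\,\pi^2}\,+\,O(\ell^{1-\epsilon})\,=\,\frac{4}{\ell}\,+\,2\log\frac{\Gamma(a+1)\Gamma(2-a)}{2\pi}\,+\,O(\ell^{1-\epsilon}),
\]
which is the first displayed line. For the second line I would use $\Gamma(a+1)=a\Gamma(a)$ and $\Gamma(2-a)=(1-a)\Gamma(1-a)$ together with the reflection formula $\Gamma(a)\Gamma(1-a)=\pi/\sin\pi a$ to get $\Gamma(a+1)\Gamma(2-a)=a(1-a)\pi/\sin\pi a$, whence $\Gamma(a+1)\Gamma(2-a)/(2\pi)=a(1-a)/(2\sin\pi a)$.

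For $a=1$ the sum $\sum_{n=1}^{\infty}S(n\ell)$ is literally $\sum_{n=0}^{\infty}S((1+n)\ell)$, so Theorem \ref{lasymp} with $a=1$ applies verbatim; since $\Gamma(2)=1$ and $2a-1=1$, its right-hand side reads $\frac{2}{\ell}+\log\frac{\ell}{4\pi}+1+O(\ell^{1-\epsilon})$, which is the last display.

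There is essentially no obstacle here beyond bookkeeping; the only points meriting a word of care are that the reindexing in the bilateral case is legitimate precisely because $0<a<1$ keeps all arguments away from the singularity of $S$, and that it is the cancellation of the $\ell$-powers and of the additive constants that makes the final answer $\ell$-free up to the stated remainder. All of the genuine analytic work — Stirling's formula, the Trapezoid-Rule comparison of the middle block to an integral, and the exponential smallness of the tail — has already been done in Theorem \ref{lasymp}; here one only assembles two instances of it and applies the gamma identities.
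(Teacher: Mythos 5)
Your proposal is correct and follows essentially the same route as the paper: use the evenness of $S$ to rewrite the bilateral sum as the two one-sided sums with parameters $a$ and $1-a$, apply Theorem \ref{lasymp} to each (noting the cancellation of the $\ell^{2a-1}$, $\ell^{1-2a}$ factors and of the constants $2a-1$, $1-2a$), and finish with the recursion and reflection identities for the gamma function, the $a=1$ case being a direct application of the theorem.
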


\noindent\emph{Proof of Corollary.}  Since $S(t)$ is an even function the first sum can be rewritten as $\sum_{n=0}^{\infty}S((a+n)\ell)+S((1-a+n)\ell)$ and the theorem is applied.  The gamma function identities $\Gamma(z+1)=z\Gamma(z)$ and $\Gamma(1-z)\Gamma(z)\sin\pi z=\pi$ are applied to obtain the desired expression.  Finally the case $a=1$ is a direct application of the theorem.$\qedd$

\noindent\emph{Proof of Theorem.}  We begin with properties of the summand $S(t)$.  The summand has the small-$t$ expansion $S(t)=2\log2/t\,-\,2\,+\,O(t^2\log t)$ and the large-$t$ expansion $S(t)=O(e^{-2t})$.  We also consider the function
\[
F(t)\,=\,S(t)\,-\,2\log\frac{2}{t}
\]
and write
\[
F(t)\,=\,(\cosh t-1)\bigg(\log\frac{\cosh t+1}{\cosh t-1}\bigg)\ +\ \log\frac{t^2(\cosh t+1)}{4(\cosh t-1)}\ -\ 2.
\]
We note that for small-$t$, since $\cosh t-1$ is $O(t^2)$ and $t^2/(\cosh t-1)$ is analytic it follows that $F(t)$ has second derivative bounded by $-\log t$ for small-$t$.  

We are ready to begin the overall considerations and write the sum in the form of Riemann sums, adding in and subtracting out a $2\log 2/t$ contribution

\begin{align}\label{123}
\sum_{n=0}^{\infty}S((a+n)\ell)\,=\,&\sum_{n=0}^N2 \log\frac{2}{(a+n)\ell}\notag\\
&+\,\frac{1}{\ell}\sum_{n=0}^N\ell\big(S((a+n)\ell)-2\log\frac{2}{(a+n)\ell}\big)\notag\\&+\,\frac{1}{\ell}\sum_{n=N+1}^{\infty}\ell S((a+n)\ell)\notag\\
=\,&I\,+\,II\,+\,III.
\end{align}
We consider the right-hand sums in order.  For the first sum we have
\[
2\sum_{n=0}^N\log\frac{2}{(a+n)\ell}\,=\,2(N+1)\log\frac{2}{\ell}\,+\,2\sum_{n=0}^N\log\frac{1}{(a+n)}.
\]
The right hand sum is  $-2\log\prod_{n=0}^N(a+n)\,=\,-2\log \Gamma(a+N+1)/\Gamma(a+1)$.  We apply Stirling's formula $\log \Gamma(z)=\frac12\log2\pi/z\,+\,z(\log z\,-\,1)\,+\,O(1/z)$ to find that
\begin{multline*}
I\,=\,2(N+1)\log\frac{2}{\ell}\,+\,2\log\Gamma(a+1)\,-\,2(a+N+\frac12)\log(a+N+1)\\+\,2(a+N+1)\,-\,\log 2\pi\,+\,O(N^{-1})
\end{multline*}
and noting that $\log(a+N+1)=\log(a+N)+1/(a+N)+O(N^{-2})$ gives the desired final expansion
\begin{multline}\label{1}
I\,=\,2(N+1)\log\frac{2}{\ell}\,+\,2\log\Gamma(a+1)\,-\,2(a+N)\log(a+N)\\-\,\log(a+N+1)\,+\,2(a+N)\,-\,\log2\pi\,+\,O(N^{-1}).
\end{multline}

For the second sum of (\ref{123}) we use the Trapezoid Rule approximation for an integral.  The approximation involves weights $1/2$ for the first and last sum terms. The error bound is in terms of the second derivative of $F(t)$ on the interval $[a\ell,(a+N)\ell]$ and the square of the partition size.  The approximation gives the expansion
\begin{multline*}
II\,=\,\frac{1}{\ell}\int_{a\ell}^{(a+N)\ell}F(t)dt\,+\,\frac12\big(F(a\ell)\,+\,F((a+N)\ell)\big)\\+\,O(\ell\,|[a\ell,(a+N)\ell]|\max |F''|).
\end{multline*}
We set $(a+N)=\ell^{-\epsilon}$ and consider terms in order from right to left.  Given the small-$t$ logarithmic bound for $F''$ the remainder is bounded as $O(\ell^{1-2\epsilon})$.  Given the large-$t$ exponential decay $S(t)$ and the small-$t$ expansion of $S(t)$ then
\[
F((a+N)\ell)\,=\,-2\log \frac{2}{(a+N)\ell}\,+\,O(e^{-\ell^{-\epsilon}})\quad \mbox{and}\quad
F(a\ell)=-2+O(\ell^{2-\epsilon}).
\]
The next step is to include the contribution of sum $III$.  The sum is replaced with the corresponding integral.  Since the integrand is exponentially decreasing on the interval, the replacement remainder is exponentially small.  The considerations combine to give the expansion
\begin{multline*}
II\,+\,III\,=\,-\frac{2}{\ell}\int_{a\ell}^{(a+N)\ell}\log\frac{2}{t}\,dt\,+\,\frac{1}{\ell}\int_{a\ell}^{\infty}S(t)\,dt\\-\,1\,-\log\frac{2}{(a+N)\ell}\,+\,O(\ell^{1-2\epsilon}).
\end{multline*}
The first integrand has antiderivative $t\log 2/t\,+\,t$.  The second integrand $S(t)$ has antiderivative
\[
\sinh t\bigg(\log\frac{\cosh t+1}{\cosh t-1}\bigg),
\]
which has the large-$t$ expansion $2\,+\,O(e^{-2t})$.  We evaluate the integrals to find the contribution
\begin{multline*}
II\,+\,III\,=\,-2N\log 2\,+\,2(a+N)\log((a+N)\ell)\,-\,2(a+N)\\-2a\log a\ell\,+\,2a+\frac{2}{\ell}-2a\log\frac{2}{a\ell}\,-\,1\,-\log\frac{2}{(a+N)\ell}\,+\,O(\ell^{1-2\epsilon}).
\end{multline*}
The next step is to combine with expansion (\ref{1}) and note again that $(a+N)\ell=\ell^{-\epsilon}$ to find the desired final expansion
\[
I\,+\,II\,+\,III\,=\,\frac{2}{\ell}\,+\,\log\frac{\Gamma(a+1)^2\ell^{2a-1}}{2^{2a}\pi}\,+\,2a\,-\,1\,+\,O(\ell^{1-\epsilon}).
\]


\providecommand\WlName[1]{#1}\providecommand\WpName[1]{#1}\providecommand\Wl{W%
lf}\providecommand\Wp{Wlp}\def\cprime{$'$}

\end{document}